\numberwithin{equation}{section}
\newtheorem{theorem}[equation]{Theorem}
\newtheorem{thm}{Theorem}
\theoremstyle{plain}
\newtheorem{lemma}[equation]{Lemma}
\newtheorem{proposition}[equation]{Proposition}
\newtheorem{definition}[equation]{Definition}
\newtheorem{corollary}[equation]{Corollary}
\newtheorem*{corollary*}{Corollary}
\newtheorem{remark}[equation]{Remark}
\def\Aut{\mathrm{Aut}}
\def\GL{\mathrm{GL}}
\def\GSp{\mathrm{GSp}}
\def\GSpin{\mathrm{GSpin}}
\def\SO{\mathrm{SO}}
\def\Nilp{\mathrm{Nilp}}
\def\ANilp{\mathrm{ANilp}}
\def\det{\mathrm{det}}
\def\ord{\mathrm{ord}}
\def\inv{\mathrm{inv}}
\def\red{\mathrm{red}}
\def\Sets{\mathrm{Sets}}
\def\Ker{\mathrm{Ker}}
\def\ssp{\mathrm{ssp}}
\def\ss{\mathrm{ss}}
\def\der{\mathrm{der}}
\def\max{\mathrm{max}}
\def\BT{\mathrm{BT}}
\def\KS{\mathrm{KS}}
\def\cris{\mathrm{cris}}
\def\rig{\mathrm{rig}}
\def\dR{\mathrm{dR}}
\DeclareMathOperator{\Hom}{Hom}
\DeclareMathOperator{\End}{End}
\DeclareMathOperator{\Adm}{Adm}
\def\calM{\mathcal{M}}
\def\calO{\mathcal{O}}
\def\CC{\mathbb{C}}
\def\FF{\mathbb{F}}
\def\QQ{\mathbb{Q}}
\def\RR{\mathbb{R}}
\def\ZZ{\mathbb{Z}}
\newcommand{\Mass}{\mathrm{Mass}}
\newcommand{\Vol}{\mathrm{Vol}}
\newcommand{\Spf}{\mathrm{Spf}}
\newcommand{\Dieu}{Dieudonn\'{e}}
\newcommand{\Tr}{\mathrm{Tr}}
\newcommand{\Nm}{\mathrm{Nm}}
\newcommand{\Fil}{\mathrm{Fil}}
\newcommand{\Gspin}{\mathrm{GSpin}}
\newcommand{\Isom}{\mathrm{Isom}}
\author{Haining Wang}
\address{\parbox{\linewidth} { Department of Mathematics,\\ McGill University,\\ 805 Sherbrooke St W,\\ Montreal, QC H3A 0B9, Canada.~ }}
\email{wanghaining1121@outlook.com}
\subjclass[2000]{Primary 11G18, Secondary 20G25}
\date{\today}
\begin{document}
\title{On the superspecial loci of orthogonal type Shimura varieties}

\keywords{\emph{Shimura varieties, superspecial locus, affine Deligne-Lusztig varieties}}

\begin{abstract}
In this note, we study the superspecial loci of orthogonal type Shimura varieties of signature $(n-2, 2)$ with $n\geq 3$. We prove a conjecture of Gross on the parametrizations of the superspecial locus in the special fiber of an orthogonal type Shimura variety and its lift in the integral model by certain homogeneous spaces.  As applications, we provide a mass formula for the superspecial locus. We also indicate how Gross's conjectures can be generalized to the Coxeter type Shimura varieties using the group theoretic method of G\"{o}rtz and He.
\end{abstract}

\maketitle

\tableofcontents
\section{Introduction}
Let $p$ be an odd prime. An abelian variety $A$ over a field of characteristic $p$ is said to be \emph{superspecial} if it is isomorphic to a product of supersingular elliptic curves under the base change to an algebraically closed field. Consider $\mathscr{A}_{g,\FF_{p}}$ the moduli space over $\FF_{p}$ of principal polarized abelian varieties. Let $\mathscr{A}_{g,\ssp}$ be the subset of $\mathscr{A}_{g,\FF_{p}}(\overline{\FF}_{p})$ which consists of superspecial points. This is a finite and closed subset of  $\mathscr{A}_{g}\otimes \overline{\FF}_{p}$. The structure of $\mathscr{A}_{g, \ssp}$ is well studied \cite{KO-COM87, Yu-doc06}. In particular there is a uniformization of  $\mathscr{A}_{g, \ssp}$ by the double coset $G^{\prime}(\QQ)\backslash G^{\prime}(\mathbb{A}_{f})/ G^{\prime}(\widehat{\ZZ})$ where $G^{\prime}$ is an inner form of the symplectic similitude group $\GSp_{2g}$. Using this uniformization, one can give a quantitive result about the size of superspecial locus known as the mass formula see \cite{Eke87, Yu-doc06}. This is a generalization of the famous Deuring's mass formula for supersingular elliptic curves which says
$$\sum_{[E]}\frac{1}{|\Aut(E)|}=\frac{p-1}{24}.$$ 
The study of superspecial locus and its mass formula has been generalized to PEL type Shimura varieties in \cite{Yu11}.  In this note we provide a generalization of the above theme to the context of orthogonal type Shimura varieties. Being an abelian type Shimura variety, an orthogonal type Shimura variety does not admit an immediate moduli interpretation and therefore it is not clear how to define the superspecial locus. In this note, we simply define the superspecial locus to be the \emph{minimal Ekedahl-Oort stratum} in the Ekedahl-Oort stratification of the orthogonal type Shimura variety. The Ekedahl-Oort stratifications for abelian type Shimura varieties are studied in \cite{SZ17} and its local analogues for Rapoport-Zink spaces are studied in \cite{Shen-gen}. We will not use their theory explicitly but rather our starting point is the \emph{Bruhat-Tits stratification} of the $\GSpin$-type Rapoport-Zink space investigated in \cite{HP17}. Our definition of the superspecial locus for the $\GSpin$-type Rapoport-Zink is the union of Bruhat-Tits strata of minimal type. The minimal Ekedahl-Oort stratum and the minimal Bruhat-Tits stratum in fact agrees by general theory explained in \cite{GH15}. Then the superspecial locus for the orthogonal type Rapoport-Zink space is simply the image of the  superspecial locus for the $\GSpin$-type Rapoport-Zink space under the natural \'{e}tale covering map. We show this agrees with the naive way of defining the superspecial locus which is simply restricting the superspecial locus of the Siegel type Rapoport-Zink space to the $\GSpin$-type, then project down to the orthogonal type Rapoport-Zink space. The corresponding superspecial locus for the orthogonal type Shimura variety can be defined in an analogues way and linked to the Rapoport-Zink space via the uniformization theorem of Rapoport-Zink. 

\subsection{Gross' conjecture} We are led to study the superspecial locus of an orthogonal type Shimura variety by the manuscripts of Gross \cite{Gro-indefinite, Gro-ssp}. Let $(V, Q)$ be a quadratic space over $\QQ_{p}$ whose determinant $\det(V)$ is a unit and whose Hasse invariant $\epsilon(V)=1$. Let $G=\SO(L)$ be the group scheme over $\ZZ_{p}$ defined by a self-dual lattice $L$ of $V$. Let $\mu: \mathbb{G}_{m}\rightarrow G_{\overline{\QQ}_{p}}$ be a suitable miniscule cocharacter and $[b]\in B(G,\mu)$ be the unique basic element in the acceptable part of the Kottwitz set $B(G, \mu)$ defined by \cite[4.5, 4.6]{Rap05}. Given this datum one can attach a basic local Shimura variety $\breve{\mathcal{M}}=\breve{\mathcal{M}}(G, \mu, b)$ which we will refer to as the orthogonal type Rapoport-Zink space, see section $3.1$ for more precise statements. We will write $\breve{\mathcal{M}}_{\mathrm{red}}$ the underlying reduced scheme of $\breve{\mathcal{M}}$. We also need to consider the nearby quadratic space $V^{\prime}$ which is defined to be $V^{\prime}= V^{\Phi}_{K_{0}}$. Here $K_{0}$ is the fraction field of the Witt ring $W_{0}=W(\overline{\FF}_{p})$ and $\Phi$ is the operator given by $b\sigma$ where $\sigma$ is the Frobenius on $K_{0}$.  Then the quadratic space $(V^{\prime}, Q)$ has the same determinant as $V$ but is of Hasse-Witt invariant $\epsilon(V^{\prime})=-1$.  Let $J_{b}=\SO(V^{\prime})$ which acts naturally on $\breve{\mathcal{M}}$. Using the \emph{Bruhat-Tits stratification} for $\breve{\mathcal{M}}$ introduced in \cite{HP17} and \cite{Shen-gen}, we will introduce the \emph{superspecial locus} $\breve{\mathcal{M}}_{\mathrm{ssp}}$ of $\breve{\mathcal{M}}_{\mathrm{red}}$.  Let $K=\QQ_{p}(\sqrt{D})$ be the unramified quadratic extension with $D$ being a unit of $\ZZ_{p}$.  Gross considered the set of \emph{oriented planes} $W$ of discriminant $-D$ and Hasse-Witt invariant $\epsilon(W)=-1$ embedded in $V^{\prime}$. Notice that such an oriented plane $W$ determines a homomorphism $h: T=\SO(W)\rightarrow \SO(V^{\prime})$ and the conjugacy classes of such homomorphisms is parametrized by the $K$-analytic manifold $$X=\SO(V^{\prime})/ \SO(W)\times \SO(W^{\perp}).$$ This is of course the non-archimedean analogue of the conjugacy classes $h: \mathbb{S}\rightarrow G_{\RR}$ where $\mathbb{S}=\mathrm{Res}_{\CC/\RR}\mathbb{G}_{m}$ is the Deligne torus which form part of a Shimura datum. However $\SO(W^{\perp})$ is not always compact and Gross introduced the following modification. Let $W=K.e$ wth $\Nm(e)=p$, then $W$ has a canonical lattice $\calO_{K}.e$ where $\calO_{K}$ is the ring of integers in $K$ and $\mathcal{O}_{K}.e$ provides a natural integral structure of the plane $W$ via its chosen orientation. Let  $$Y= \SO(V^{\prime})/\SO(\calO_{K}.e)\times \SO(M)$$ where $M$ is a lattice in $W^{\perp}$ with stabilizer $\SO(M)$. Therefore $Y$ parametrizes those pairs $(W, M)$ for an oriented plane $W$ and a lattice $M$ in the orthogonal complement $W^{\perp}$. Then $Y$ is an $K$-analytic manifold whose $K$ structure is induced by the orientation on $W$. Since $\epsilon(V^{\prime})=-1$, we let $\Lambda$ be an almost self-dual lattice in $V^{\prime}$. Finally Gross introduced the space $$Z=\SO(V^{\prime})/\SO(\Lambda,\pm)$$ where $\SO(\Lambda,\pm)$ is the stabilizer of $\Lambda$ with a fixed orientation of the quadratic space $\Lambda/ \Lambda^{\vee}$ over $\FF_{p}$.  It is conjectured in \cite[Conjecture 1]{Gro-indefinite} that 
\begin{itemize}
\item $Z$ is the superspecial locus $\breve{\mathcal{M}}_{\mathrm{ssp}}$ of $\breve{\mathcal{M}}_{\mathrm{red}}$.
\item $Y$ is the set of $\mathcal{O}_{K}$-points in $\breve{\mathcal{M}}$ whose reductions lie in $\breve{\mathcal{M}}_{\mathrm{ssp}}$.
\item $X$ is related to the image of the de Rham period map $\pi_{\dR}: \breve{\mathcal{M}}^{\mathrm{rig}}\rightarrow \mathcal{Q}$ where $\mathcal{Q}$ is the flag variety associated to $G$. 
\end{itemize}

The main result of this note is the confirmation of these conjectures.  

\begin{thm}\label{main-RZ}
Let $Def_{\mathrm{ssp}}(\mathcal{O}_{K})$ be the set of points in $\breve{\mathcal{M}}(\mathcal{O}_{K})$ whose reductions lie in $\breve{\mathcal{M}}_{\mathrm{ssp}}$ and let $Def_{\ssp}^{\mathrm{rig}}(K)$ be the rigid analytic $K$-points of $Def_{\mathrm{ssp}}(\mathcal{O}_{K})$.
\begin{enumerate}
\item There is a bijection between $\breve{\calM}_{\mathrm{ssp}}$ and the homogeneous space
$Z$.
\item There is a bijection between $Def_{\mathrm{ssp}}(\mathcal{O}_{K})$ and the homogeneous space $Y$.

\item The image of $\pi_{\dR}$ restricted to $Def^{\mathrm{rig}}_{\mathrm{ssp}}(K)$ is precisely the homogeneous space $X$.
\end{enumerate}
\end{thm}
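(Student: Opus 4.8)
The plan is to derive all three statements from the Bruhat--Tits stratification of $\breve{\calM}_{\red}$ of \cite{HP17} (and its Rapoport--Zink analogue \cite{Shen-gen}), together with a deformation-theoretic analysis at a superspecial point. For part (1), recall that $\breve{\calM}_{\red}=\bigcup_{\Lambda}\breve{\calM}_{\Lambda}$, the union running over the vertex lattices $\Lambda\subset V'$, i.e.\ the $\ZZ_{p}$-lattices with $p\Lambda\subseteq\Lambda^{\vee}\subseteq\Lambda$, and that $\breve{\calM}_{\Lambda}$ is a Deligne--Lusztig variety of dimension $\tfrac{1}{2}(t(\Lambda)-2)$, where $t(\Lambda)=\dim_{\FF_{p}}(\Lambda/\Lambda^{\vee})$. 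By definition $\breve{\calM}_{\ssp}$ is the union of the minimal strata, those with $t(\Lambda)=2$, i.e.\ those attached to almost self-dual lattices; these are zero-dimensional, and unwinding the description in \cite{HP17} identifies the finite set $\breve{\calM}_{\ssp}$ with the set of pairs $(\Lambda,\mathfrak{o})$ consisting of an almost self-dual lattice $\Lambda\subset V'$ and an orientation $\mathfrak{o}$ of the $\FF_{p}$-quadratic plane $\Lambda/\Lambda^{\vee}$. It then remains to check that $J_{b}=\SO(V')$ acts transitively on this set --- all almost self-dual lattices are equivalent since $\epsilon(V')=-1$ and $p$ is odd, by the classification of quadratic $\ZZ_{p}$-lattices --- and that the stabilizer of $(\Lambda,\mathfrak{o})$ is precisely $\SO(\Lambda,\pm)$; this gives the bijection $\breve{\calM}_{\ssp}\cong Z$.

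For part (2), fix $x_{0}\in\breve{\calM}_{\ssp}$ corresponding to a pair $(\Lambda_{0},\mathfrak{o}_{0})$ as above, with associated $p$-divisible group with $\GSpin$-structure over $\overline{\FF}_{p}$. Using the crystalline description of $\breve{\calM}$ in \cite{HP17} --- Grothendieck--Messing theory, with the smooth quadric $\mathcal{Q}$ as local model and the ideal $p\mathcal{O}_{K}$ carrying divided powers since $p$ is odd --- lifting $x_{0}$ to $\mathcal{O}_{K}$ amounts to lifting the Hodge line to an isotropic $\mathcal{O}_{K}$-line inside the rank-$n$ quadratic $\mathcal{O}_{K}$-module attached to $\Lambda_{0}$ by the $\GSpin$-datum. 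Such a line pairs trivially against $\Lambda_{0}^{\vee}$ modulo $p$, and is therefore equivalent to the datum of an oriented $K$-plane $W=K\cdot e\subset V'$ with $\Nm(e)=p$ and $\Lambda_{0}\cap W=p^{-1}\mathcal{O}_{K}\cdot e$, together with the lattice $M=\Lambda_{0}^{\vee}\cap W^{\perp}$ in $W^{\perp}$; the orientation of $W$ is the one induced by $\mathfrak{o}_{0}$. Conversely every such pair $(W,M)$ occurs, and the construction is $\SO(V')$-equivariant and compatible with the reduction maps $Def_{\ssp}(\mathcal{O}_{K})\to\breve{\calM}_{\ssp}$ and $Y\to Z$; transitivity of $\SO(V')$ on such pairs, with stabilizer $\SO(\mathcal{O}_{K}\cdot e)\times\SO(M)$, then yields the bijection $Def_{\ssp}(\mathcal{O}_{K})\cong Y$. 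The same analysis shows that this definition of $\breve{\calM}_{\ssp}$, via minimal Bruhat--Tits strata, coincides with the one obtained by restriction from the Siegel Rapoport--Zink space.

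For part (3), a point of $Def^{\rig}_{\ssp}(K)$ corresponds under part (2) to a pair $(W,M)$, and its de Rham period is computed directly: since $\mathrm{disc}(W)=-D$ is a square in $K=\QQ_{p}(\sqrt{D})$, the plane $W\otimes_{\QQ_{p}}K$ is split, and the Hodge line of the corresponding deformation is the isotropic line $\ell_{W}\subset W\otimes_{\QQ_{p}}K\subset V'\otimes_{\QQ_{p}}K$ cut out by the orientation of $W$. Hence $\pi_{\dR}$ restricted to $Def^{\rig}_{\ssp}(K)$ factors through the forgetful map $(W,M)\mapsto W$, whose image is exactly the set of oriented planes $W\subset V'$ of discriminant $-D$ with $\epsilon(W)=-1$, i.e.\ the homogeneous space $X=\SO(V')/\SO(W)\times\SO(W^{\perp})$.

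I expect the main difficulty to lie in part (2): one must set up, with the correct normalizations, the precise dictionary between Grothendieck--Messing lifts of the $\GSpin$ $p$-divisible group and the lattice pairs $(W,M)$ --- in particular tracking both orientations and the $\mathcal{O}_{K}$-structures through the crystalline period isomorphism --- and separately verify that the minimal Bruhat--Tits stratum agrees with the naive superspecial locus inherited from the Siegel case. Granting this, part (1) follows formally from \cite{HP17} and the theory of quadratic $\ZZ_{p}$-lattices, and part (3) reduces to the explicit period computation above.
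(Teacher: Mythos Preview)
Your proposal is correct and follows essentially the same route as the paper: part (1) is the paper's Theorem~\ref{thmZ} (identification of $\breve{\calM}_{\ssp}$ with pairs $(\Lambda,\mathfrak{o})$ via the minimal Bruhat--Tits strata of \cite{HP17}, then transitivity of $\SO(V')$), part (2) is the paper's Lemma~\ref{Keylemma}/Theorem~\ref{Keythm}/Corollary~\ref{corY} (Grothendieck--Messing lifts as isotropic $\mathcal{O}_K$-lines in the special lattice, assembled with their $\Phi$-translate into an oriented plane $W\subset\Lambda$), and part (3) is the paper's Theorem~\ref{thmX} (the period map remembers only the oriented plane $W$, forgetting $M$). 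The one point you leave implicit that the paper makes explicit is the transitivity of $\SO(V')$ on the \emph{pairs} $(\Lambda,\mathfrak{o})$ in part (1): the paper swaps the two orientations by producing $g\in\SO(V')$ whose $\GSpin$-lift has spinor norm of odd $p$-valuation, and in part (2) the oriented plane is built concretely as $W_{\mathcal{O}_K}=F^{1}L+\Phi_{*}(F^{1}L)$, which is the step you flag as the ``precise dictionary'' still to be set up.
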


In the main body of this note, part $(1)$ is  proved in Theorem \ref{thmZ}, part $(2)$ is proved in Corollary \ref{corY} and part $(3)$ is proved in Theorem \ref{thmX}. We also remark that Gross made much more general conjectures about the superspecial loci of Shimura varieties. The method of this note can be applied to confirm his conjectures in the case of Coxeter type Shimura varieties \cite{GH15}. This is indicated in section $4$ of this note. In particular, one can prove Gross' conjecture in the case of unitary Shimura varieties of signature $(n-1, 1)$ by simply using the Bruhat-Tits stratification introduced by Vollaard and Wedhorn \cite{Vol-can10}, \cite{VW-invent11}. This is a simpler case as these unitary Shimura varieties are of PEL type and they do admit convenient moduli interpretations.

\subsection{Applications to Shimura varieties} We will now move to a global set-up and we will denote by $(V, Q)$ a quadratic space $\QQ$ of signature $(n-2, 2)$ with $n\geq 3$. Suppose that the $V$ has unit determinant and Hasse-Witt invariant $1$ at $p$. Let $U=U_{p}U^{p}$ with $U_{p}=G(\ZZ_{p})$ and $U^{p}$ sufficiently small. Similar to the local set-up, we denote by $V^{\prime}$ to be the nearby quadratic space whose signature is $(n, 0)$ and whose determinant is the same as $V$ at $p$ and Hasse-Witt invariant is $-1$.  Then we write $\mathscr{S}_{U, W_{0}}$ over $W_{0}$ the canonical integral model for the Shimura varietiy for the special orthogonal group $G$ \cite{Kisin-abe, MP16}. Let $\mathscr{S}_{\mathrm{ss}}$ and $\mathscr{S}_{\mathrm{ssp}}$ be the supersingular and superspecial locus of $\mathscr{S}_{U, \overline{\FF}_{p}}$. 
 The former one is defined as the basic locus of $\mathscr{S}_{U, \overline{\FF}_{p}}$ under the Newton stratification. The latter one is defined as the minimal Ekedahl-Oort stratum of $\mathscr{S}_{U, \overline{\FF}_{p}}$ constructed in \cite{SZ17}. The Rapoport-Zink uniformization theorem furnishes an isomorphism 
$$I(\QQ)\backslash \breve{\calM}\times G(\mathbb{A}^{p})/ K^{p}\xrightarrow{\sim} (\widehat{\mathscr{S}}_{U, W_{0}})/\mathscr{S}_{\mathrm{ss}}$$
where $I=\SO(V^{\prime})$ is the special orthogonal group defined by the quadratic space $V^{\prime}$ and $(\widehat{\mathscr{S}}_{U, W_{0}})/\mathscr{S}_{\mathrm{ss}}$ is the completion of  ${\mathscr{S}}_{U, W_{0}}$ along the supersingular locus $\mathscr{S}_{\mathrm{ss}}$.  
Then part $(1)$ and $(2)$ of Theorem \ref{main-RZ} imply the following double coset parametrization of the superspecial locus $\mathscr{S}_{\mathrm{ssp}}$ and its $\mathcal{O}_{K}$-lift  $\mathscr{S}^{\ssp}(\mathcal{O}_{K})$ in $(\widehat{\mathscr{S}}_{U, W_{0}})/\mathscr{S}_{\mathrm{ss}}(\mathcal{O}_{K})$. Let $U_{0,p}= \SO(\Lambda, \pm)$,  $U_{1, p}=\SO(\mathcal{O}_{K}.e)\times \SO(M)$. 
\begin{thm}\label{main-shi}
We have the following double cosets parametrizations.
\begin{enumerate}
\item There is a double coset description of the superspecial locus $\mathscr{S}_{\ssp}(\FF_{p^{2}})$ given by
$$ \mathscr{S}_{\ssp}(\FF_{p^{2}})\xrightarrow{\sim}I(\QQ)\backslash I(\mathbb{A}_{f}) /U_{0,p}U^{p}.$$

\item There is a double coset description of $\mathscr{S}^{\ssp}(\mathcal{O}_{K})$ given by
$$\mathscr{S}^{\ssp}(\mathcal{O}_{K})\xrightarrow{\sim}I(\QQ)\backslash I(\mathbb{A}_{f}) /U_{1,p}U^{p}.$$ 
\end{enumerate}
\end{thm}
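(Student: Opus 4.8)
The plan is to derive both statements by restricting the Rapoport--Zink uniformization isomorphism
$$\Theta\colon\; I(\QQ)\backslash \breve{\calM}\times G(\AAA^{p})/K^{p}\;\xrightarrow{\ \sim\ }\;(\widehat{\mathscr{S}}_{U,W_{0}})/\mathscr{S}_{\ss}$$
to the superspecial locus on each side and then rewriting the resulting double quotient adelically, feeding in the local parametrizations of Theorem \ref{main-RZ}.

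The first step is to identify, across $\Theta$, the globally defined superspecial locus $\mathscr{S}_{\ssp}\subset\mathscr{S}_{U,\overline{\FF}_{p}}$ (the minimal Ekedahl--Oort stratum) with the image of $\breve{\calM}_{\ssp}\times G(\AAA^{p})/K^{p}$. Since $\mathscr{S}_{\ssp}$ is contained in $\mathscr{S}_{\ss}$ it makes sense inside the formal completion on the right; and since the minimal Ekedahl--Oort stratum coincides with the minimal Bruhat--Tits stratum (as recalled in the introduction via \cite{GH15}), while the Bruhat--Tits stratification of $\breve{\calM}$ (constructed in \cite{HP17}, \cite{Shen-gen}) is by construction stable under the $J_{b}$-action that defines $\Theta$, this reduces to the compatibility of the Ekedahl--Oort stratification with the uniformization morphism, a statement local in nature for which I would appeal to \cite{SZ17} and \cite{Shen-gen}. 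Granting it, $\Theta$ restricts to a bijection
$$\mathscr{S}_{\ssp}\;\xrightarrow{\ \sim\ }\;I(\QQ)\backslash \breve{\calM}_{\ssp}\times G(\AAA^{p})/K^{p},$$
and, taking $\calO_{K}$-valued points (using that $\Theta$ is an isomorphism of formal schemes) and imposing that the reduction lies in $\mathscr{S}_{\ssp}$,
$$\mathscr{S}^{\ssp}(\calO_{K})\;\xrightarrow{\ \sim\ }\;I(\QQ)\backslash Def_{\ssp}(\calO_{K})\times G(\AAA^{p})/K^{p}.$$

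The second step is to insert Theorem \ref{main-RZ}. The bijections $\breve{\calM}_{\ssp}\xrightarrow{\sim}Z=\SO(V^{\prime})/\SO(\Lambda,\pm)$ and $Def_{\ssp}(\calO_{K})\xrightarrow{\sim}Y=\SO(V^{\prime})/\SO(\calO_{K}.e)\times\SO(M)$ are equivariant for $J_{b}=\SO(V^{\prime})=I(\QQ_{p})$, acting by left translation on the targets; this is automatic since both homogeneous spaces are orbits through stabilizers of lattice data on which $J_{b}$ acts, so only a remark is needed. Substituting, and using $I(\QQ_{p})=J_{b}$, $I(\AAA^{p}_{f})=G(\AAA^{p})$ (the inner form $I$ agrees with $G$ away from $p$, as $V^{\prime}$ and $V$ differ only at $p$ and $\infty$), $U_{0,p}=\SO(\Lambda,\pm)$ and $U_{1,p}=\SO(\calO_{K}.e)\times\SO(M)$, the right-hand sides become
$$I(\QQ)\backslash\bigl(I(\QQ_{p})/U_{0,p}\bigr)\times\bigl(I(\AAA^{p}_{f})/U^{p}\bigr)\;=\;I(\QQ)\backslash I(\AAA_{f})/U_{0,p}U^{p}$$
and the analogous identity with $U_{1,p}$ in place of $U_{0,p}$, the last equality being the usual unfolding of a double coset. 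For part $(1)$ it remains to observe that every superspecial point is defined over $\FF_{p^{2}}$ --- classical in the Siegel case and inherited in the $\GSpin$ case through the Kuga--Satake embedding, and in any case visible from the $\FF_{p^{2}}$-rationality of the minimal Bruhat--Tits strata --- so that $\mathscr{S}_{\ssp}(\FF_{p^{2}})=\mathscr{S}_{\ssp}$ and the composite is the asserted bijection; both double cosets are finite because $I(\RR)=\SO(V^{\prime})(\RR)$ is compact.

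I expect the delicate point to be the first step: the functoriality along $\Theta$ of the Ekedahl--Oort (equivalently Bruhat--Tits) stratification, matching the minimal stratum of the global Shimura variety with $\breve{\calM}_{\ssp}$. Once that is in place, the rest is a formal manipulation of homogeneous spaces and double cosets together with the $J_{b}$-equivariance furnished by Theorem \ref{main-RZ} and the finiteness just noted.
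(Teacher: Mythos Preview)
Your proposal is correct and follows essentially the same route as the paper: restrict the Rapoport--Zink uniformization (Corollary~\ref{RZ-orth}) to the superspecial locus, plug in the local identifications $\breve{\calM}_{\ssp}\simeq Z$ and $Def_{\ssp}(\calO_{K})\simeq Y$ from Theorem~\ref{main-RZ}, and then rewrite adelically using $I(\mathbb{A}^{p}_{f})\cong G(\mathbb{A}^{p}_{f})$ and $I(\QQ_{p})=J_{b}$. The only difference is one of emphasis: the paper treats the compatibility you flag as the ``delicate point'' (that $\Theta$ carries $\breve{\calM}_{\ssp}$ onto $\mathscr{S}_{\ssp}$) as immediate from the uniformization isomorphism together with the definition of $\mathscr{S}_{\ssp}$ as the image of $\mathscr{S}^{\Diamond}_{\ssp}$, rather than invoking the general EO machinery of \cite{SZ17}.
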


In the main body of this note, part $(1)$ is proved in Theorem \ref{ssp-shim} and part $(2)$ is proved in Theorem \ref{ssp-shim-lift}.

As an application of the double coset parametrization, we provide a geometric mass formula similar to the simple mass formula for PEL type Shimura varieties given in \cite{Yu11}.  Let $x\in \mathscr{S}_{\mathrm{ssp}}$ be a superspecial point and let $g_{x}\in I(\QQ)\backslash I(\mathbb{A}_{f}) /U_{0,p}U^{p}$ be the corresponding class and we choose a representative in $I(\mathbb{A}_{f}) $ denoted by the same symbol. We define the finite group $\Gamma_{x}$ by the following formula
$$g_{x}Ug^{-1}_{x}\cap I(\QQ)= \Gamma_{x}.$$
We define the arithmetic mass for the superspecial locus $\mathscr{S}_{\ssp}$ using the formula
$$\Mass^{a}_{\mathscr{S}_{\ssp}}=\sum_{x\in \mathscr{S}_{\ssp}}\frac{1}{|\Gamma_{x} |}.$$
We also define the geometric mass  of $\mathscr{S}_{\ssp}$ to be $$\Mass^{g}_{\mathscr{S}_{\ssp}}=\sum_{x\in \mathscr{S}_{\ssp}}\frac{1}{|\Isom_{\mathbf{V}}(x)|}$$
where the definition of $\Isom_{\mathbf{V}}(x)$ is geometric in nature and can be found in section \ref{mass-for} and we only mention that this group is essentially the automorphism group of $x$ preserving Hodge cycles on the abelian varitiety given by $x$.

\begin{thm}Let $\Vol(U)=[I(\widehat{\ZZ}): U]$. We have
\begin{enumerate}
\item $\Mass^{a}_{\mathscr{S}_{\ssp}}=\Mass^{g}_{\mathscr{S}_{\ssp}}$.
\item $\Mass^{a}_{\mathscr{S}_{\ssp}}= \begin{cases}
\Vol(U)\prod^{m}_{r=1}\zeta(1-2r)\frac{1}{2^{m-1}}\frac{p^{2m}-1}{2(p+1)} &\text{if  $n=2m+1$};\\
\Vol(U)\prod^{m}_{r=1}\zeta(1-2r)L(1-m,\chi)\frac{1}{2^{m-1}}\frac{(p^{m-1}+1)(p^{m+1})}{2(p+1)}  &\text{if $n=2m$}.
\end{cases}
$
\end{enumerate}
\end{thm}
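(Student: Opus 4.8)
The plan is to combine the double coset parametrization of Theorem \ref{main-shi}(1) with the standard machinery relating arithmetic masses to adelic volumes and Galois cohomology. First I would establish part (1), the equality $\Mass^{a}_{\mathscr{S}_{\ssp}}=\Mass^{g}_{\mathscr{S}_{\ssp}}$. By the definition of $\Isom_{\mathbf{V}}(x)$ and the uniformization, both sides are sums over the same finite index set $\mathscr{S}_{\ssp}(\FF_{p^{2}})\cong I(\QQ)\backslash I(\AAA_{f})/U_{0,p}U^{p}$, so it suffices to show that for each $x$ the group $\Isom_{\mathbf{V}}(x)$ and the group $\Gamma_{x}=g_{x}Ug_{x}^{-1}\cap I(\QQ)$ agree. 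The automorphisms of $x$ preserving Hodge cycles, by the crystalline and \'etale comparison built into the construction of $I$ via the nearby quadratic space $V^{\prime}$, are precisely the $\QQ$-points of $I$ that stabilize the relevant lattice at $p$ (namely $\SO(\Lambda,\pm)=U_{0,p}$) and the chosen level $U^{p}$ away from $p$; conjugating by the representative $g_{x}$ identifies this with $\Gamma_{x}$. This is the PEL-type argument of Yu \cite{Yu11} transported to the $\GSpin$/orthogonal setting, where Hodge cycles replace polarization and endomorphism data.

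For part (2), the mass $\Mass^{a}_{\mathscr{S}_{\ssp}}=\sum_{x}\frac{1}{|\Gamma_{x}|}$ is, by the orbit-counting identity over the double coset space $I(\QQ)\backslash I(\AAA_{f})/U_{0,p}U^{p}$, equal to the Tamagawa-type volume $\Vol\bigl(I(\QQ)\backslash I(\AAA_{f})/U_{0,p}U^{p}\bigr)$ computed against a suitable Haar measure. I would factor this volume as $\tau(I)^{-1}$ (or the appropriate Tamagawa number, which for $\SO$ in $n\geq 3$ variables is $2$) times the product of local volumes $\prod_{v}\Vol(U_{0,v})^{-1}$ relative to canonical measures. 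At all finite places $v\neq p$ the local factor is $\Vol(U_{v})=\Vol(G(\ZZ_{v})\cap U^{v})$, and assembling these together with the archimedean factor (where $I_{\infty}=\SO(n)$ is compact, since $V'$ is positive definite) produces the global $\zeta$- and $L$-value contributions via the Siegel--Weil / Minkowski--Siegel mass formula for the genus of the lattice in $V'$. The ratio $\Vol(U)=[I(\widehat{\ZZ}):U]$ absorbs the deviation of $U^{p}$ from hyperspecial level, so the only place-specific work is at $p$: one must compute $\Vol(G(\ZZ_p))/\Vol(U_{0,p})=[SO(\Lambda):\SO(\Lambda,\pm)]^{-1}\cdot(\text{index of an almost self-dual stabilizer in the hyperspecial one})$. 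This local index at $p$ is where the factors $\frac{1}{2^{m-1}}\frac{p^{2m}-1}{2(p+1)}$ (for $n=2m+1$) and $\frac{1}{2^{m-1}}\frac{(p^{m-1}+1)(p^m+1)}{2(p+1)}$ (for $n=2m$) come from: they are precisely the ratios of the orders of the relevant orthogonal groups over $\FF_p$ — $|\SO_{2m+1}(\FF_p)|$ versus the stabilizer of an isotropic-type almost self-dual lattice, and the analogous split/non-split computation in the even case — corrected by the orientation index $2$ in $\SO(\Lambda,\pm)$ and a factor $2$ from the component group / spinor norm.

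The main obstacle I expect is the precise bookkeeping of these local indices at $p$ together with the global normalization constants. Concretely: pinning down the structure of the almost self-dual lattice $\Lambda$ and its dual, identifying $\SO(\Lambda,\pm)$ as a concrete parahoric-type subgroup, and counting $[\SO(\Lambda,\pm)\backslash \SO(\Lambda)] $ and $[\SO(\Lambda):G(\ZZ_p)\cap(\cdots)]$ in terms of reductive group orders over $\FF_p$; then matching the product $\prod_{r=1}^m \zeta(1-2r)$ (resp. times $L(1-m,\chi)$ with $\chi$ the quadratic character cutting out the discriminant field) against the Minkowski--Siegel mass of the $n$-variable positive definite genus determined by $V'$. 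I would handle this by invoking the known mass formula for quadratic forms (e.g. the tables in Conway--Sloane / the formulas used in \cite{Yu11}) and then dividing by the hyperspecial mass to isolate the $p$-correction, carefully tracking the factor-of-two ambiguities coming from $\SO$ versus $\mathrm{O}$, from the spinor norm, and from the orientation data built into $\SO(\Lambda,\pm)$ and $U_{0,p}$. The even case $n=2m$ additionally requires distinguishing whether the quadratic space $\Lambda/\Lambda^{\vee}$ over $\FF_p$ is split or non-split, which is governed by the discriminant $D$ and explains the appearance of $L(1-m,\chi)$ rather than a further zeta value.
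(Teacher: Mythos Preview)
Your proposal is essentially correct and follows the same overall shape as the paper, but it is more laborious in part (2) and slightly less precise about the key external input in part (1).

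For part (1), the paper does exactly what you outline: it reduces to showing $\Gamma_{x}\cong \Isom_{\mathbf{V}}(x)$ termwise. The point you describe as ``crystalline and \'etale comparison built into the construction of $I$'' is made precise in the paper by invoking \cite[Theorem~6.4]{MP15}, which guarantees that for a supersingular point the image of $\Isom_{\mathbf{V}}(\underline{A}_{\tilde{x}},\underline{A}_{\tilde{x}})$ in $\GSpin(\mathbf{V}_{\cris,\tilde{x},\QQ})$ really does fill out $\GSpin(V'_{p})$; the identification at the remaining places is immediate, and the conclusion is drawn from the Hasse principle. Your sketch does not name this input, and without it the crystalline side of the argument is where a gap would appear.

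For part (2), the paper bypasses your Tamagawa-number/Minkowski--Siegel computation entirely and simply observes that $\Mass^{a}_{\mathscr{S}_{\ssp}}=\Vol(U)\cdot \Mass(\Lambda)$ for a suitable maximal lattice $\Lambda\subset V'$, then quotes the explicit values of $\Mass(\Lambda)$ from Gan--Hanke--Yu's exact mass formula \cite[Propositions~7.4, 7.5]{GHY01}, specialized to the case of unit determinant and Hasse--Witt invariant $-1$ at $p$. Your approach (local volume factorization, parahoric index computation at $p$, matching with classical mass formulas) is exactly how \cite{GHY01} proves those propositions, so you would be rederiving a result that can simply be cited. The upside of your route is that it makes transparent where each factor---the $\zeta$-values, the $L$-value, the $p$-local correction, the orientation index $2$---comes from; the paper's route is a one-line citation.
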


\subsection{Notations}Let $p$ be an odd prime and let $\FF$ be an algebraically closed field containing $\FF_{p}$. Let $W_{0}=W(\FF)$ be the Witt ring of $\FF$ and $K_{0}$ be its fraction field. If $M_{1}\subset M_{2}$ are two $W_{0}$-modules, we write $M_{1}\subset^{d} M_{2}$ if the colength of the inclusion is $d$. If $R$ is ring and $L$ is an $R$-module and $R^{\prime}$ is an $R$-algebra, we use the notation $L_{R^{\prime}}=L\otimes_{R} R^{\prime}$.  Let $G$ be a reductive group over $\QQ_{p}$, we denote by $B(G)$ the set of $\sigma$-conjugacy classes in $G(K_{0})$ following Kottwitz \cite{Ko85}.

\subsection{Acknowlegement}The author would like to thank Henri Darmon and Pengfei Guan for supporting his postdoctoral studies. The author would like to thank Benedict Gross for drawing his attention to the conjectures on superspecial locus.

\section{Preliminaries on quadratic spaces}
We begin by reviewing some standard notions in the theory of quadratic spaces and lattices in them. Then we introduce some homogeneous spaces that are conjectured by Gross to be the parametrizing spaces for the superspecial loci of the orthogonal type Shimura varieties and Rapoprot-Zink spaces considered in this note. Everything here can be found in \cite{Gro-indefinite}.

\subsection{Orthogonal spaces}
Let $(V, Q)$ be a quadratic space of dimension $n\geq 3$ over $F$ where $F$ is a general field of characteristic not equal to $2$. Then we can define a symmetric form 
$$[\hspace{1mm},\hspace{1mm}]: V\times V \rightarrow F$$ 
by the formula $[x, y]= Q(x+y)-Q(x)-Q(y)$ which we assume is non-degenerate.  We can choose a basis $\{e_{i}\}^{n}_{i=1}$ such that $Q(\sum^{n}_{i=1}x_{i}e_{i})= a_{1}x^{2}_{1}+a_{2}x^{2}_{2}+\cdots a_{n}x^{2}_{n}.$ We call the product $\det(V)=a_{1}a_{2}\cdots a_{n}$ the \emph{determinant} of $(V, Q)$ and this is an invariant well-defined in $F^{\times}/ F^{\times  2}$. When $\det(V)\in \mathcal{O}^{\times}_{F}F^{\times 2}/ F^{\times 2}$, we say $V$ has unit determinant. Let $a_{i}, a_{j}$ be two classes in $\mathrm{H}^{1}(F, \mu_{2})=F^{\times}/ F^{\times 2}$ and $(a_{i}, a_{j})\in \mathrm{H}^{2}(F, \mu_{2})$ be their cup product in $\mathrm{H}^{2}(F, \mu_{2})=Br_{2}(F)$ where $Br_{2}(F)$ is the two torsion subgroup of the Brauer group of $F$. Then the \emph{Hasse-Witt invariant} of $V$ is defined as $\epsilon(V)=\sum_{i< j}(a_{i}, a_{j})\in Br_{2}(F)$.  Suppose that $(V, Q)$ is a two dimensional quadratic space then an \emph{orientation} of $(V, Q)$ is an embedding $\iota: K\rightarrow \End(V)$ such that $Q(\iota(\alpha)x)=\Nm(\alpha)Q(x)$ for the algebra $K=F[t]/(t^{2}+\det(V))$. The following lemma is well known. 

\begin{lemma}\label{Orientation}
An orientation for $(V, Q)$ is equivalent to a choice of an isotropic line in $(V_{K}, Q)$. 
\end{lemma}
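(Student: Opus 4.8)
The claim is a standard equivalence between orientations on a two-dimensional quadratic space $(V,Q)$ over $F$ and isotropic lines in the base change $(V_K,Q)$, where $K = F[t]/(t^2 + \det(V))$. The plan is to produce maps in both directions and check they are mutually inverse.

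First I would treat the case where $K$ is a field (i.e. $-\det(V)$ is not a square in $F$), and observe at the end that the split case $K \cong F \times F$ is handled identically with minor bookkeeping. Given an orientation $\iota: K \to \End(V)$, the element $j := \iota(t)$ satisfies $j^2 = -\det(V)$ on $V$, and the compatibility $Q(\iota(\alpha)x) = \Nm(\alpha)Q(x)$ forces $[jx, jx] = \det(V)\,[x,x]$ and, by polarizing, $[jx, y] = -[x, jy]$ (i.e. $j$ is skew self-adjoint up to the norm form). Extending scalars to $K$, the operator $j$ acquires eigenvalues $\pm\sqrt{-\det(V)}$ on $V_K$ — here I use that $\sqrt{-\det(V)} \in K$ by construction of $K$ — and each eigenspace is a line $\ell^{\pm} \subset V_K$. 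The skew-adjointness relation shows each eigenline is isotropic: for $x$ in the $\lambda$-eigenspace, $\lambda[x,x] = [jx,x] = -[x,jx] = -\lambda[x,x]$, so $[x,x] = 0$ since $\lambda \neq 0$ and $\mathrm{char}\,F \neq 2$. Choosing, say, $\ell^{+}$ gives the desired isotropic line; this is where the orientation (a choice of embedding rather than just the subalgebra) is used — swapping $\iota$ with $\iota \circ (\text{conjugation on } K)$ swaps $\ell^{+}$ and $\ell^{-}$.

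Conversely, given an isotropic line $\ell \subset V_K$, since $(V,Q)$ is nondegenerate two-dimensional and $\ell$ is isotropic, its $K$-conjugate $\bar\ell$ (the image of $\ell$ under $1 \otimes \mathrm{Gal}(K/F)$, or the "other" isotropic line when $K$ splits) is a second isotropic line with $V_K = \ell \oplus \bar\ell$, because a two-dimensional nondegenerate quadratic space containing an isotropic vector is a hyperbolic plane. Define $j \in \End_F(V)$ to be $\sqrt{-\det(V)}$ on $\ell$ and $-\sqrt{-\det(V)}$ on $\bar\ell$; one checks $j$ is $\mathrm{Gal}(K/F)$-equivariant hence descends to $V$, that $j^2 = -\det(V)$, and that the multiplicativity $Q(jx + (\text{scalars}))$ holds because $\ell, \bar\ell$ are isotropic and dual to each other under $[\,,]$. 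This produces $\iota: K \to \End(V)$ sending $t \mapsto j$.

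The two constructions are visibly inverse to one another: starting from $\iota$, forming $\ell^{+}$, and rebuilding $j$ recovers $\iota(t)$ by uniqueness of the eigenspace decomposition. I expect the main technical nuisance — not a deep obstacle — to be bookkeeping the two cases ($K$ a field versus $K \cong F \times F$) uniformly, and making precise the sense in which the "orientation" records more than the unoriented embedding, namely the labeling of the two eigenlines; this is exactly the $2$-to-$1$ versus $1$-to-$1$ subtlety that the word \emph{oriented} is there to resolve. Since the statement is classical, I would keep the proof brief, citing \cite{Gro-indefinite} for any details omitted.
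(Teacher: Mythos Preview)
The paper does not actually prove this lemma; it merely states it as ``well known'' immediately before the statement, with no proof environment following. Your proposal therefore supplies an argument where the paper gives none.

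Your argument is correct. One minor comment on the phrasing: polarizing $[jx,jx]=\det(V)\,[x,x]$ gives the similitude relation $[jx,jy]=\det(V)\,[x,y]$ rather than skew-adjointness directly; you then need $j^{2}=-\det(V)$ to convert this into $[jx,y]=-[x,jy]$. Alternatively (and slightly more cleanly), applying the compatibility $Q(\iota(\alpha)x)=\Nm(\alpha)Q(x)$ with $\alpha=1+t$ yields $[x,jx]=0$ for all $x$, and polarizing that identity gives skew-adjointness in one step. Either route works. The observation that $\bar{\ell}\neq\ell$ in the field case (because $V$ is anisotropic over $F$ exactly when $K$ is a field) is implicit in your appeal to the hyperbolic-plane structure and is worth making explicit, but it is not a gap.
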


Suppose now $F=F_{v}$ is a local field. Then we distinguish the following situations
\begin{itemize}
\item $F_{v}=\mathbb{C}$, then $(V, Q)$ is determined by its dimension;
\item $F_{v}=\mathbb{R}$, then $(V, Q)$ is determined by its signature;
\item $F_{v}$ is non-archimedean, then $(V, Q)$ is determined by its dimension, determinant and its Hasse-Witt invariant.
\end{itemize}

\subsection{Orthogonal lattices}
We assume now $F$ is a local non-archimedean field with ring of integers $\calO_{F}$ and uniformizer $\pi_{F}$. The residue field will be denoted as $k$. 
As usual we assume that the residue characteristic is not $2$.  An orthogonal lattice $L\subset V$ is a free $\calO_{F}$-module that spans $V$, we denote by $$L^{\vee}=\{x\in V: [x, L]\in \calO_{F}\}$$
the integral dual of $L$.  We say $L$ is \emph{self-dual} if $L=L^{\vee}$ and for $L$ to be self-dual it is necessary that $\det(V)\in \calO^{\times}_{F}F^{\times 2}/ F^{\times2 }$ is a unit and the Hasse-Witt invaraint is $1$. We say $L$ is \emph{almost-self-dual} if $L/L^{\vee}$ is a two dimensional quadratic space over the residue field $\calO_{F}/\pi_{F}$, which is isomorphic as an orthogonal space to the quadratic extension of $\calO_{F}/\pi_{F}$ with its norm form. The following theorem summarizes when a self-dual or an almost self-dual lattice exists in a quadratic space.

\begin{thm}\label{lattice}
Let $(V, Q)$ be a quadratic space over $F$ of dimension $n$ whose determinant is a unit.
\begin{itemize}
\item If $\epsilon(V)=1$, then $V$ contains a self-dual lattice $L$. There is a transitive action of the special orthogonal group $\SO(V)(F)$ on these lattices.
\item If $\epsilon(V)=-1$, then $V$ contains an almost-self-dual lattice $L$. There is a transitive action of the special orthogonal group $\SO(V)(F)$ on these lattices.
\end{itemize}
\end{thm}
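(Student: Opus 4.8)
The plan is to reduce both statements to the classification of quadratic spaces over a non-archimedean local field (recalled just above), and then to verify existence of the claimed lattice by an explicit construction on a standard normal form of $(V,Q)$, finally invoking Witt's theorem to obtain transitivity. First I would recall the basic structure theory: since $p$ is odd and $\det(V)$ is a unit, the even unimodular rank-$2$ piece (the hyperbolic plane $H$) and the ``norm form'' rank-$2$ piece $N_K$ attached to the unramified quadratic extension $K$ are the two building blocks to keep track of, since $H$ is self-dual and $N_K$ is self-dual as well; the distinction between $\epsilon(V)=\pm 1$ is detected by whether an orthogonal sum of copies of these, together with a rank-$\leq 2$ correction term, can be arranged to be unimodular or only ``almost'' unimodular (the anisotropic kernel is either $0$ or $1$-dimensional when $\epsilon=1$, and $2$-dimensional ``ramified'' only when $\epsilon=-1$, given the unit-determinant hypothesis).

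For the case $\epsilon(V)=1$: I would write $V$ as an orthogonal sum of hyperbolic planes and an anisotropic kernel of dimension $\leq 2$; the unit-determinant and $\epsilon=+1$ hypotheses force the anisotropic kernel to be either $0$-dimensional (so $V$ is split) or $1$-dimensional spanned by a vector of unit norm (the rank-$2$ anisotropic unit-determinant forms all have $\epsilon=-1$, so they are excluded). In each subcase one exhibits an explicit self-dual lattice: $\calO_F e \oplus \calO_F f$ on each hyperbolic plane with $[e,f]=1$, plus $\calO_F v$ with $Q(v)\in\calO_F^\times$ on the anisotropic line. For $\epsilon(V)=-1$: the same Witt decomposition now forces the anisotropic kernel to be $2$-dimensional; with unit determinant this kernel must be the norm form of the unramified quadratic extension twisted by the uniformizer, i.e. $K\cdot e$ with $\Nm(e)=\pi_F$, whose natural lattice $\calO_K\cdot e$ has discriminant valuation $1$ and whose reduction realizes the residue-field quadratic extension with its norm form — exactly the ``almost self-dual'' condition. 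Taking the orthogonal sum of this lattice with the self-dual lattices on the hyperbolic planes yields an almost-self-dual $L$ with $L/L^\vee$ of the required shape.

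For transitivity in both cases, I would argue as follows. Given two self-dual (resp. almost-self-dual) lattices $L_1,L_2$, pass to the reductions: in the self-dual case $L_i\otimes k$ is a nondegenerate quadratic space over $k$, and two self-dual lattices have isometric reductions by the classification over the finite field $k$; lift an isometry of the reductions using smoothness of the orthogonal group scheme and Hensel's lemma, then correct by Witt's extension theorem over $\calO_F$ to an element of $\SO(V)(F)$ carrying $L_1$ to $L_2$. In the almost-self-dual case one runs the same argument on the pair $(L_i, L_i^\vee)$, using that the quotient $L_i/L_i^\vee$ is, by definition, the fixed two-dimensional norm-form space, so the reductions are again isometric and lift; the determinant-one condition is arranged by composing with a reflection in a unit-norm vector if necessary.

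The main obstacle I anticipate is the transitivity assertion rather than existence: one must be careful that ``$\SO$'' (not the full orthogonal group $\mathrm{O}$) acts transitively, which requires checking that the stabilizer of a given lattice surjects appropriately or that one can always adjust an $\mathrm{O}(V)(F)$-element by a determinant-$(-1)$ element fixing the lattice — this is automatic when the lattice contains a vector of unit norm (true in all our cases since $p$ is odd), as reflection in such a vector lies in $\mathrm{O}(L)(F)$. A secondary subtlety is pinning down precisely which rank-$2$ anisotropic form occurs in the $\epsilon=-1$ case and matching its canonical lattice's reduction with the asserted norm form of $k$; this is a short computation with the Hilbert symbol over $F$ but deserves care since it is exactly the point that makes the orientation in Gross's $Y$ and $Z$ well-defined.
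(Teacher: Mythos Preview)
The paper gives no proof: it records that the statement is well-known and defers to \cite[Theorem~6.1]{Gro-indefinite}. So there is no approach to compare against; you are supplying an argument the paper omits.

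Your outline is workable, but the case analysis via the Witt decomposition has a gap. The anisotropic kernel $V_0$ over a $p$-adic field can have dimension up to $4$, and the hypotheses do not force $\dim V_0\le 1$ when $\epsilon(V)=+1$: the norm form $N_K=\langle 1,-D\rangle$ of the unramified quadratic extension is anisotropic with unit determinant and $\epsilon=(1,-D)=+1$, so $V=H^{m}\oplus N_K$ has $\epsilon=+1$ and a two-dimensional kernel. (Since you already note that $\calO_K\subset N_K$ is self-dual, this extra case is harmless once included.) Similarly, for $\epsilon(V)=-1$ the kernel can have dimension $3$ or $4$, not only $2$, so your construction on the $2$-dimensional piece does not cover all cases as written. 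A cleaner route that sidesteps these subtleties is to diagonalize $V\cong\langle a_1,\dots,a_n\rangle$ with each $a_i$ a unit or a uniformizer; since the Hilbert symbol of two units equals $+1$ for $p$ odd, one can absorb any pair of uniformizer entries with trivial symbol into a pair of unit entries, reducing to exactly $0$ uniformizer entries when $\epsilon=+1$ and exactly $2$ when $\epsilon=-1$. In the latter case the condition $(\pi u_1,\pi u_2)=-1$ is equivalent to $-u_1u_2$ being a nonsquare unit, which forces the reduction of the rank-$2$ block to be the anisotropic plane over $k$, and the desired lattices are then read off directly.

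Your transitivity argument is fine in substance: two self-dual (resp.\ almost-self-dual) $\calO_F$-lattices in $V$ are abstractly isometric over $\calO_F$ (same Jordan type), any such isometry extends $F$-linearly to an element of $\mathrm{O}(V)(F)$, and the reflection-in-a-unit-norm-vector trick correctly handles the passage from $\mathrm{O}$ to $\SO$.
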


\begin{proof}
This is well-known and see \cite[Theorem 6.1]{Gro-indefinite} for a complete proof.
\end{proof}

The stabilizer $H(L)$ of such a lattice is a parahoric subgroups of $\SO(V)(F)$:
\begin{itemize}
\item if $\epsilon=1$, then $H(L)$ is a hyperspecial subgroup of $\SO(V)(F)$;
\item if $\epsilon=-1$, then $H(L)$ is a parahoric subgroup of the non-quasi-split group $\SO(V)(F)$.
\end{itemize}

\subsection{Some homogeneous spaces}

As a motivational example, consider first the Hermitian symmetric space $X$ associated to the group $\SO(V)$ for $V$ a real quadratic space of signature $(n-2, 2)$ and recall we always assume that $n\geq 3$. Then $X$ is the set of oriented negative definite two planes $W$ in $V$ which is a complex manifold of dimension $n-2$ with a transitive action of $\SO(V)$. Now an oriented negative definite plane $W$ in $V$ gives rise to a homomorphism $h: \SO(W)\rightarrow \SO(V)$ over $\mathbb{R}$ whose centralizer is $H=\SO(W)\times \SO(W^{\perp})$ and therefore we have $X=\SO(V)/ H$. The tangent space at identity is given by $\Hom(W, W^{\perp})$ which has a complex Hermitian structure coming from the chosen orientation of $W$. 

We now consider the case $(V,Q)$ is a quadratic space over a local non-archimedean field $F$ with $\epsilon(V)=-1$ and unit determinant. Let $K=F(\sqrt{D})$ be the unramified quadratic extension of $F$. Gross introduced the following homogeneous spaces and conjectured their relation with the superspecial locus.
\begin{itemize}

\item The space $X=\SO(V)/\SO(W)\times \SO(W^{\perp})$: We let $X$ be the set of  oriented planes $W$ in $V$ with fixed determinant $-D$ and $\epsilon(W)=-1$. Then exactly as the real case we have $X=\SO(V)/H$ where $H=\SO(W)\times \SO(W^{\perp})$.

 \item The space $Y= \SO(V)/\SO(\calO_{K}.e)\times \SO(M)$: However $\SO(W^{\perp})$ is not always compact and Gross introduced the following modification. Given the orientation on $W$, we can write $W=K.e$ wth $\Nm(e)=\pi_{F}$. Then $W$ has a canonical lattice $\calO_{K}.e$. Let  $Y= \SO(V)/\SO(\calO_{K}.e)\times \SO(M)$ where $\calO_{K}.e$ provides a natural integral structure of the plane $W$ via its chosen orientation and $M$ is a self-dual lattice in $W^{\perp}$. Therefore $Y$ parametrizes those pairs $(W, M)$ for an oriented two plane $W$ and a lattice $M$ in its orthogonal complement $W^{\perp}$. Then $Y$ is an $K$-analytic manifold whose $K$ structure is induced by the orientation on $W$.

\item The space $Z=\SO(V)/\SO(\Lambda,\pm)$: Since $\epsilon(V)=-1$, we let $\Lambda$ be an almost self-dual lattice in $V$. Another homogeneous space we will consider is $Z=\SO(V)/\SO(\Lambda,\pm)$ where $\SO(\Lambda,\pm)$ is the stabilizer of $\Lambda$ with a fixed orientation on $\Lambda/ \Lambda^{\vee}$. 
\end{itemize}

There is a natural map
$$\red: Y\rightarrow Z.$$
The map $\red$ sends $(W, M)$ to $\Lambda=M+\calO_{K}.e$ with the orientation of $\Lambda/\Lambda^{\vee}$ induced by the orientation of $W$.  The fibers of this map $\red^{-1}(z)$ with $z\in Z$ are quite simple and can be shown to be polydiscs. The following proposition can be proved using the Moy-Prasad filtration of the parahoric subgroup $\SO(\Lambda,\pm)$. See \cite[Theorem 9.1]{Gro-indefinite}
\begin{proposition}
The fibers $\red^{-1}(z)$ of $\red: Y\rightarrow Z$ are polydiscs of the form $(\pi\calO_{K})^{n-2}$. 
\end{proposition}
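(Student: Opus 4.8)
The plan is to analyze the fiber $\red^{-1}(z)$ over a fixed point $z \in Z$, represented by an almost self-dual lattice $\Lambda$ together with an orientation of $\Lambda/\Lambda^\vee$. A point of $Y$ in this fiber consists of a pair $(W, M)$ with $W = K.e$, $\Nm(e) = \pi_F$, $M$ a self-dual lattice in $W^\perp$, and such that $M + \calO_K.e = \Lambda$ with compatible orientation. So I would first fix one base pair $(W_0, M_0)$ mapping to $z$, giving a distinguished point of the fiber, and then describe all other pairs by how they differ from the base one. The natural way to do this is via the stabilizer group: the parahoric $P = \SO(\Lambda, \pm)$ acts transitively on the fiber (since $\SO(V)$ acts transitively on $Z$ with stabilizer $P$, and on $Y$ the fiber over $z$ is $P/(P \cap H_1)$ where $H_1 = \SO(\calO_K.e) \times \SO(M)$), so the fiber is identified with the homogeneous space $P / (P \cap \SO(\calO_K.e)\times\SO(M))$.

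Next I would bring in the Moy--Prasad filtration of the parahoric $P = \SO(\Lambda, \pm)$. Since $\Lambda$ is almost self-dual, $P$ is a non-hyperspecial maximal parahoric and its Moy--Prasad filtration $P = P_0 \supset P_{1/2} \supset P_1 \supset \cdots$ (indexed by the appropriate rationals for the non-quasi-split $\SO(V)$) has successive quotients that are vector groups over the residue field $k$. The subgroup $P \cap H_1$ is the stabilizer in $P$ of the refined datum $(\calO_K.e, M)$, i.e. of a splitting of $\Lambda$ refining the coarser datum that only remembers $\Lambda$ and its orientation. I would show that $P \cap H_1$ contains a deep congruence subgroup $P_r$ for suitable $r$, and that modulo $P_r$ the quotient $P/(P\cap H_1)$ is identified with a product of copies of $\pi_F\calO_K$ (equivalently $\pi\calO_K$ in the notation of the proposition): the $K$-structure enters because the relevant root subgroups, those moving $e$ within $\Lambda$ while fixing $\Lambda$ mod $\Lambda^\vee$, are naturally $K$-lines rather than $F$-lines, owing to the action of $\calO_K$ on $\calO_K.e$. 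Counting dimensions: $W^\perp$ has dimension $n-2$, and a self-dual $M \subset W^\perp$ lifting the fixed reduction can be deformed in $\Hom_{\calO_F}(\calO_K.e, M/\pi_F M)$-many ways, which after accounting for the orthogonality constraint and the $K$-linearity gives exactly $(\pi_F\calO_K)^{n-2}$; this matches the expected dimension since $\dim_K Y = n - 2$ and $Z$ is discrete.

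Concretely the key steps, in order, are: (i) identify $\red^{-1}(z) \cong P/(P \cap H_1)$ via transitivity of the $\SO(V)$-actions; (ii) recall the Moy--Prasad filtration of the maximal parahoric $P = \SO(\Lambda,\pm)$ attached to an almost self-dual lattice, and describe the relevant graded pieces explicitly in terms of $\Lambda$, $\Lambda^\vee$ and the anisotropic residual datum $\Lambda/\Lambda^\vee$; (iii) compute $P \cap H_1$ as the stabilizer of the finer integral structure and show it is a congruence-type subgroup sitting between two steps of the filtration; (iv) exponentiate the relevant Lie-algebra lattice to realize the quotient as a polydisc, keeping track of the $\calO_K$-module structure coming from the orientation to see that each of the $n-2$ disc factors is $\pi\calO_K$ rather than $\pi\calO_F$; (v) check the identification is an isomorphism of $K$-analytic (equivalently rigid) spaces, not merely of sets.

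The main obstacle I expect is step (iv): pinning down the exact level — i.e. which power of $\pi$ appears and why the disc is over $\calO_K$ and not $\calO_F$ — since this requires careful bookkeeping with the Moy--Prasad grading of the \emph{non-quasi-split} special orthogonal group, where the indexing is by half-integers and the anisotropic kernel contributes a ramified-looking jump even though $K/F$ is unramified. In particular one must verify that the root directions realizing the deformation $(W,M) \mapsto (W', M')$ are precisely those whose root subgroups are parametrized by $K$, and that the congruence subgroup $P \cap H_1$ truncates them at level making the quotient $\pi\calO_K$; getting the normalization consistent with \cite{Gro-indefinite} is where the real work lies. The remaining steps are essentially formal once the local structure theory is set up, and I would lean on \cite[Theorem 9.1]{Gro-indefinite} for the precise Moy--Prasad computation.
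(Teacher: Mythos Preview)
Your proposal is correct and follows exactly the approach the paper indicates: the paper does not give a self-contained proof but simply says the result ``can be proved using the Moy--Prasad filtration of the parahoric subgroup $\SO(\Lambda,\pm)$'' and refers to \cite[Theorem 9.1]{Gro-indefinite}, which is precisely the strategy and reference you outline in steps (i)--(v). Your write-up is in fact considerably more detailed than what appears in the paper.
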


We will show later that the homogeneous space $Z$ can be used to parametrize the superspecial locus on the orthogonal type Rapoport-Zink space while $Y$ can be used to parametrize the $\mathcal{O}_{K}$-lift of the superspecial locus. Finally $X$ will be related to the admissible locus of the period domain.

\section{Bruhat-Tits stratification of orthogonal type Rapoport-Zink spaces}
In this section we review the theory of Howard and Pappas on the Bruhat-Tits stratification of the basic Rapoport-Zink space of $\Gspin$ type. In this section we fix an algebraic closure $\FF$ of $\FF_{p}$ and write $W_{0}=W(\FF)$ its ring of Witt vectors. We denote by $K_{0}=W(\FF)[\frac{1}{p}]$ the fraction field of $W_{0}$.

\subsection{Local Shimura varieties of orthogonal type} We fix a quadratic space $(V, Q)$ over $\QQ_{p}$ and we assume that $\det(V)$ is a unit and $\epsilon(V)=1$ throughout this section. By Theorem \ref{lattice}, we have a self-dual lattice $L \subset V$. For any $\ZZ_{p}$-algebra $R$, we write the base change of $L$ to $R$ by $L_{R}$. We denote by $C(L)$ the Clifford algebra of $L$ which is a $\mathbb{Z}/2\mathbb{Z}$-graded $\mathbb{Z}_{p}$-algebra with the grading $C(L)=C^{+}(L)\oplus C^{-}(L)$. It is a free rank $2^{n}$-algebra over $\ZZ_{p}$ equipped with a canonical involution $*: C(L)\rightarrow C(L)$ and a reduced trace map $\Tr: C(L)\rightarrow \ZZ_{p}$. We define the spinor similitude group $G^{\Diamond}$ over $\ZZ_{p}$ to be the group whose $R$ points for any $\ZZ_{p}$-algebra $R$ is given by
$$G^{\Diamond}(R)=\{g\in C^{+}(L_{R})^{\times}: gL_{R}g^{-1}=L_{R}, gg^{*}\in R^{\times}\}.$$
We define a character $\eta_{G^{\Diamond}}: G^{\Diamond}\rightarrow \mathbb{G}_{m}$ by $g\rightarrow gg^{*}$, this is called the spinor norm. Picking an element $\delta^{*}=-\delta$ and let $\psi_{\delta}: C(L)\times C(L)\rightarrow \ZZ_{p}$ be the symplectic form given by $\psi_{\delta}(c_{1},c_{2})=\Tr(c_{1}\delta c^{*}_{2})$. The natural left action of $C(L)^{\times}$ on $C(L)$ gives a closed immersion 
\begin{equation}
i: G^{\Diamond}\rightarrow \GSp(C(L), \psi_{\delta}).
\end{equation}
In fact, by \cite{Kisin-abe}, there is a finite list of tensors $\{s_{\alpha}\}\subset C(L)^{\otimes}$ such that $G^{\Diamond}$ is precisely the stabilizer in $\GL(C(L)))$ of this list. We will always fix such a list. 
There are  short exact sequences
$$1\rightarrow \mathbb{G}_{m}\rightarrow G^{\Diamond}\rightarrow \SO(L)\rightarrow 1$$
and
$$1\rightarrow \mu_{2}\rightarrow G^{\Diamond \der}\rightarrow \SO(L)\rightarrow 1$$
with the derived group $G^{\Diamond \der}$ being simply connected.  We will set $G=\SO(L)$ from here on.

Following Howard-Pappas \cite{HP17}, we choose a basis $x_{1}, x_{2}, \cdots, x_{n}\in L$ such that  the matrix representing the inner product is
$$
\begin{pmatrix}
0 & 1 & & & &\\
1 & 0 & & & &\\
& & * & & & &\\
& &   &*& & &\\
& &   & &\ddots & &\\
& & & & & &*\\
\end{pmatrix}.
$$
This defines 
\begin{itemize}
\item a cocharacter $\mu^{\Diamond}: \mathbb{G}_{m}\rightarrow G^{\Diamond}$ by $\mu^{\Diamond}(t)=t^{-1}x_{1}x_{2}+x_{2}x_{1}$;
\item an element $b^{\Diamond}=x_{3}(p^{-1}x_{1}+x_{2})\in G^{\Diamond}(\QQ_{p})$.
\end{itemize}

The element $b^{\Diamond}$ gives a basic element in the Kottwitz set $B(G^{\Diamond})$ defined in \cite{Ko85} and it in fact lies in set of the neutral acceptable elements $B(G^{\Diamond}, \mu^{\Diamond})$. The composition $i\circ\mu:\mathbb{G}_{m}\rightarrow \GSp(C(L), \psi_{\delta}) $ is a miniscule cocharacter. This means that
the datum $(G^{\Diamond}, \{\mu^{\Diamond}\}, b^{\Diamond}, i)$ is a local (unramfied) Shimura datum of Hodge type in the sense of \cite{HP17}. One then attaches a local Shimura variety which in this case we will be referred to as the $\GSpin$-type Rapoport-Zink space: $\breve{\calM}^{\Diamond}=\breve{\calM}(G^{\Diamond}, \{\mu^{\Diamond}\}, b^{\Diamond})$. This is a formal scheme locally formally of finite type over $\Spf (W_{0})$. The image $(G, \{\mu\}, b)$ of $(G^{\Diamond}, \{\mu^{\Diamond}\}, b^{\Diamond})$ under the natural map $G^{\Diamond}\rightarrow G$ is local Shimura datum of abelian type in the sense of Shen \cite[Definition 4.1]{Shen-gen} and we obtain the orthogonal type Rapoport-Zink space $\breve{\calM}=\breve{\calM}(G, {\mu}, b)$ and by \cite[Corollary 7.8]{Shen-gen} we in fact have \begin{equation}\label{spin-to-or}\breve{\calM}=\breve{\calM^{\Diamond}}/p^{\ZZ}.\end{equation} 

We summarize the above discussion in the following diagram of Rapoport-Zink spaces  
\begin{equation}\label{spin-ortho-diag}
\begin{tikzcd}
 \breve{\mathcal{M}}^{\Diamond}\arrow[r, hook] \arrow[d]
    & \breve{\mathcal{M}}_{\GSp(C(L))}\\
   \breve{\mathcal{M}} & 
\end{tikzcd}
\end{equation} 
here  $\breve{\mathcal{M}}_{\GSp(C(L))}$ is the basic Rapoport-Zink space for the group $\GSp(C(L))$ and the horizontal arrow is given by the embedding of the local Shimura datum $(G^{\Diamond}, \{\mu^{\Diamond}\}, b^{\Diamond}, i)$ in the corresponding  local Shimura datum for the group  $\GSp(C(L))$, the vertical map is the natural covering map. We will need the following lemma later. 

\begin{lemma}\label{X_{0}}
There is a unique $p$-divisible group $X_{0}$ over $\FF$ such that its covariant {\Dieu} module $\mathbb{D}(X_{0})(W_{0})=C(L)_{W_{0}}$ with Frobenius induced by $b^{\Diamond}\sigma$ and whose Hodge filtration $V \mathbb{D}(X_{0})(\FF)\subset \mathbb{D}(X_{0})(\FF)$ is induced by the reduction of ${\mu}^{\Diamond}_{\FF}$ of $\mu^{\Diamond}$.
\end{lemma}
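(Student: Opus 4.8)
The statement is essentially a bookkeeping exercise in covariant Dieudonné theory, once one knows that $b^{\Diamond}$ is a cocharacter-valued element adapted to $\mu^{\Diamond}$ in the sense required. The plan is to construct $X_0$ by Dieudonné theory: over the perfect field $\FF$, covariant Dieudonné theory gives an (anti-)equivalence between $p$-divisible groups over $\FF$ and Dieudonné modules, i.e.\ finite free $W_0$-modules $\mathbb{D}$ equipped with a $\sigma$-linear operator $F$ and a $\sigma^{-1}$-linear operator $V$ satisfying $FV = VF = p$. So first I would set $\mathbb{D} := C(L)_{W_0}$, define $F := b^{\Diamond}\sigma$ acting on it, and define $V := p (b^{\Diamond}\sigma)^{-1} = p\,\sigma^{-1}(b^{\Diamond})^{-1}$. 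The point requiring a small check is that $F$ indeed preserves a $W_0$-lattice — equivalently that $F\mathbb{D} \supseteq p\mathbb{D}$ and $F\mathbb{D} \subseteq \mathbb{D}$, up to possibly shrinking/rescaling — which follows from the fact that $\mu^{\Diamond}$ is minuscule and $b^{\Diamond}$ lies in the acceptable set $B(G^{\Diamond},\mu^{\Diamond})$: the relative position of $\mathbb{D}$ and $F\mathbb{D}$ is governed by $\mu^{\Diamond}$, which by minusculity means $p\mathbb{D} \subseteq F\mathbb{D} \subseteq \mathbb{D}$ with $\mathbb{D}/F\mathbb{D}$ of the type prescribed by $\mu^{\Diamond}_{\FF}$. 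Concretely, using the explicit basis $x_1,\dots,x_n$ and the formulas $\mu^{\Diamond}(t) = t^{-1}x_1 x_2 + x_2 x_1$ and $b^{\Diamond} = x_3(p^{-1}x_1 + x_2)$ in the Clifford algebra, one verifies directly that $b^{\Diamond}$ sends the lattice $C(L)_{W_0}$ into itself after multiplying by $p$ appropriately, and that the Hodge filtration $\mathrm{Ker}(F \bmod p)$ inside $\mathbb{D}/p\mathbb{D} = C(L)_{\FF}$ is exactly the one cut out by the reduction $\bar\mu^{\Diamond}_{\FF}$.

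Next I would invoke Dieudonné theory to obtain the $p$-divisible group: the pair $(\mathbb{D}, F, V)$ is a Dieudonné module (free over $W_0$, $FV=VF=p$), hence corresponds to a unique $p$-divisible group $X_0$ over $\FF$ with $\mathbb{D}(X_0)(W_0) \cong \mathbb{D} = C(L)_{W_0}$ compatibly with $F$; the dimension of $X_0$ equals $\dim_{\FF}(\mathbb{D}/F\mathbb{D})$, which by the minuscule cocharacter $\mu^{\Diamond}$ is $2^{n-1}$ (half the rank), so $X_0$ is a $p$-divisible group of height $2^n$ and dimension $2^{n-1}$. Uniqueness is automatic from the (anti-)equivalence of categories: any $p$-divisible group with the stated Dieudonné module and Frobenius has Dieudonné module isomorphic to $(\mathbb{D},F)$, hence is isomorphic to $X_0$. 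The Hodge filtration statement is then just the identification $\mathrm{Lie}(X_0) = \mathbb{D}(X_0)(\FF)/(\text{Hodge filtration})$ with the quotient $\mathbb{D}/F\mathbb{D} \bmod p$, which we have arranged to be $\bar\mu^{\Diamond}_{\FF}$-induced.

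I do not expect a serious obstacle here: the only genuine content is checking that $b^{\Diamond}$ and $\mu^{\Diamond}$ are mutually compatible so that $(C(L)_{W_0}, b^{\Diamond}\sigma)$ really is (up to the usual normalization) a Dieudonné lattice with the prescribed Hodge filtration — and this is exactly what it means for $(G^{\Diamond}, \{\mu^{\Diamond}\}, b^{\Diamond}, i)$ to be a local Shimura datum of Hodge type, which is asserted in the text just above the lemma citing \cite{HP17}. So the mild technical point — verifying the lattice-preservation and the Hodge-filtration normalization from the explicit Clifford-algebra formulas — is the part to be careful with, but it is a direct computation rather than a real difficulty; indeed $X_0$ is precisely the framing object underlying the Rapoport--Zink space $\breve{\calM}^{\Diamond}$, so its existence is implicit in the very definition of $\breve{\calM}^{\Diamond}$ and the lemma is really just recording this fact in the form needed for the sequel.
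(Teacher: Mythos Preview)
Your proposal is correct in outline and indeed goes further than the paper: the paper gives no argument at all, merely citing \cite[Lemma 2.2.5]{HP17} and noting that covariant rather than contravariant Dieudonn\'e theory is being used. Your sketch --- build the Dieudonn\'e module $(C(L)_{W_0}, F = b^{\Diamond}\sigma, V = pF^{-1})$, verify the lattice conditions from the explicit Clifford-algebra formulas and minusculity of $\mu^{\Diamond}$, then invoke the equivalence of categories --- is exactly what \cite{HP17} does, so in substance the approaches coincide.

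One small point worth tightening: since $b^{\Diamond} = x_3(p^{-1}x_1 + x_2)$ visibly contains $p^{-1}$, the operator $b^{\Diamond}\sigma$ does \emph{not} preserve $C(L)_{W_0}$; rather it is $pb^{\Diamond}\sigma$ (equivalently $V$ in the covariant normalization) that does. This is precisely the covariant/contravariant bookkeeping the paper's one-line proof alludes to, and it affects which of $p\mathbb{D} \subseteq F\mathbb{D} \subseteq \mathbb{D}$ or $p\mathbb{D} \subseteq V\mathbb{D} \subseteq \mathbb{D}$ you should be verifying. Your identification of the Hodge filtration $V\mathbb{D}/p\mathbb{D}$ with $\Ker(F \bmod p)$ is correct once this is sorted out.
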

\begin{proof}
This is proved in \cite[Lemma 2.2.5]{HP17} but note here we are using covariant {\Dieu} theory.
\end{proof}

\begin{remark}
We in fact can and will assume that $X_{0}$ in the previous lemma comes from a global point $x_{0}$ on the integral model of the $\GSpin$-Shimura variety $\mathscr{S}^{\Diamond}_{K^{\Diamond}}$ that we will introduce later. Also we note that the construction of the Rapoport-Zink space $\breve{\mathcal{M}}^{\Diamond}$ is by global method and uses the existence of the integral model $\mathscr{S}^{\Diamond}_{K^{\Diamond}}$. 
\end{remark}
The formal scheme $\breve{\calM}^{\Diamond}$ represents a functor $$\breve{\calM}^{\Diamond}:  \Nilp_{W_{0}}\rightarrow \Sets$$ where $\Nilp_{W_{0}}$ is the category of all Noetherian $W_{0}$-algebra in which $p$ is locally nilpotent. This functor is not easy to describe. But  let $\ANilp_{W_{0}}$ be the full subcategory of Noetherian $W_{0}$-algebra in which $p$ is nilpotent. We restrict this functor to $\ANilp_{W_{0}}$ then this functor assigns each $R\in \ANilp_{W_{0}}$ the set of isomorphism classes of $(X, \rho, (t_{\alpha}))$ where

\begin{itemize}
\item $X$ is a $p$-divisible group over $R$;
\item $\rho: X_{0}\times_{\FF} \bar{R}\rightarrow X\times_{R} \bar{R}$ is a quasi-isogeny;
\item $(t_{\alpha})$ is a collection of tensors in $\mathbb{D}(X)^{\otimes}(R)$ such that the sheaf $$\Isom_{t_{\alpha}, s_{\alpha}\otimes 1}(\mathbb{D}(X)(R), C(L)\otimes R)$$ of isomorphisms that respect the tensors $(t_{\alpha})$ and $(s_{\alpha}\otimes 1)$  and is compatible with filtration induced by $\mu$ on $C(L)\otimes R$ and the Hodge filtration on $\mathbb{D}(X)(R)$ is a $P_{\mu}\times_{W_{0}} R$ torsor where $P_{\mu}$ is the parabolic induced by $\mu$.
\end{itemize}

\subsection{The group $J_{b}(\QQ_{p})$} The Rapoport-Zink space $\breve{\calM}^{\Diamond}$ carries an action of the group $J_{b^{\Diamond}}(\QQ_{p})$ whose definition we now recall. For an $\QQ_{p}$-algebra $R$, the $R$ points of the algebraic group $J_{b^{\Diamond}}$ is given by
$$\{g\in G^{\Diamond}(R\otimes_{\QQ_{p}}K_{0}): gb^{\Diamond}\sigma(g)^{-1}=b^{\Diamond}\}.$$

Let $\Phi=b^{\Diamond}\sigma$ which acts on $V_{K_{0}}$, consider the quadratic space $(V^{\Phi}_{K_{0}}, Q)$ given by restricting the space $(V_{K_{0}}, Q)$. Then $(V^{\Phi}_{K_{0}}, Q)$ has the same determinant as $(V,Q)$ but $\epsilon(V^{\Phi}_{K_{0}})=-\epsilon(V)=-1$ by \cite[Proposition 4.25]{HP17}. Now the group $J_{b^{\Diamond}}(\QQ_{p})$ can be explicitly described using the space $V^{\Phi}_{K_{0}}$.

\begin{lemma}\label{J-diamond}
The group $J_{b^{\Diamond}}(\QQ_{p})=\GSpin(V^{\Phi}_{K_{0}})$.
\end{lemma}
\begin{proof}
See the proof of \cite[Proposition 4.2.5]{HP17}.
\end{proof}

Similarly, $\breve{\calM}$ admits an action of $J_{b}(\QQ_{p})=\{g\in G(K_{0}): gb\sigma(g)^{-1}=b\}.$

\begin{lemma}
The group $J_{b}(\QQ_{p})=\SO(V^{\Phi}_{K_{0}})$.
\end{lemma}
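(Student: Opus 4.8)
The statement to prove is that $J_b(\QQ_p) = \SO(V^{\Phi}_{K_0})$, where $b = $ the image of $b^{\Diamond}$ under $G^{\Diamond} \to G = \SO(L)$, and $\Phi = b^{\Diamond}\sigma$ acts on $V_{K_0}$. The plan is to deduce this from Lemma~\ref{J-diamond}, which identifies $J_{b^{\Diamond}}(\QQ_p) = \GSpin(V^{\Phi}_{K_0})$, by pushing the exact sequence $1 \to \mathbb{G}_m \to G^{\Diamond} \to \SO(L) \to 1$ through the $\sigma$-twisted centralizer construction. First I would record that applying the functor $G^{\Diamond} \mapsto J_{b^{\Diamond}}$ to this short exact sequence is compatible with taking $\sigma$-conjugacy-class stabilizers: concretely, the center $\mathbb{G}_m \subset G^{\Diamond}$ is $\sigma$-fixed and lands in $J_{b^{\Diamond}}$, the map $G^{\Diamond} \to \SO(L)$ carries $b^{\Diamond}$ to $b$, and hence carries $J_{b^{\Diamond}}$ into $J_b$. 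One then obtains a map of algebraic groups $\GSpin(V^{\Phi}_{K_0}) = J_{b^{\Diamond}} \to J_b \subset \SO(V^{\Phi}_{K_0})$, which on the level of inner forms of $\QQ_p$-reductive groups must be the standard projection $\GSpin(V') \to \SO(V')$ for $V' = V^{\Phi}_{K_0}$. Since that projection is surjective (it has kernel $\mathbb{G}_m$), the induced map $J_{b^{\Diamond}} \to J_b$ has image all of $\SO(V^{\Phi}_{K_0})$; it remains to see $J_b$ is not bigger than that image.

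For the reverse containment I would argue as follows. Both $J_b$ and $\SO(V^{\Phi}_{K_0})$ are forms of $\SO$ over $\QQ_p$ of the same dimension $\binom{n}{2}$, so it suffices to check the inclusion $J_b \hookrightarrow \SO(V^{\Phi}_{K_0})$ is an equality of $\QQ_p$-group schemes. This can be done either (a) by a dimension/connectedness count: $J_b$ is a connected reductive group by the general theory of $\sigma$-conjugacy class stabilizers for basic $[b]$ (Kottwitz), it surjects onto $\SO(V^{\Phi}_{K_0})$ from above via $J_{b^{\Diamond}}$ and injects into it, forcing equality; or (b) directly: an element $g \in G(K_0) = \SO(L_{K_0})$ with $g b \sigma(g)^{-1} = b$ is precisely a $K_0$-linear automorphism of $(V_{K_0}, Q)$ commuting with $\Phi = b\sigma$ up to the center — but since $G = \SO$ has trivial center (as $n$ is handled on the nose, or one works with $\mathrm{O}$ and takes determinant $1$), commuting with $\Phi$ exactly means preserving the $\QQ_p$-subspace $V^{\Phi}_{K_0}$, i.e. $g \in \SO(V^{\Phi}_{K_0})(\QQ_p)$. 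I would present approach (b) as the clean one, using that $\Phi$-fixed vectors form a $\QQ_p$-structure on $V_{K_0}$ by descent (a standard $\sigma$-semilinear descent, valid because $\Phi$ is $\sigma$-semilinear and $\Phi$-stable, cf. \cite[Proposition 4.2.5]{HP17}).

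The genuinely substantive input is the identification of the quadratic space $V^{\Phi}_{K_0}$ itself — its determinant equals $\det(V)$ and its Hasse--Witt invariant is $-\epsilon(V) = -1$, which is already cited from \cite[Proposition 4.25]{HP17} just above; with that in hand the present lemma is essentially a bookkeeping exercise around the central isogeny $\GSpin \to \SO$. So the main (mild) obstacle is just making precise the compatibility of the $\sigma$-centralizer functor with the exact sequence $1 \to \mathbb{G}_m \to G^{\Diamond} \to G \to 1$ and confirming that the center $\mathbb{G}_m$ contributes exactly the similitude factor that distinguishes $\GSpin$ from $\SO$ — i.e. that passing from $J_{b^{\Diamond}}$ to $J_b$ kills precisely that $\mathbb{G}_m$ and nothing more. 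Since the whole argument is parallel to, and lighter than, the proof of Lemma~\ref{J-diamond} in \cite{HP17}, I would keep the write-up to a short paragraph citing \cite[Proposition 4.2.5, Proposition 4.25]{HP17} and the exact sequence $1 \to \mathbb{G}_m \to G^{\Diamond} \to \SO(L) \to 1$ recorded in Section~3.1.
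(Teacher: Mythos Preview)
Your proposal is correct and follows the same approach as the paper, which simply records that the lemma ``follows immediately from Lemma~\ref{J-diamond}'': you are just spelling out why the central isogeny $1 \to \mathbb{G}_m \to G^{\Diamond} \to G \to 1$ carries $J_{b^{\Diamond}} = \GSpin(V^{\Phi}_{K_0})$ onto $J_b = \SO(V^{\Phi}_{K_0})$. Your direct argument (b) is also fine and arguably cleaner (note only that the remark about trivial center is unnecessary---the condition $gb\sigma(g)^{-1}=b$ is literally $g\Phi = \Phi g$, with no center ambiguity).
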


\begin{proof}
This follows immediately from Lemma \ref{J-diamond}.
\end{proof}

Recall the $p$-divisible group $X_{0}$ introduced in Lemma \ref{X_{0}}. That lemma also implies the following. Since $$\End_{W_{0}}(\mathbb{D}(X_{0})(W_{0}))=\Hom_{W_{0}}(\mathbb{D}(X_{0})(W_{0}), \mathbb{D}(X_{0})(W_{0}))=C(L)_{W_{0}}^{*}\otimes C(L)_{W_{0}},$$ in particular, writing $D(X_{0})=\mathbb{D}(X_{0})(W_{0})[\frac{1}{p}]$ we have 
\begin{equation}
\label{V-action}V_{K_{0}}\rightarrow \End({D}(X_{0})).
\end{equation} 
The action of $\Phi=b^{\Diamond}\circ\sigma$ on $V_{K_{0}}$ is related to the induced Frobenius $F$ on the isocrystal $D(X_{0})$ by $\Phi(x)=F\circ x \circ F^{-1} \in  \End({D}(X_{0}))$. This implies that we have a map
\begin{equation*}
V^{\Phi}_{K_{0}}\rightarrow \End(D(X_{0}), F)=\End(X_{0})_{\QQ}.
\end{equation*}

\subsection{Bruhat-Tits stratification} A \emph{vertext lattice} $\Lambda$ in $V^{\Phi}_{K_{0}}$ is by definition a $\ZZ_{p}$-lattice such that \begin{equation}p\Lambda\subset\Lambda^{\vee}\subset \Lambda.\end{equation} We define the type $t_{\Lambda}$ of $\Lambda$ by setting $t_{\Lambda}=\dim_{\FF_{p}} \Lambda/\Lambda^{\vee}$. This is an even integer such that $2\leq t(\Lambda)\leq t_{\max}$ where $t_{\max}$ is given by
\begin{itemize}
\item $t_{\max}=n-1$ if $n$ is odd;
\item $t_{\max}=n-2$ if $n$ is even and $\det(V)=(-1)^{\frac{n}{2}}$;
\item $t_{\max}=n$ if $n$ is even and $\det(V)\neq(-1)^{\frac{n}{2}}$.
\end{itemize}

 If $t_{\Lambda}=r$, we will also write $p\Lambda\subset^{n-r}\Lambda^{\vee}\subset^{r} \Lambda$. We will be particularly concerned with those lattices of type $t_{\Lambda}=2$ as these will be related to the superspecial points on $\breve{\mathcal{M}}$. First of all, this is a non-empty collection of lattices. 
 
 \begin{lemma} 
 The collection of lattices $\Lambda$ in $V^{\Phi}_{K_{0}}$ whose $t_{\Lambda}=2$ is non-empty. 
 \end{lemma}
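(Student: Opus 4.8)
The plan is to deduce the statement directly from Theorem \ref{lattice}, applied not to $V$ itself but to the nearby quadratic space. First I would recall that $V^{\Phi}_{K_{0}}$ is a quadratic space over $\QQ_{p}$ of dimension $n$ which, by \cite[Proposition 4.25]{HP17} as quoted above, has the same unit determinant as $(V,Q)$ but Hasse--Witt invariant $\epsilon(V^{\Phi}_{K_{0}})=-1$. Since $\epsilon=-1$, the second bullet of Theorem \ref{lattice} furnishes an almost self-dual lattice $\Lambda\subset V^{\Phi}_{K_{0}}$, that is, a lattice with $\Lambda^{\vee}\subset\Lambda$ for which $\Lambda/\Lambda^{\vee}$ is a two-dimensional quadratic space over $\FF_{p}$ (isomorphic to the unramified quadratic extension with its norm form).

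Next I would observe that such an almost self-dual lattice is automatically a vertex lattice of type $2$. Indeed, $\Lambda/\Lambda^{\vee}$ is a two-dimensional $\FF_{p}$-vector space, hence killed by $p$, so $p\Lambda\subset\Lambda^{\vee}$; together with $\Lambda^{\vee}\subset\Lambda$ this gives the chain $p\Lambda\subset\Lambda^{\vee}\subset\Lambda$ defining a vertex lattice, whose type is $t_{\Lambda}=\dim_{\FF_{p}}\Lambda/\Lambda^{\vee}=2$ by construction. Since $n\geq 3$, one has $t_{\max}\geq 2$ in each of the three cases listed for $t_{\max}$ above, so the value $2$ is admissible. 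This shows the collection in question is non-empty.

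The argument presents no genuine obstacle; the only point meriting a line of justification is the elementary implication ``almost self-dual $\Rightarrow$ type $2$ vertex lattice,'' which is immediate from $p\cdot(\Lambda/\Lambda^{\vee})=0$. Alternatively one could argue by hand: decompose $V^{\Phi}_{K_{0}}$ orthogonally into hyperbolic planes and an anisotropic kernel accounting for the invariant $\epsilon=-1$, equip the hyperbolic planes with their standard self-dual lattices and a suitable two-dimensional anisotropic summand with the maximal lattice $\calO_{K}$ carrying its norm form; the resulting lattice is visibly of type $2$. I would, however, favor the appeal to Theorem \ref{lattice}, since it is cleaner and sidesteps a case analysis on the parity of $n$.
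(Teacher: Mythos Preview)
Your argument is correct and follows essentially the same route as the paper: apply Theorem \ref{lattice} to the nearby space $V^{\Phi}_{K_{0}}$ (which has unit determinant and $\epsilon=-1$) to obtain an almost self-dual lattice, then observe that $p$ kills the two-dimensional quotient $\Lambda/\Lambda^{\vee}$ so that $p\Lambda\subset\Lambda^{\vee}\subset\Lambda$ with $t_{\Lambda}=2$. Your extra remark that $t_{\max}\geq 2$ for $n\geq 3$ is a nice sanity check the paper omits, and the alternative explicit construction you sketch is unnecessary but harmless.
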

 \begin{proof}
 Consider the quadratic space $(V^{\Phi}_{K_{0}}, Q)$. Then by Theorem \ref{lattice}, there is an almost self-dual lattice $\Lambda^{\vee}$ in $V^{\Phi}_{K_{0}}$. Then $\Lambda/\Lambda^{\vee}$ is a quadratic space over $\FF_{p}$ of dimension $2$. Finally $p$ annihilates $\Lambda/\Lambda^{\vee}$ and therefore $p\Lambda\subset \Lambda^{\vee}$.
 \end{proof}
 
 Fix a vertex lattice $\Lambda^{\vee}\subset\Lambda\subset V^{\Phi}_{K_{0}}$, we denote by $\breve{\calM}^{\Diamond}_{\Lambda}$ the closed formal subscheme defined by those points $(X, \rho, (t_{\alpha}))$ that satisfies the condition \begin{equation}\label{MLam}\rho\circ\Lambda^{\vee}\circ\rho^{-1}\subset \End(X).\end{equation}
 Then it follows from \eqref{spin-to-or} that it makes sense to put
 \begin{equation}\label{ortho-spin}
 \breve{\calM}_{\Lambda}=\breve{\calM}^{\Diamond}_{\Lambda}/p^{\ZZ}.
 \end{equation} 
 We will refer to this as the \emph{vertex stratum} associated to $\Lambda$ in $\breve{\calM}$. We will later need to use the \emph{Bruhat-Tits stratum} defined by 
\begin{equation*}
\BT_{\Lambda}=\breve{\calM}_{\Lambda}-\bigcup_{\Lambda^{\prime}\subset \Lambda}\breve{\calM}_{\Lambda^{\prime}}.
\end{equation*}
 Next we recall that the $\FF$ points of $\breve{\calM}_{\Lambda}$ can be described by the so-called special lattices.  A \emph{special lattice} $L\subset V_{K_{0}}$ is a self-dual $W_{0}$ lattice of $V_{K_{0}}$ such that 
 \begin{equation*}
 L+\Phi_{*}(L)/L\cong W_{0}/pW_{0} 
 \end{equation*}  
 where $\Phi_{*}(L)$ is the $W_{0}$-module generated by $\Phi(L)$. If $L$ is a special lattice, then denote by $L^{(r)}=L+\Phi_{*}(L)+\cdots+\Phi^{r}_{*}(L)$.  The following theorem implies that the chain of lattices $L\subset L^{(1)}\subset \cdots \subset L^{(r)}$ stabilizes and gives rise to a vertex lattice containing $L$. 
 
 \begin{proposition}\label{special-vertex}
 There is a unique integer $1\leq d\leq t_{\max}/2$ such that $L^{(d)}=L^{(d+1)}$. Moreover $L^{(d)}=\Lambda(L)_{W_{0}}$ for a vertex lattice $\Lambda(L)$ of type $2d$.
 \end{proposition}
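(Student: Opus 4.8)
The plan is to study the ascending chain $L \subset L^{(1)} \subset L^{(2)} \subset \cdots$ by tracking the ``defect'' of each $L^{(r)}$ from being $\Phi$-stable, and to show this defect strictly decreases until it vanishes. First I would record the basic structural fact about a special lattice $L$: since $L + \Phi_{*}(L) / L \cong W_0/pW_0$, the $W_0$-module $L^{(1)} = L + \Phi_{*}(L)$ satisfies $L \subset^{1} L^{(1)}$, and dualizing (using self-duality of $L$ and the fact that $\Phi$ preserves the quadratic form up to the appropriate twist) one gets $p L^{(1)} \subset (L^{(1)})^{\vee} \subset L^{(1)}$, i.e. $L^{(1)}$ is ``$p$-modular up to'' a bounded quantity. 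More precisely, I would show by induction that each $L^{(r)}$ is a $W_0$-lattice with $p L^{(r)} \subset \Phi_{*}(L^{(r)})$ ... wait, rather: that $\Phi_{*}(L^{(r-1)}) \subseteq L^{(r)}$ by construction, and that $L^{(r)} / L^{(r-1)}$ is killed by $p$ and has length that is weakly decreasing in $r$.

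The key mechanism is the following. Set $d_r = \mathrm{length}_{W_0}(L^{(r)}/L^{(r-1)})$ for $r \geq 1$ (with $d_1 = 1$). Applying $\Phi_{*}$, which is a $\sigma$-semilinear bijection on $V_{K_0}$, to the inclusion $L^{(r-1)} \subset^{d_r} L^{(r)}$ gives $\Phi_{*}(L^{(r-1)}) \subset^{d_r} \Phi_{*}(L^{(r)})$. Now $L^{(r)} = L^{(r-1)} + \Phi_{*}(L^{(r-1)})$ and $L^{(r+1)} = L^{(r)} + \Phi_{*}(L^{(r)})$, so $L^{(r+1)}/L^{(r)}$ is a quotient of $\Phi_{*}(L^{(r)})/(\Phi_{*}(L^{(r)}) \cap L^{(r)})$, which in turn is a quotient of $\Phi_{*}(L^{(r)})/\Phi_{*}(L^{(r-1)})$ since $\Phi_{*}(L^{(r-1)}) \subseteq L^{(r)}$. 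Hence $d_{r+1} \leq d_r$. To upgrade ``non-increasing'' to ``drops to zero in finitely many steps'', I would use that once $d_{r+1} = d_r$ one can show $d_{r+2} = d_{r+1}$ as well (the inclusion chain becomes ``rigid''), and then that a constant positive value of $d_r$ for all large $r$ would force $L^{(\infty)} := \bigcup_r L^{(r)}$ to fail to be a lattice — contradicting that everything lives inside the finite-dimensional space $V_{K_0}$ and that $\Phi_{*}$ maps the bounded set $\{L^{(r)}\}$ into a fixed lattice (e.g. $p^{-N}L$ for suitable $N$). Therefore there is a first index $d$ with $L^{(d)} = L^{(d+1)}$, equivalently $\Phi_{*}(L^{(d)}) \subseteq L^{(d)}$, so $L^{(d)}$ is $\Phi$-stable; combined with monotonicity this forces $L^{(r)} = L^{(d)}$ for all $r \geq d$.

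It then remains to identify $\Lambda(L) := L^{(d)}$ as (the $W_0$-base change of) a vertex lattice of type $2d$. Since $L^{(d)}$ is $\Phi$-stable, $\Phi$-Galois descent gives $L^{(d)} = \Lambda(L)_{W_0}$ for a $\ZZ_p$-lattice $\Lambda(L) \subset V^{\Phi}_{K_0}$. For the two defining conditions $p\Lambda(L) \subset \Lambda(L)^{\vee} \subset \Lambda(L)$, I would combine self-duality of $L$ with the quadratic-form compatibility of $\Phi$ to compute $(\Lambda(L)_{W_0})^{\vee}$ directly: dualizing $L \subset^{1} L^{(1)} \subset^{d_2} \cdots \subset^{d_d} L^{(d)}$ and using $(\Phi_{*}M)^{\vee} = \Phi_{*}(M^{\vee})$ shows $(L^{(d)})^{\vee} = L \cap \Phi_{*}(L) \cap \cdots \cap \Phi^{d}_{*}(L)$, which sits between $p L^{(d)}$ and $L^{(d)}$, and the colength $\dim_{\FF_p} \Lambda(L)/\Lambda(L)^{\vee}$ equals $2(d_1 + \cdots + d_d)$ by additivity — but here is where I'd need the refinement $d_1 = d_2 = \cdots = d_d = 1$, which should follow from the fact that each $L^{(r)}/L^{(r-1)}$ embeds into $L^{(1)}/L = W_0/pW_0$ via a power of $\Phi_{*}$, forcing each $d_r \leq 1$; since a genuine drop to $0$ only happens at step $d+1$, all earlier $d_r$ equal $1$. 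This yields type exactly $2d$, and minimality of $d$ together with Proposition \ref{special-vertex}'s asserted bound $d \leq t_{\max}/2$ follows from Theorem \ref{lattice} bounding the type of any vertex lattice.

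The main obstacle I anticipate is the bookkeeping in the second and third paragraphs: establishing that each successive quotient $L^{(r)}/L^{(r-1)}$ has length exactly $1$ (not merely non-increasing length) up until the chain stabilizes, since the whole ``type $= 2d$'' statement hinges on this. The cleanest route is probably to exploit the quadratic form: the pairing identifies $L^{(r)}/L^{(r-1)}$ with a subquotient related to $(L^{(d)})^{\vee}$-defect, and the fact that $\Phi$ scales $Q$ by a unit times $p^{\pm 1}$ controls how much the dual can shrink at each stage, pinning each jump to be of length $1$. The rest — semilinear descent, the dual computation, and the bound on $d$ — is then routine given Theorem \ref{lattice}.
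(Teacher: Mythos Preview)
The paper does not prove this proposition at all; it simply cites \cite[Proposition~5.2.2]{HP17}. Your sketch is an attempt to reconstruct that argument directly, and the overall shape (monotone colengths $d_r$, stabilization, descent to a $\Phi$-fixed lattice) is indeed how Howard--Pappas proceed.

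Two genuine gaps, however. First, your stabilization argument is circular: you assert that the set $\{L^{(r)}\}$ is bounded and then use this to contradict $d_r\equiv 1$, but boundedness is exactly what is at stake --- an infinite strictly ascending chain of $W_0$-lattices in a finite-dimensional $K_0$-space is perfectly possible. The missing input is that $\Phi$ is isoclinic of slope $0$ on $V_{K_0}$ (equivalently $\dim_{\QQ_p}V_{K_0}^{\Phi}=n$, which is recorded in the paper just before Lemma~\ref{J-diamond}). This gives a $\Phi$-stable $W_0$-lattice $N_{W_0}$ for any $\ZZ_p$-lattice $N\subset V_{K_0}^{\Phi}$; then $L\subset p^{-a}N_{W_0}$ forces $L^{(r)}\subset p^{-a}N_{W_0}$ for all $r$, and the chain stabilizes.

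Second, your claim that ``$\Phi$ scales $Q$ by a unit times $p^{\pm 1}$'' is false: since $b^{\Diamond}\in\GSpin$ acts on $V$ through $\SO(V)$, one has $Q(\Phi v)=\sigma(Q(v))$ exactly, with no $p$-power. So your proposed mechanism for $p\Lambda(L)\subset\Lambda(L)^{\vee}$ does not work. Showing this containment --- i.e.\ that $L^{(d)}/(L^{(d)})^{\vee}$ is killed by $p$ and not merely by $p^2$ --- is the genuinely delicate step; it does not follow from the filtration $L^{(0)}\subset^1\cdots\subset^1 L^{(d)}$ alone, since successive length-$1$ quotients can assemble into $W_0/p^d$. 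Howard--Pappas handle this by a more careful simultaneous analysis of the ascending chain $L^{(r)}$ and the dual descending chain $(L^{(r)})^{\vee}=\bigcap_{i\le r}\Phi_*^i(L)$.

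By contrast, the point you flag as the ``main obstacle'' --- that each $d_r=1$ until stabilization --- is already settled by your own monotonicity argument: $d_1=1$ and $d_{r+1}\le d_r$ force $d_r\in\{0,1\}$, and once $d_r=0$ the chain is stationary.
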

\begin{proof}
This is proved in \cite[Proposition 5.2.2]{HP17}.
\end{proof}

For simplicity, we put $C=C(L)$. Let $y\in \breve{\calM^{\Diamond}}(\FF)$, we denote by $X_{y}$ the $p$-divisible group at $y$, then we set
\begin{equation*}\label{M-Dieu}
\begin{split}
 &M_{y}=\mathbb{D}(X_{y})(W_{0});\\
& M_{1, y}= \Fil^{1} M_{y}= V M_{y}.\\
 \end{split}
\end{equation*}
We let $D(X_{y})=\mathbb{D}(X_{y})(W_{0})[\frac{1}{p}]$. Recall there is an action of $V_{K_{0}} \rightarrow \End({D}(X_{y}))$ defined in \eqref{V-action}.  Then we put 

\begin{equation}\label{special-lattice}
\begin{split}
L_{y} &=\{z\in V_{K_{0}}: zM_{1, y}\subset M_{1,y}\};\\
L^{\sharp}_{y} & =\{z\in V_{K_{0}}: zM_{y}\subset M_{y}\};\\
L^{\sharp\sharp}_{y} &=\{z\in V_{K_{0}}: zM_{1,y}\subset M_{y}\}.\\
  \end{split}
\end{equation}

\begin{theorem}
For any point $y\in \breve{\calM^{\Diamond}}(\FF)$, the lattice $L_{y}$ is a special lattice in $V_{K_{0}}$. Moreover $L^{\sharp}_{y}=\Phi_{*}(L_{y})$ and $L^{\sharp}_{y}+L_{y}=L^{\sharp\sharp}_{y}$. 
The association $y\rightarrow L_{y}$ gives rise to bijections 
 $$\breve{\calM}(\FF)=\breve{\calM^{\Diamond}}(\FF)/p^{\ZZ}= \{\text{specical lattices }  L  \text{ in } V_{K_{0}}\};$$
 $$\breve{\calM}_{\Lambda}(\FF)=\breve{\calM^{\Diamond}_{\Lambda}}(\FF)/p^{\ZZ}= \{\text{specical lattices }  L  \text{ in } V_{K_{0}}: \Lambda^{\vee}_{W_{0}}\subset L \subset \Lambda_{W_{0}}\}.$$
\end{theorem}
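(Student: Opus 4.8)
The plan is to establish the correspondence $y \mapsto L_y$ by transporting everything through the Clifford-algebra description of the $p$-divisible group $X_y$, following the strategy of Howard--Pappas. First I would recall that for $y \in \breve{\calM}^{\Diamond}(\FF)$, the Dieudonné module $M_y = \mathbb{D}(X_y)(W_0)$ carries, via the embedding $i: G^{\Diamond} \hookrightarrow \GSp(C(L),\psi_\delta)$ and the tensors $(t_\alpha)$, a $G^{\Diamond}$-structure: there is a trivialization $M_y \cong C(L)_{W_0}$ matching $(t_\alpha)$ with $(s_\alpha)$ and the Hodge filtration with a $P_\mu$-torsor, so that the left $C(L)_{W_0}$-module structure identifies $V_{K_0}$ with a subspace of $\End(D(X_y))$ as in \eqref{V-action}. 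Under this identification the quasi-isogeny $\rho$ to $X_0$ becomes an element of $J_b(\QQ_p) = \SO(V^{\Phi}_{K_0})$ well-defined up to the stabilizer of the base point, so without loss of generality I may compare $M_y$ against the standard lattice chain attached to $X_0$.

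Next I would verify that $L_y$ as defined in \eqref{special-lattice} is a self-dual $W_0$-lattice in $V_{K_0}$: self-duality follows because $M_{1,y} = VM_y$ is a Lagrangian (with respect to the polarization form coming from $\psi_\delta$) and the clifford action is compatible with the bilinear form $[\,,\,]$, so the condition $zM_{1,y}\subset M_{1,y}$ is stable under $z \mapsto z^*$ and hence under the quadratic-space involution; combined with the $p$-principality of the polarization this forces $L_y^\vee = L_y$. The defining property $L + \Phi_*(L)/L \cong W_0/pW_0$ is the translation of the statement that the Hodge filtration $M_{1,y}$ has colength-one relative position governed by the miniscule cocharacter $\mu^{\Diamond}$, i.e. $\dim_{\FF}(M_y/(M_{1,y}+FM_y)) $ is controlled by $\mu^\Diamond$ being miniscule; unwinding the Clifford action of $\Phi = b^\Diamond \sigma$ gives exactly $L_y + \Phi_*(L_y)/L_y \cong W_0/pW_0$. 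The relations $L_y^\sharp = \Phi_*(L_y)$ and $L_y^\sharp + L_y = L_y^{\sharp\sharp}$ are then formal: $L_y^\sharp$ is by definition the stabilizer of $M_y$, while $M_y = V^{-1}M_{1,y}$ up to Frobenius twist, so the stabilizer of $M_y$ is the $\Phi$-twist of the stabilizer of $M_{1,y}$, which is $\Phi_*(L_y)$; and the third lattice $L_y^{\sharp\sharp}$ stabilizes a pair and hence is the intersection-dual, giving $L_y^\sharp + L_y = L_y^{\sharp\sharp}$ by a dimension count in $M_y/M_{1,y}$.

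To get the bijection $\breve{\calM}(\FF) = \{\text{special lattices}\}$, I would construct the inverse: given a special lattice $L$, use the relation $L + \Phi_*(L)$ to reconstruct a Hodge-filtered Dieudonné module $(M, M_1, F)$ with $G^\Diamond$-structure, invoking that $B(G^\Diamond,\mu^\Diamond)$ contains the single basic class so any such datum with the prescribed invariants is isomorphic to the one at $b^\Diamond$ after choosing $\rho$; the tensors $(t_\alpha)$ are recovered from the $C(L)$-module structure. Injectivity and surjectivity then follow from the fact that $\breve{\calM}^{\Diamond}$ is, by construction, the functor parametrizing precisely such data, and that two data give the same $\FF$-point of $\breve{\calM} = \breve{\calM}^{\Diamond}/p^{\ZZ}$ iff the special lattices agree (the $p^{\ZZ}$ quotient kills exactly the scaling ambiguity in $\rho$, which does not affect $L_y$ since scaling $M_y$ by $p$ leaves the stabilizer-of-$M_{1,y}$ lattice unchanged). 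Finally, the refinement for $\breve{\calM}_\Lambda$ comes directly from the defining condition \eqref{MLam}: $\rho \circ \Lambda^\vee \circ \rho^{-1} \subset \End(X_y)$ says precisely that $\Lambda^\vee_{W_0}$ acts on $M_y$, equivalently $\Lambda^\vee_{W_0} \subset L_y^\sharp$, and dualizing together with self-duality of $L_y$ this is equivalent to $\Lambda^\vee_{W_0} \subset L_y \subset \Lambda_{W_0}$.

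The main obstacle I anticipate is the careful bookkeeping of covariant versus contravariant Dieudonné theory (the excerpt warns it departs from \cite{HP17} here) in matching the clifford action, the polarization form $\psi_\delta$, and the Frobenius $F$ versus $\Phi = b^\Diamond\sigma$ with correct $p$-powers — in particular verifying that $L_y^\sharp = \Phi_*(L_y)$ rather than $\Phi_*^{-1}(L_y)$, and that self-duality (not some shifted duality) is the right normalization. Everything else is a transcription of Howard--Pappas \cite[\S5]{HP17} into the present setup, citing Proposition \ref{special-vertex} for the stabilization of the chain $L^{(r)}$.
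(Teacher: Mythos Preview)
Your proposal is correct and follows essentially the same approach as the paper: the paper's proof is nothing more than the citation ``This is proved in \cite[Proposition 6.2.2]{HP17} combined with \eqref{ortho-spin},'' and your sketch is precisely an outline of what that citation contains, as you yourself acknowledge in the final paragraph. One small point worth tightening: in the refinement for $\breve{\calM}_{\Lambda}$, the implication from $\Lambda^{\vee}_{W_{0}}\subset L_{y}^{\sharp}$ to $\Lambda^{\vee}_{W_{0}}\subset L_{y}\subset \Lambda_{W_{0}}$ uses not just self-duality but also the $\Phi$-stability of $\Lambda$ (since $L_{y}^{\sharp}=\Phi_{*}(L_{y})$, one applies $\Phi_{*}^{-1}$ and then dualizes), which you use implicitly but do not name.
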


\begin{proof}
This is proved in \cite[Proposition 6.2.2]{HP17} combined with \eqref{ortho-spin}. 
\end{proof}

The schemes $\breve{\calM}_{\Lambda}$ are related to the Deligne-Lusztig varieties. More precisely consider the quadratic space $\Omega=\Lambda_{W_{0}}/ \Lambda_{W_{0}}^{\vee}$ over $\FF$ equipped with Frobenius operator $\Phi$. Let $S_{\Lambda}$ be the closed subscheme of the orthogonal grassmanian $\mathrm{OGr}(\Omega)$  whose $\FF$ points are described by
$$S_{\Lambda}(\FF)=\{\text{Lagrangians }\mathcal{L}\subset \Omega: \dim_{\FF} \mathcal{L}+\Phi(\mathcal{L})=\frac{t_{\Lambda}}{2}+1\}.$$
Sending $\Lambda^{\vee}_{W_{0}}\subset L\subset \Lambda_{W_{0}}$ to $L/\Lambda^{\vee}_{W_{0}}\subset \Lambda_{W_{0}}/ \Lambda^{\vee}_{W_{0}}$ in fact gives rise to a bijection
$$\breve{\calM}_{\Lambda}(\FF)\xrightarrow{\cong} S_{\Lambda}(\FF).$$

We choose a basis $\{e_{1}, e_{2}, \cdots e_{d}, f_{1}, f_{2}\cdots, f_{d}\}$ such that $\mathrm{Span}_{\FF}\{e_{1}, e_{2}, \cdots, e_{d}\}$ and $\mathrm{Span}_{\FF}\{f_{1}, \cdots, f_{d}\}$ are both totally isotropic such that the Frobenius fixes $\{e_{1}, \cdots, e_{d-1}\}$ and $\{f_{1}, \cdots, f_{d-1}\}$ and swaps $e_{d}$ and $f_{d}$.  Let $$\mathcal{F}_{d}^{+}=\mathrm{Span}_{\FF}\{e_{1}, \cdots, e_{d}\}$$ and 
$$\mathcal{F}_{d}^{-}=\mathrm{Span}_{\FF}\{e_{1}, \cdots, e_{d-1}, f_{d}\}.$$
Let $P^{+}$ and $P^{-}$ be the corresponding parabolic subgroups of $\SO(\Omega)$. Then $S_{\Lambda}=X_{P^{+}}(1)\sqcup X_{P^{-}}(1)$
is a disjoint union of generalized Deligne-Lusztig varieties. Note that when $\Omega$ has dimension $2$, then $P^{+}$ and $P^{-}$ are Borel subgroups of $\SO(\Omega)$. And $1$ is a Coxeter element in the Weyl group $W=N(T)/T$ for the torus determined by the chosen basis. In this case both $X_{P^{+}}(1)$ and  $X_{P^{-}}(1)$ consist of a single point, moreover the Frobenius action on $S_{\Lambda}$ swaps these two points. 

 \subsection{Bruhat-Tits strata of type $2$} Let $\Lambda$ be a vertex lattice of type $2$ in $V^{\Phi}_{K_{0}}$. Then we claim those $\Lambda$ are precisely the almost-self-dual lattices in $V^{\Phi}_{K_{0}}$.
 \begin{lemma}
The collection of vertex lattices of type $2$ is in bijection with almost-self-dual lattices in $V^{\Phi}_{K_{0}}$.
 \end{lemma}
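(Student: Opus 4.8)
The plan is to show that the two collections of lattices actually coincide, so that the claimed bijection is simply the identity map; the only substantive point is to prove that for every vertex lattice $\Lambda$ of type $2$ in $V^{\Phi}_{K_{0}}$ the quadratic space $\Lambda/\Lambda^{\vee}$ over $\FF_{p}$ is anisotropic, hence isomorphic to the quadratic extension $\FF_{p^{2}}$ with its norm form. One inclusion is immediate: if $L \subset V^{\Phi}_{K_{0}}$ is almost-self-dual then by definition $L^{\vee}\subset L$ and $L/L^{\vee}$ is a two-dimensional $\FF_{p}$-vector space, hence annihilated by $p$, so $pL\subset L^{\vee}\subset L$ and $\dim_{\FF_{p}} L/L^{\vee}=2$; that is, $L$ is a vertex lattice of type $2$.

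For the reverse inclusion, let $\Lambda$ be a vertex lattice of type $2$. Since $p\Lambda\subset\Lambda^{\vee}$, the form $Q$ takes values in $\tfrac1p\ZZ_{p}$ on $\Lambda$, so $pQ$ reduces to a nondegenerate quadratic form on the two-dimensional $\FF_{p}$-space $\Lambda/\Lambda^{\vee}$, and I must show this form is anisotropic. First I would invoke the Jordan splitting of $\ZZ_{p}$-lattices in quadratic spaces (valid since $p$ is odd) to write an orthogonal decomposition $\Lambda=\Lambda_{0}\perp\Lambda_{1}$ in which $\Lambda_{0}$ is self-dual of rank $n-2$ and $\Lambda_{1}$ has rank $2$ with $\Lambda_{1}^{\vee}=p\Lambda_{1}$; this is exactly the possible shape of a type-$2$ vertex lattice, and it identifies $\Lambda/\Lambda^{\vee}$ with $\Lambda_{1}/\Lambda_{1}^{\vee}$. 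Writing $V_{i}=\Lambda_{i}\otimes_{\ZZ_{p}}\QQ_{p}$, the space $V_{0}$ carries a self-dual lattice, so $\epsilon(V_{0})=1$ and $\det(V_{0})$ is a unit (Theorem \ref{lattice} and the discussion after it); since $\det(V^{\Phi}_{K_{0}})$ is a unit, so is $\det(V_{1})$ modulo squares. Using the formula $\epsilon(V^{\Phi}_{K_{0}})=\epsilon(V_{0})\,\epsilon(V_{1})\,(\det V_{0},\det V_{1})$ together with the triviality of the Hilbert symbol of two units at the odd prime $p$, one obtains $\epsilon(V_{1})=\epsilon(V^{\Phi}_{K_{0}})=-1$.

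It remains to translate $\epsilon(V_{1})=-1$ into the anisotropy of $\Lambda_{1}/\Lambda_{1}^{\vee}$. Choosing an orthogonal basis of $\Lambda_{1}$ realizing $V_{1}\cong\langle a/p,\,b/p\rangle$ with $a,b\in\ZZ_{p}^{\times}$, a short Hilbert-symbol computation gives $\epsilon(V_{1})=(a/p,b/p)_{p}=\big(\tfrac{-ab}{p}\big)$, while the reduced form on $\Lambda_{1}/\Lambda_{1}^{\vee}$ is $\langle\bar a,\bar b\rangle$ over $\FF_{p}$, which is isotropic precisely when $-\bar a\bar b$ is a square, i.e.\ when $\big(\tfrac{-ab}{p}\big)=1$. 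Hence $\epsilon(V^{\Phi}_{K_{0}})=-1$ forces $\Lambda/\Lambda^{\vee}=\Lambda_{1}/\Lambda_{1}^{\vee}$ to be anisotropic, so $\Lambda$ is almost-self-dual. Combining the two inclusions, the vertex lattices of type $2$ are exactly the almost-self-dual lattices, and the asserted bijection is the identity. I expect the only real work to be bookkeeping: fixing the Jordan splitting in the precise normalization above and keeping the Hasse/Hilbert-symbol conventions consistent with the definition of $\epsilon(V)$ from Section $2$; no input beyond Theorem \ref{lattice} and standard local quadratic-form theory is needed.
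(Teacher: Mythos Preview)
Your proof is correct, but it takes a more computational route than the paper's. The paper dispatches the nontrivial direction in one line: if $\Lambda/\Lambda^{\vee}$ had an isotropic line $l$, then its preimage $l+\Lambda^{\vee}\subset\Lambda$ would be a self-dual lattice in $V^{\Phi}_{K_{0}}$ (an isotropic line in a nondegenerate $2$-dimensional space is its own orthogonal complement), contradicting the standing fact that vertex lattices in $V^{\Phi}_{K_{0}}$ have type $\geq 2$, i.e.\ that no self-dual lattice exists because $\epsilon(V^{\Phi}_{K_{0}})=-1$. Your argument instead passes through a Jordan splitting $\Lambda=\Lambda_{0}\perp\Lambda_{1}$, computes $\epsilon(V_{1})=-1$ via the multiplicativity formula for Hasse invariants, and then converts this into anisotropy of the reduced form by an explicit Hilbert-symbol calculation. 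Both approaches ultimately hinge on the same obstruction ($\epsilon=-1$); the paper invokes it once at the level of the ambient space, while you localize it to the rank-$2$ summand and verify everything by hand. Your version has the virtue of being self-contained and making the role of the Hasse invariant explicit, at the cost of more bookkeeping; the paper's version is shorter and more conceptual but leans on the earlier assertion that $t_{\Lambda}\geq 2$.
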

 \begin{proof}
 It is clear that an almost-self-dual lattices in $V^{\Phi}_{K_{0}}$ gives rise to a vertex lattice of type $2$. Conversely if $\Lambda$ is vertex lattice of type $2$, then $\Lambda/\Lambda^{\vee}$ does not have an isotropic line and this implies that $\Lambda/\Lambda^{\vee}$ is isomorphic to the quadratic extension $\FF_{p^{2}}/\FF_{p}$. In fact, if $l\subset \Lambda/\Lambda^{\vee}$ is an isotropic line, then $l+\Lambda^{\vee}$ is a vertex lattice of type $0$ which is a contradiction to $2\leq t_{\Lambda}\leq t_{max}$. This implies that $\Lambda$ is an almost-self-dual lattice.
 \end{proof}
 
\begin{definition}\label{ssp-locus}
We define the superspecial locus $\breve{\calM}_{\ssp}$  of $\breve{\calM}$  as the union of all the vertex strata of type $2$
$$ \breve{\calM}_{\ssp}=\bigcup_{\Lambda, t_{\Lambda}=2} \breve{\calM}_{\Lambda}.$$
\end{definition}

\begin{lemma}
The superspecial locus $\breve{\calM}_{\ssp}$ is defined over $\FF_{p^{2}}$. 
\end{lemma}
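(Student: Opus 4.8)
The plan is to show that the superspecial locus $\breve{\calM}_{\ssp}$, being a union of vertex strata $\breve{\calM}_{\Lambda}$ over all type-$2$ vertex lattices $\Lambda$ in $V^{\Phi}_{K_{0}}$, is stable under the Frobenius descent datum relative to $\FF_{p^{2}}$. Recall that $\breve{\calM}^{\Diamond}$ (hence $\breve{\calM} = \breve{\calM}^{\Diamond}/p^{\ZZ}$) carries a Weil descent datum down to the reflex field, which here is $\QQ_{p}$; concretely this is encoded by an operator (call it $\tau$) on $\FF$-points, compatible with the $\sigma$-semilinear structure, and the claim ``defined over $\FF_{p^{2}}$'' means precisely that $\breve{\calM}_{\ssp}$ is stable under $\tau^{2}$ (equivalently, under the relative Frobenius $\Phi^{2}$ on the index set of lattices, since $V^{\Phi}_{K_{0}}$ already carries the operator $\Phi = b^{\Diamond}\sigma$ and $\Phi$ itself has order exactly $2$ on the relevant lattices by \cite[Proposition 4.25]{HP17} and the structure theory recalled above).

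First I would recall, via the bijection $\breve{\calM}_{\Lambda}(\FF) \xrightarrow{\cong} S_{\Lambda}(\FF)$ and Proposition \ref{special-vertex}, that a point $y \in \breve{\calM}(\FF)$ lies in $\breve{\calM}_{\ssp}$ if and only if the associated special lattice $L_{y}$ satisfies $L_{y}^{(1)} = L_{y}^{(2)}$, i.e. the vertex lattice $\Lambda(L_{y}) = L_{y}^{(1)}$ has type $2$; equivalently $\dim_{\FF}(L_{y} + \Phi_{*}(L_{y})/L_{y}) = 1$ and this already stabilizes. This is an intrinsic condition on the special lattice phrased purely in terms of $\Phi$. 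Then I would track how the descent datum $\tau$ acts on special lattices: the key point is that $\tau$ sends $L_{y}$ to $\Phi_{*}(L_{y})$ (up to the $p^{\ZZ}$-action), since on isocrystals the Weil descent datum for the unramified local Shimura datum is implemented by the Frobenius $F$ itself. Therefore $\tau^{2}$ sends $L_{y}$ to $\Phi_{*}^{2}(L_{y}) = \Phi_{*}(\Phi_{*}(L_{y}))$.

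The substantive step is then to check that the stabilization index $d$ with $L^{(d)} = L^{(d+1)}$ is unchanged when $L$ is replaced by $\Phi_{*}(L)$ (or $\Phi_{*}^{2}(L)$). This is immediate: $\Phi_{*}(L)^{(r)} = \Phi_{*}(L) + \Phi_{*}^{2}(L) + \cdots + \Phi_{*}^{r+1}(L) = \Phi_{*}(L^{(r)})$ for the stabilized range, so $L^{(d)} = L^{(d+1)}$ if and only if $\Phi_{*}(L)^{(d)} = \Phi_{*}(L)^{(d+1)}$, and $\Phi_{*}$ is a bijection on lattices preserving types (it preserves the quadratic form up to the $\sigma$-twist and hence $\dim \Lambda/\Lambda^{\vee}$). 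In particular the type of $\Lambda(L)$ equals the type of $\Lambda(\Phi_{*}(L))$, so $\breve{\calM}_{\ssp}$ is stable under $\tau$ — in fact already over $\FF_{p}$ as a subset of $\FF$-points — and a fortiori defines a closed subscheme over $\FF_{p^{2}}$. (The reason one only claims $\FF_{p^{2}}$ rather than $\FF_{p}$ is that the descent datum may interchange the two Deligne–Lusztig points $X_{P^{+}}(1)$ and $X_{P^{-}}(1)$ inside each $S_{\Lambda}$ with $t_\Lambda = 2$, as noted after the theorem above, so that $\tau$ does not fix each point individually but $\tau^{2}$ does; one records that the individual vertex strata of type $2$, each a pair of reduced points, are $\FF_{p^{2}}$-rational.)

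I would finish by phrasing the conclusion at the level of schemes rather than points: $\breve{\calM}_{\ssp}$ is a finite union of the closed formal subschemes $\breve{\calM}_{\Lambda}$, each of which is (by Howard–Pappas) a projective scheme over $\FF$; the Galois action on the finite set of type-$2$ vertex lattices $\Lambda \subset V^{\Phi}_{K_{0}}$ factors through $\Gal(\FF_{p^{2}}/\FF_{p})$ because $\Phi^{2}$ acts trivially on this set — each such $\Lambda$ is $\Phi^{2}$-stable since $\Phi \Lambda$ is again a type-$2$ vertex lattice with $\Phi^2 \Lambda = \Lambda$ (here one uses that $\Lambda$, being almost self-dual, is determined by the $\Phi$-stable quadratic space it spans, and $\Phi$ has order $2$ on $V^{\Phi}_{K_{0}}$ modulo scalars). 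Hence the union is Galois-stable over $\FF_{p^{2}}$ and descends. The main obstacle is bookkeeping: pinning down precisely how the Weil descent datum on $\breve{\calM}^{\Diamond}$ translates into the operator $\Phi_{*}$ on special lattices and checking the index-$p^{\ZZ}$ quotient does not interfere; once that dictionary is in place the type-preservation of $\Phi_{*}$ makes the statement formal.
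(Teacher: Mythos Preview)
Your argument is essentially correct but takes a different route from the paper, and your final paragraph contains a confusion worth flagging.

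The paper argues pointwise and directly: for a type-$2$ vertex lattice $\Lambda$ and a special lattice $L$ with $\Lambda^{\vee}_{W_{0}} \subset^{1} L \subset^{1} \Lambda_{W_{0}}$, the image $L/\Lambda^{\vee}_{W_{0}}$ is one of the two isotropic lines in the $2$-dimensional quadratic space $\Lambda_{W_{0}}/\Lambda^{\vee}_{W_{0}}$. One checks that $\Phi_{*}(L)$ is again special and lies in the same chain, while $L \neq \Phi_{*}(L)$ (a special lattice is never $\Phi$-stable); pigeonhole then forces $\Phi^{2}_{*}(L) = L$, so each point is individually $\FF_{p^{2}}$-rational. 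Your approach is more structural: you show $\Phi_{*}$ preserves the stabilization index via $\Phi_{*}(L)^{(r)} = \Phi_{*}(L^{(r)})$, hence preserves the type of $\Lambda(L)$, so $\breve{\calM}_{\ssp}$ is already $\tau$-stable as a locus (indeed descends to $\FF_{p}$ as a scheme); the pointwise $\FF_{p^{2}}$-rationality you then extract from the Deligne--Lusztig picture (two points swapped by Frobenius). The paper's argument is more self-contained and gives the two-point count as a byproduct; yours separates the scheme-level descent from the rationality of individual points and would adapt more readily to other minimal EO strata.

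The confusion is your last paragraph. You write that ``each such $\Lambda$ is $\Phi^{2}$-stable since $\Phi\Lambda$ is again a type-$2$ vertex lattice with $\Phi^{2}\Lambda = \Lambda$'' and that ``$\Phi$ has order $2$ on $V^{\Phi}_{K_{0}}$ modulo scalars.'' But vertex lattices $\Lambda$ are $\ZZ_{p}$-lattices in $V^{\Phi}_{K_{0}}$, the $\Phi$-\emph{fixed} subspace of $V_{K_{0}}$, so $\Phi$ acts as the identity on $\Lambda$ and there is nothing to prove there. The genuine content concerns the special lattice $L \subset V_{K_{0}}$, which is a $W_{0}$-lattice that is \emph{never} $\Phi$-stable; the question is whether $\Phi^{2}_{*}(L) = L$. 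Your earlier reference to the two Deligne--Lusztig points being swapped already supplies this, so the last paragraph can simply be deleted.
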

\begin{proof}
Recall  that $$\breve{\calM}_{\Lambda}(\FF)=\breve{\calM^{\Diamond}_{\Lambda}}(\FF)/p^{\ZZ}= \{\text{specical lattices }  L  \text{ in } V_{K_{0}}: \Lambda^{\vee}_{W_{0}}\subset L \subset \Lambda_{W_{0}}\}.$$ Let $\Lambda$ be vertex lattice of type $2$. Let $y\in \breve{\calM}_{\Lambda}$ be a superspecial point. Since $L=L_{y}$ is by definition self-dual and $\Lambda$ is almost-self-dual, we have $$\Lambda^{\vee}_{W_{0}}\subset^{1} L\subset^{1}\Lambda_{W_{0}}.$$  In particular, $L/\Lambda^{\vee}_{W_{0}}$ gives rise to an isotropic line in the two dimensional quadratic space $\Lambda_{W_{0}}/\Lambda^{\vee}_{W_{0}}$. It is well known that there are two such isotropic lines in  $\Lambda_{W_{0}}/\Lambda^{\vee}_{W_{0}}$.  Using the fact that $\Lambda$ is $\Phi$-invariant, we have also $\Lambda^{\vee}_{W_{0}}\subset^{1} \Phi_{*}(L)\subset^{1}\Lambda_{W_{0}}.$ Moreover since $\Phi_{*}(L)+\Phi^{2}_{*}(L)=L+\Phi_{*}(L)$ and $\dim L+\Phi_{*}(L)/\Phi_{*}(L)=1$, $\Phi_{*}(L)$ is also special and thus belong to $\breve{\calM}_{\Lambda}(\FF)$. Note that $\Phi^{2}_{*}(L)$ is also special lattice and $\Phi^{2}(L)\subset^{1}\Phi_{*}(L)+\Phi^{2}_{*}(L)=L+\Phi_{*}(L)$. We conclude that $L=\Phi^{2}_{*}(L)$ or $L=\Phi_{*}(L)$. Since $L$ is special and thus not $\Phi$-invariant, $L=\Phi^{2}_{*}(L)$. It then follows that $L/\Lambda^{\vee}_{W_{0}}$ is fixed by $\Phi^{2}$ and therefore $\breve{\calM}_{\ssp}$ is defined over $\FF_{p^{2}}$. 
\end{proof}

Recall we have the bijection $$\breve{\calM}_{\Lambda}(\FF)\xrightarrow{\cong}S_{\Lambda}(\FF)=\{\text{Lagrangians subspace  } \mathcal{L}\subset \Omega: \dim_{\FF}\mathcal{L}+\Phi(\mathcal{L})=2\}.$$ The above Lemma shows that there are two special lattices $L$ and $\Phi_{*}(L)$ in $\breve{\calM}_{\Lambda}$.  Correspondingly there are only two points in $S_{\Lambda}(\FF)$ given by $L/\Lambda^{\vee}_{W_{0}}$ and $\Phi_{*}(L)/\Lambda^{\vee}_{W_{0}}$.  We will write $\breve{\calM}_{\Lambda}(\FF) =\{x_{\circ}, x_{\bullet}\}$ with $L_{x_{\circ}}=\Phi_{*}(L_{x_{\bullet}})$ and $L_{x_{\bullet}}=\Phi_{*}(L_{x_{\circ}})$. 

\subsection{Parametrization of the superspecial locus}

Now we are ready to relate $\breve{\calM}_{\ssp}$ to the homogeneous space $Z$ introduced in \S 2.3.  We first point out why the space $\breve{\mathcal{M}}_{\ssp}$ introduced in Definition \ref{ssp-locus} deserves to be called the superspecial locus. 
\begin{definition}
Let $X$ be a $p$-divisible group and $\mathbb{D}(X)(W_{0})$ be its covariant {\Dieu} module. Then we call $X$ superspecial if $$F^{2}\mathbb{D}(X)(W_{0})=p\mathbb{D}(X)(W_{0}).$$ 
\end{definition}

\begin{lemma}
The set $\breve{\calM}_{\ssp}(\FF)$ consists of those points $y\in\breve{\calM}(\FF)$ such that the $p$-divisible group $X_{y}$ associated to $y$ is superspecial. 
\end{lemma}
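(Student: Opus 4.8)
The plan is to translate the condition "$X_y$ is superspecial" directly into a condition on the special lattice $L = L_y$ via the dictionary of \eqref{special-lattice}, and then show that this condition is equivalent to $y$ lying in some vertex stratum of type $2$. Recall from the theorem preceding this lemma that $L^\sharp_y = \Phi_*(L_y)$, so the Frobenius $F$ on $M_y = \mathbb{D}(X_y)(W_0)$ corresponds, through the action $V_{K_0}\to \End(D(X_y))$, to the operator $\Phi = b^\Diamond\sigma$ on $V_{K_0}$; more precisely the three lattices $L_y$, $L^\sharp_y$, $L^{\sharp\sharp}_y$ record the compatibility of $M_y$ and $M_{1,y} = VM_y$ with the $V_{K_0}$-action. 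First I would unwind the superspecial condition $F^2 M_y = pM_y$ in terms of the special lattice. The key point is that $F$ (covariantly) acts on the Dieudonné module in a way that mirrors $\Phi_*$ on special lattices: one has $\Phi_*(L_y) = L^\sharp_y$ and the chain $L_y \subset L^{(1)}_y \subset \cdots$ from Proposition \ref{special-vertex} stabilizes at $L^{(d)}_y = \Lambda(L_y)_{W_0}$ for a vertex lattice $\Lambda(L_y)$ of type $2d$. I claim $X_y$ superspecial is equivalent to $d=1$, i.e. to $L_y + \Phi_*(L_y) = L^{(1)}_y$ being $\Phi_*$-stable, which is precisely the statement that $L^{(1)}_y = \Lambda(L_y)_{W_0}$ with $\Lambda(L_y)$ of type $2$.

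The main work is the equivalence "$F^2 M_y = pM_y \iff d=1$." For the forward direction: if $F^2 M_y = pM_y$, then $M_y$ is a superspecial Dieudonné module, so its isocrystal is isoclinic of slope $1/2$ and, more importantly, $F M_y$ and $p F^{-1} M_y$ sandwich $M_y$ tightly; translating through \eqref{special-lattice}, the lattices $L_y$ and $\Phi_*(L_y)$ must satisfy $\Phi^2_*(L_y) = L_y$ (as was essentially extracted in the proof of the lemma that $\breve{\calM}_{\ssp}$ is defined over $\FF_{p^2}$, where exactly this identity $L = \Phi^2_*(L)$ was derived once one knows $\Lambda$ has type $2$). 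Then $L^{(1)}_y = L_y + \Phi_*(L_y)$ is stable under $\Phi_*$, forcing $d = 1$ in Proposition \ref{special-vertex}. For the converse: if $d=1$, then $\Lambda = \Lambda(L_y)$ has type $2$, hence is almost-self-dual, so $\Lambda^\vee_{W_0} \subset^1 L_y \subset^1 \Lambda_{W_0}$ and likewise for $\Phi_*(L_y)$; running the argument in the "defined over $\FF_{p^2}$" lemma gives $L_y = \Phi^2_*(L_y)$, which upon transport back to $M_y$ yields $F^2 M_y = pM_y$. I would set this converse up carefully because it is the direction that shows $\breve{\calM}_{\ssp}(\FF) \subseteq \{y : X_y \text{ superspecial}\}$.

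Assembling: $y \in \breve{\calM}_{\ssp}(\FF)$ means $y \in \breve{\calM}_\Lambda(\FF)$ for some vertex lattice $\Lambda$ of type $2$, equivalently $\Lambda^\vee_{W_0}\subset L_y \subset \Lambda_{W_0}$ for such a $\Lambda$, equivalently (by Proposition \ref{special-vertex} and the fact that $\Lambda(L_y)$ is the \emph{unique} vertex lattice minimal over $L_y$ of its type, while any vertex lattice containing $L_y$ has type $\geq t_{\Lambda(L_y)}$) the invariant $d$ attached to $L_y$ equals $1$, equivalently $F^2 M_y = pM_y$, i.e. $X_y$ is superspecial. I expect the main obstacle to be the precise bookkeeping in the step that relates $F$ on $M_y$ to $\Phi_*$ on $L_y$ — in particular making sure the colength/duality conventions (covariant Dieudonné theory, the $\sharp$ and $\sharp\sharp$ lattices, the identity $L^\sharp_y = \Phi_*(L_y)$) all line up so that "$F^2 = p$ on $M_y$" really is the same as "$\Phi^2_* = \mathrm{id}$ on $L_y$" and hence the same as $d=1$. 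Once that translation is pinned down, both inclusions follow from arguments already rehearsed in the preceding two lemmas.
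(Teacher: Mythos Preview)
Your proposal is correct and follows essentially the same route as the paper: both arguments hinge on the equivalence ``$X_y$ superspecial $\iff \Phi^2_*(L^\sharp_y)=L^\sharp_y$ (equivalently $\Phi^2_*(L_y)=L_y$) $\iff d=1$ in Proposition~\ref{special-vertex}.'' The paper carries out the translation you flag as the main obstacle by writing $\Phi^2(L^\sharp_y)=\{z\in V_{K_0}: zF^2M_y\subset F^2M_y\}$ directly from $\Phi(z)=F\circ z\circ F^{-1}$, so that $\Phi^2(L^\sharp_y)=L^\sharp_y$ forces $F^2M_y=p^mM_y$ for some $m$, and then invokes the slope-$1/2$ condition to pin down $m=1$; you should make this slope step explicit in your converse direction, as it is what upgrades ``$F^2M_y$ and $M_y$ have the same stabilizer in $V_{K_0}$'' to the precise equality $F^2M_y=pM_y$.
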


\begin{proof}
Consider the special lattice $L^{\sharp}_{y} =\{z\in V_{K_{0}}: zM_{y}\subset M_{y}\}$.  Suppose that $y\in \breve{\calM}_{ssp}(\FF)$, then $\Phi^{2}(L^{\sharp}_{y})=L^{\sharp}_{y}$. We have the relation $\Phi(z)=F\circ z\circ F^{-1}$ for $z\in V_{K_{0}}$ in $\End(M_{y})$. This implies by definition that 
$\Phi^{2}(L^{\sharp}_{y}) =\{z\in V_{K_{0}}: zF^{2}M_{y}\subset F^{2}M_{y}\}$. On the other hand, $\Phi^{2}(L^{\sharp}_{y})=L^{\sharp}_{y}$ implies that $\Phi^{2}(L^{\sharp}_{y}) =\{z\in V_{K_{0}}: zM_{y}\subset M_{y}\}$. This implies that $F^{2}M_{y}=p^{m}M_{y}$ for some $m\geq0$. But we know $X_{y}$ has slope $1/2$. This implies that $m=1$. This shows that for $y\in \breve{\calM}_{\ssp}(\FF)$, $X_{y}$ is superspecial.

The above argument also shows that $X_{y}$ is superspecial if and only if $\Phi^{2}(L^{\sharp}_{y})=L^{\sharp}_{y}$. Notice by Proposition \ref{special-vertex},  $\Phi^{2}(L^{\sharp}_{y})=L^{\sharp}_{y}$ if and only if $y\in \breve{\calM}_{\ssp}(\FF)$. This finishes the proof.
\end{proof}

\begin{remark}
By the previous lemma, the superspecial locus $\breve{\calM}_{\ssp}$ of $\breve{\calM}$ is the image of the restriction of the usual superspecial locus of $\breve{\calM}_{\GSp(C(L))}$ to the $\breve{\mathcal{M}}^{\Diamond}$. See the diagram in \eqref{spin-ortho-diag}.
\end{remark}

\begin{theorem}\label{thmZ}
There is a bijection between $\breve{\calM}_{\ssp}(\FF)$ and the homogeneous space
$$Z=\SO(V^{\Phi}_{K_{0}})/\SO(\Lambda_{0}, \pm) $$
where $\Lambda_{0}$ is a fixed type $2$ vertex lattice.
\end{theorem}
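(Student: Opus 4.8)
The plan is to exhibit a transitive action of $\SO(V^{\Phi}_{K_{0}})$ on the set $\breve{\calM}_{\ssp}(\FF)$ and then identify the stabilizer of a chosen point with $\SO(\Lambda_{0}, \pm)$. First I would note that, by Definition \ref{ssp-locus} together with the preceding lemmas, $\breve{\calM}_{\ssp}(\FF)$ is in bijection with the set of pairs $(\Lambda, x)$ where $\Lambda$ is a type $2$ vertex lattice in $V^{\Phi}_{K_{0}}$ (equivalently, an almost-self-dual lattice) and $x \in \breve{\calM}_{\Lambda}(\FF) = \{x_{\circ}, x_{\bullet}\}$ is one of the two points of the associated vertex stratum. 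Under the bijection $\breve{\calM}_{\Lambda}(\FF) \xrightarrow{\cong} S_{\Lambda}(\FF)$, the two points correspond precisely to the two isotropic lines in the two-dimensional quadratic space $\Omega = \Lambda_{W_{0}}/\Lambda^{\vee}_{W_{0}}$, i.e. to the two orientations of $\Omega$ (equivalently of $\Lambda/\Lambda^{\vee}$ over $\FF_{p}$, base-changed). So the data of a point of $\breve{\calM}_{\ssp}(\FF)$ is exactly the data of an oriented almost-self-dual lattice in $V^{\Phi}_{K_{0}}$.

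Next I would establish transitivity. The group $\SO(V^{\Phi}_{K_{0}})$ acts on almost-self-dual lattices, and by Theorem \ref{lattice} (the case $\epsilon = -1$) this action is transitive; it remains to check that the induced action on orientations is transitive on the two-element set, or rather that the stabilizer of an unoriented lattice surjects onto its orientations. This follows because $\SO(\Lambda)$ — the stabilizer of $\Lambda$ in $\SO(V^{\Phi}_{K_{0}})$ — acts on $\Lambda/\Lambda^{\vee} \cong \FF_{p^{2}}/\FF_{p}$ through the reduction map to $\SO(\Lambda/\Lambda^{\vee})(\FF_{p})$, and this target group (the norm-one torus of $\FF_{p^2}/\FF_p$ is the identity component, with a reflection giving the full orthogonal group) contains an element of determinant $-1$ swapping the two isotropic lines; one must verify the reduction map $\SO(\Lambda) \to \mathrm{O}(\Lambda/\Lambda^{\vee})(\FF_p)$ hits such an element, which is exactly the statement that $\SO(\Lambda,\pm)$ has index $2$ in $\SO(\Lambda)$. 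Granting this, $\SO(V^{\Phi}_{K_{0}})$ acts transitively on $\breve{\calM}_{\ssp}(\FF)$, and the stabilizer of the point corresponding to $(\Lambda_{0}, \text{orientation})$ is by construction the subgroup of $\SO(\Lambda_{0})$ preserving the chosen orientation on $\Lambda_0/\Lambda_0^{\vee}$, namely $\SO(\Lambda_{0}, \pm)$. This yields the bijection $\breve{\calM}_{\ssp}(\FF) \xrightarrow{\sim} \SO(V^{\Phi}_{K_{0}})/\SO(\Lambda_{0},\pm) = Z$.

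The one genuinely delicate point to address is the passage between $V^{\Phi}_{K_{0}}$-lattices and $W_0$-lattices: the special lattices $L$ live in $V_{K_0}$, the vertex lattices $\Lambda$ live in $V^{\Phi}_{K_{0}} \subset V_{K_0}$, and the identification $\breve{\calM}_{\Lambda}(\FF) \cong S_{\Lambda}(\FF)$ passes through $\Lambda_{W_{0}}/\Lambda^{\vee}_{W_{0}}$. I would check compatibly that the $\Phi$-fixed points of the two isotropic lines in $\Lambda_{W_0}/\Lambda^{\vee}_{W_0}$ are the base changes of the two isotropic lines of $\Lambda/\Lambda^{\vee}$ over $\FF_p$ — this is where the earlier lemma showing $L = \Phi^2_*(L)$ and the $\FF_{p^2}$-rationality of $\breve{\calM}_{\ssp}$ enter — so that an orientation of the $\FF_p$-quadratic space $\Lambda/\Lambda^{\vee}$ (in the sense used to define $\SO(\Lambda_0,\pm)$) corresponds bijectively to a choice of one of the two points $x_\circ, x_\bullet$. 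I expect this bookkeeping — keeping the $\SO(V^{\Phi}_{K_0})$-action, the $\Phi$-action, and the two distinct lattice sites all consistent — to be the main obstacle; once it is in place, transitivity and the stabilizer computation are formal consequences of Theorem \ref{lattice} and the definition of $\SO(\Lambda_0,\pm)$.
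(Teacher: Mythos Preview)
Your strategy coincides with the paper's: identify $\breve{\calM}_{\ssp}(\FF)$ with the set of oriented almost-self-dual lattices in $V^{\Phi}_{K_0}$, invoke Theorem~\ref{lattice} for transitivity of $\SO(V^{\Phi}_{K_0})$ on the underlying lattices, show that the stabilizer $\SO(\Lambda)$ of a fixed $\Lambda$ still acts transitively on its two orientations, and read off the point-stabilizer as $\SO(\Lambda_0,\pm)$ via Lemma~\ref{Orientation}. The only substantive difference is in the middle step. You phrase it as asking that the reduction $\SO(\Lambda)\to\mathrm{O}(\Lambda/\Lambda^{\vee})(\FF_p)$ hit a determinant-$(-1)$ element, and then grant it; the paper instead works on the $\GSpin$ side, producing $\tilde g\in\GSpin(V^{\Phi}_{K_0})$ stabilizing $\Lambda$ with $\ord_p\bigl(\eta_{G^{\Diamond}}(\tilde g)\bigr)=1$, and uses that such $\tilde g$ swaps $x_\circ,x_\bullet$ in $\breve{\calM}^{\Diamond}_{\Lambda}(\FF)/p^{\ZZ}=\breve{\calM}_{\Lambda}(\FF)$. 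These are two views of the same construction: the $\SO$-image of such a $\tilde g$ is a product of reflections $r_vr_w$ with $\ord_p\bigl(Q(v)Q(w)\bigr)$ odd, and conversely you can fill your ``granting this'' directly by taking any $v\in\Lambda\smallsetminus\Lambda^{\vee}$ (so $pQ(v)\in\ZZ_p^{\times}$ by anisotropy of $\Lambda/\Lambda^{\vee}$) and any $w\in\Lambda^{\vee}$ with $Q(w)\in\ZZ_p^{\times}$; then $r_vr_w\in\SO(V^{\Phi}_{K_0})$ preserves $\Lambda$ and reduces to the reflection $r_{\bar v}$ on $\Lambda/\Lambda^{\vee}$, which swaps the two isotropic lines. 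The compatibility bookkeeping you flag at the end is exactly the content of the lemma immediately preceding the theorem together with Lemma~\ref{Orientation}, and the paper uses it in the same way for the stabilizer computation.
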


\begin{proof}
One can find an element $g\in \SO(V^{\Phi}_{K_{0}})$ such that its lift $\tilde{g}\in \GSpin(V^{\Phi}_{K_{0}})$ whose similitude satisfies $\ord_{p}(\eta(\tilde{g}))=1$. Then it follows that $\tilde{g}$ swaps $x_{\circ}$ and $x_{\bullet}$ in $\breve{\calM}^{\Diamond}_{\Lambda}(\FF)/p^{\ZZ}$ but then $g$ swaps  $x_{\circ}$ and $x_{\bullet}$ in $\breve{\calM}_{\Lambda}(\FF)$ .  This reasoning along with the fact that $\SO(V^{\Phi}_{K_{0}})$ acts transitively on the set of almost-self-dual lattices in $V^{\Phi}_{K_{0}}$ implies that $\SO(V^{\Phi}_{K_{0}})$  acts transitively on the set $\breve{\calM}_{\ssp}(\FF)$.  

Recall that  giving an orientation on $\Lambda/\Lambda^{\vee}$ is the same as choosing an isotropic line in $\Lambda_{W_{0}}/\Lambda_{W_{0}}^{\vee}$ which in turn is equivalent to give special lattice $L$ such that $\Lambda^{\vee}_{W_{0}}\subset^{1} L\subset ^{1}\Lambda_{W_{0}}$ which is the same as choosing a point from the two points in  $\breve{\calM}_{\Lambda}(\FF)$. Thus the stabilizer of a point in $\breve{\calM}_{\ssp}(\FF)$ in $\SO(V^{\Phi}_{K_{0}})$ is isomorphic to $\SO(\Lambda,\pm)$. This finishes the proof.
\end{proof}

\section{Comparison with Affine Deligne-Lusztig varieties}

In this section we would like indicate how the results proved in Theorem \ref{thmZ} can be extended to cover all the Coxeter type Shimura varieties.  The Coxeter type Shimura varieties are introduced by G\"{o}rtz and He \cite{GH15} in terms of the affine Deligne-Lusztig varieties. In the following we will review their theory and we abbreviate affine Deligne-Lusztig variety by ADLV.
\subsection{Affine Deligne Lusztig variety} Let $F$ be a finite extension of $\QQ_{p}$ and $\breve{F}$ be the completion of the maximal unramified extension of $F$. Let $G$ be a connected reductive group over $F$ and we write $\breve{G}$ its base change to $\breve{F}$. Then $\breve{G}$ is quasi split and we choose a maximal split torus $S$ and denote by $T$ its centralizer.  We know $T$ is a maximal torus and we denote by $N$ its normalizer. The relative Weyl group is defined to be $W_{0}=N(\breve{F})/ T(\breve{F})$. Let $\Gamma$ be the Galois group of $\breve{F}$ and we have the following Kottwitz homomorphism:
$$\kappa_{G}: G(\breve{F})\rightarrow X_{*}(\breve{G})_{\Gamma}.$$
Denote by $\widetilde{W}$ the Iwahori Weyl group of $\breve{G}$ which is by definition $\widetilde{W}=N(\breve{F})/ T(\breve{F})_{1}$ where $T(\breve{F})_{1}$ is the kernel of the Kottwitz homomorphism for $T(\breve{F})$. Inside the Iwahori Weyl group $\widetilde{W}$, there is a copy of the affine Weyl group $W_{a}$ which can be identified with $N(\breve{F})\cap G(\breve{F})_{1}/ T(\breve{F})_{1}$ where $G(\breve{F})_{1}$ is the kernel of the Kottwitz morphism for $G$. The group $\tilde{W}$ is not quite a Coxeter group while $W_{a}$ is generated by the affine reflections denoted by $\tilde{\mathbb{S}}$ and $(\widetilde{W}, \tilde{\mathbb{S}})$ form a Coxeter system. We in fact have $\widetilde{W}= W_{a}\rtimes \Omega$ where $\Omega$ is the normalizer of a fixed base alcove and more canonically $\Omega=X_{*}(T)_{\Gamma}/ X_{*}(T_{sc})_{\Gamma}$ where $T_{sc}$ is the preimage of $T\cap G_{der}$ in the simply connected cover $G_{sc}$.

Let $\mu\in X_{*}(T)$ be a minuscule cocharacter of $G$ over $\breve{F}$ and $\lambda$ its image in $X_{*}(T)_{\Gamma}$.  We denote by $\tau$ the projection of $\lambda$ in $\Omega$. The \emph{admissible subset} of $\widetilde{W}$ is defined to be
$$\Adm(\mu)=\{w\in \widetilde{W}; w \leq x(\lambda) \text{ for some }x\in W_{0} \}.$$
Here $\lambda$ is considered as a translation element in $\widetilde{W}$. Let $K\subset\tilde{\mathbb{S}}$ and $\breve{K}$ its corresponding parahoric subgroup. Let $\widetilde{W}_{K}$ be the subgroup defined by $N(\breve{F})\cap \breve{K}/T(\breve{F})_{1}$. We have the following decomposition $\breve{K}\backslash G(\breve{F})/\breve{K}= \widetilde{W}_{K}\backslash \widetilde{W}/ \widetilde{W}_{K}$. Therefore we can define a relative position map 
$$\inv: G(\breve{F})/\breve{K}\times G(\breve{F})/\breve{K}\rightarrow \widetilde{W}_{K}\backslash \widetilde{W}/ \widetilde{W}_{K}.$$

For $w\in \widetilde{W}_{K}\backslash \widetilde{W}/ \widetilde{W}_{K}$ and $b\in G(\breve{F})$, we define the \emph{affine Deligne-Lusztig variety}
to be the set 
$$X_{w}(b)=\{g\in G(\breve{F})/\breve{K}; \inv(g, b\sigma(g))=w\}.$$
Thanks to the work of \cite{BS-Inv17} and \cite{Zhu-Ann17}, this set can be viewed as an ind-closed-subscheme in the affine flag variety $\breve{G}/\breve{K}$. In this note, we only consider it as a set. The Rapoport-Zink space is not directly related to the affine Deligne-Lusztig variety but rather to the following union of affine Deligne-Lusztig varieties
$$X(\mu, b)_{K}=\{g\in G(\breve{F})/ \breve{K}; g^{-1}b\sigma(g)\in \breve{K}w\breve{K}, w\in \Adm(\mu) \}.$$
We recall the group $J_{b}$ is defined by the $\sigma$-centralizer of $b$ that is $$J_{b}(R)=\{g\in G(R\otimes_{F}\breve{F}); g^{-1}b\sigma(g)=b\}$$ for any $F$-algebra $R$. In the following we will assume that $b$ is basic and in this case $J_{b}$ is an inner form of $G$.

\subsection{Coxeter type ADLV}We define $\Adm^{K}(\mu)$ to be the image of  $\Adm(\mu)$ in $\widetilde{W}_{K}\backslash\widetilde{W}/\widetilde{W}_{K}$ and $^{K}\widetilde{W}$ to be the set of elements of minimal length in $\widetilde{W}_{K}\backslash\widetilde{W}$.  We define the set $\mathrm{EO}^{K}(\mu)=\Adm^{K}(\mu)\cap ^{K}\widetilde{W}$. For $w\in W_{a}$, we set 
$$\mathrm{supp}_{\sigma}(w\tau)=\bigcup_{n\in \ZZ}(\tau\sigma)^{n}(\mathrm{supp}(w)).$$
If the length $l(w)$ of $w$ agrees with the cardinality of $\mathrm{supp}_{\sigma}(w\tau)/\langle\tau\sigma\rangle$, we say $w\tau$ is a $\sigma$-Coxeter element. We denote by $\mathrm{EO}^{K}_{\sigma,\mathrm{cox}}(\mu)$ the subset of $\mathrm{EO}^{K}(\mu)$ such that $w$ is a $\sigma$-Coxeter element and $\mathrm{supp}_{\sigma}(w)$ is not $\widetilde{\mathbb{S}}$. A \emph{$K$-stable piece} is a subset of $G(\breve{F})$ of the form $\breve{K}\cdot_{\sigma}\breve{I}w\breve{I}$ where $\cdot_{\sigma}$ means $\sigma$-conjugation and $\breve{I}$ is an Iwahori subgroup and $w\in {^{K}\widetilde{W}}$. Then we define the Ekedahl-Oort stratum attached to $w\in \mathrm{EO}^{K}(\mu)$ of $X(\mu, b)_{K}$ by the set $$X_{K,w}(b)=\{g\in G(\breve{F})/\breve{K}; g^{-1}b\sigma(g)\in  \breve{K}\cdot_{\sigma}\breve{I}w\breve{I}\}.$$  Then by \cite{GH15} we have the following EO-stratification 
\begin{equation}X(\mu, b)_{K}=\bigcup_{w\in\mathrm{EO}^{K}(\mu)}X_{K,w}(b).\end{equation} 
The case when 
\begin{equation}\label{ADLV-EO}X(\mu, b)_{K}=\bigcup_{w\in\mathrm{EO}^{K}_{\sigma,\mathrm{cox}}(\mu)}X_{K,w}(b)\end{equation}
is particular interesting and when this happens we say the datum $(G, \mu, K)$ is of Coxeter type. The datum $(G, \mu, K)$ being Coxeter type or not depends only on the associated datum $(\widetilde{W}, \lambda, K, \sigma)$ where $\lambda$ is the image of $\mu\in X_{*}(T)_{\Gamma}$ and $\sigma$ is the induced automorphism of the Frobenius $\sigma$ on the local Dynkin diagram. The set of $(G, \mu, K)$ is classified in \cite[Theorem 5.11]{GH15}. This includes the orthogonal case we studied in the previous sections.
\begin{itemize}
\item The odd orthogonal case  corresponds to $G=\SO_{n}$ and the datum $$(\widetilde{B}_{m}, \omega^{\vee}_{1}, \mathbb{S}, id )$$ with $n=2m+1$. The affine Dynkin diagram is given by
\begin{displaymath}
  \xymatrix{ &\underset{0}\bullet \ar@{-}[dr]& \\
                  &&\underset{2}\bullet \ar@{-}[r] &\underset{2}\bullet &\underset{3}\bullet\ar@{-}[l] &\cdots\ar@{-}[l]&\underset{n-1}\bullet\ar@{=>}[r]&\underset{n}\bullet\\
                  &\underset{1}\bullet\ar@{-}[ur]}\\
\end{displaymath}
with Frobenius acts trivially on the diagram.

\item The even orthogonal case with $\det\neq (-1)^{m}$ corresponds to $G=\SO_{n}$ and the datum $$(\widetilde{D}_{m}, \omega^{\vee}_{1}, {\mathbb{S}}, id)$$
with $n=2m$.
\begin{displaymath}
  \xymatrix{ &\underset{0}\bullet \ar@{-}[dr]& & & & &\underset{n}\bullet \ar@{-}[dl]\\
                  &&\underset{2}\bullet \ar@{-}[r] &\underset{2}\bullet &\cdots\ar@{-}[l] &\underset{n-2}\bullet\ar@{-}[l]\\
                  &\underset{1}\bullet\ar@{-}[ur] & & & & &\underset{n-1}\bullet \ar@{-}[ul]\\}
\end{displaymath}
with Frobenius acts trivially on the diagram.

\item The even orthogonal case with $\det= (-1)^{m}$ corresponds to $G=\SO_{n}$ and the datum $$(\widetilde{D}_{m}, \omega^{\vee}_{1}, {\mathbb{S}}, \sigma)$$
with $n=2m$.
\begin{displaymath}
  \xymatrix{ &\underset{0}\bullet \ar@{-}[dr]& & & & &\underset{n}\bullet \ar@{-}[dl]\\
                  &&\underset{2}\bullet \ar@{-}[r] &\underset{2}\bullet &\cdots\ar@{-}[l] &\underset{n-2}\bullet\ar@{-}[l]\\
                  &\underset{1}\bullet\ar@{-}[ur] & & & & &\underset{n-1}\bullet \ar@{-}[ul]\\}
\end{displaymath}
with Frobenius fixes the nodes $0,1$ and switches the nodes $n-1, n$.

\end{itemize}

\subsection{Bruhat-Tits stratification of ADLV} Now we assume that $K$ is a maximal proper subset of $\tilde{\mathbb{S}}$ such that $\sigma(K)=K$. Consider the following set
$$\mathcal{J}=\{\Sigma\subset\tilde{\mathbb{S}}; \emptyset\neq \Sigma\text{ is }  \tau\sigma\text{-stable}\text{ and }d(v)=d(v^{\prime})\text{ for every } v,v^{\prime}\in \Sigma\}.$$ where $d(v)$ is the distance between $v$ and the unique vertex not in $K$.
In fact every $w\in \mathrm{EO}^{K}_{\sigma,\mathrm{cox}}(\mu)$ corresponds to a $\Sigma\in \mathcal{J}$ and we write $w$ as $w_{\Sigma}$.  If $(G,\mu, K)$ is of Coxeter type, for any $w_{\Sigma}\in\mathrm{EO}^{K}_{\sigma,\mathrm{cox}}(\mu)$, \begin{equation}\label{EO-DL}X_{K,w_{\Sigma}}(b)=\bigcup_{i\in J_{b}/J_{b}\cap \breve{K}_{\tilde{\mathbb{S}}-\Sigma}}i. X(w_{\Sigma}).\end{equation} Here $\breve{K}_{\tilde{\mathbb{S}}-\Sigma}$ is the parahoric subgroup associated to the set $\tilde{\mathbb{S}}-\Sigma$ and $X(w_{\Sigma})$ is a classical Deligne-Lusztig variety defined by 
\begin{equation}\label{DL-sigma}
X(w_{\Sigma})=\{g\in \breve{K}_{\mathrm{supp}_{\sigma}(w_{\Sigma})}/\breve{I}; g^{-1}\tau\sigma(g)\in \breve{I}w_{\Sigma}\breve{I}\}
\end{equation} 
which is a Deligne-Lusztig variety attached to the maximal reductive quotient $G_{w}$ of the special fiber of $\breve{K}_{\tilde{\mathbb{S}}-\Sigma}$. Combine \eqref{ADLV-EO} and \eqref{EO-DL} we arrive at the following \emph{Bruhat-Tits stratification} of $X(\mu, b)_{K}$:
\begin{equation}\label{EO-dec}
X(\mu, b)_{K}=\bigcup_{J_{b}/J_{b}\cap \ker(\kappa_{G})}\bigcup_{w_{\Sigma}\in  \mathrm{EO}^{K}_{\sigma,\mathrm{cox}}} \mathcal{X}^{\circ}_{\Sigma}
\end{equation}
where 
\begin{equation}\label{MSigma}
\mathcal{X}^{\circ}_{\Sigma}=\bigcup_{i\in J_{b}\cap\ker(\kappa_{G})/J_{b}\cap \breve{K}_{\tilde{\mathbb{S}}-\Sigma}}i. X(w_{\Sigma}).
\end{equation}
Here the index set is related to the Bruhat-Tits building of $J_{b}$ in the following way. The group  $J_{b}\cap\ker(\kappa_{G})$ acts on the set of faces of type $\Sigma$ transitively and  $J_{b}\cap \breve{K}_{\tilde{\mathbb{S}}-\Sigma}$ is precisely the stabilizer of the face of type $\Sigma$ in the base alcove.

\subsection{Computations in the orthogonal case}Now we let $G=\SO(V)$ as in the previous section.

\subsubsection{Odd orthogonal case} In this case, we have
\begin{center}
\begin{tabular}{lllll}
$\Sigma$                                     & \{0, 1\}                    & \{2\}                     & \{$i$+1\}                              \\
$w_{\Sigma}$                               & $\tau $     & $s_{0}\tau$ & $s_{0}s_{2}\cdots s_{i}\tau$ &         \\
$\tilde{\mathbb{S}}-\Sigma$             & $\{2,\cdots, m\}$                 & $\{0,1\}\cup \{3,\cdots, m\}$                      &$ \{0,1,\cdots, i\}\cup\{i+2,\cdots, m\}                           $\\
$\mathrm{supp}_{\sigma}(w_{\Sigma})$ & $\emptyset $& \{0,1\}                       & $\{0,1,2\cdots i\}   $                      .                            
\end{tabular}
\end{center}
Here $i$ lies in the range $[2, m-1]$.

\subsubsection{Even orthogonal case with $\det= (-1)^{m}$} In this case, we have
\begin{center}
\begin{tabular}{lllll}
$\Sigma$                                     & \{0, 1\}                    & \{2\}                     & \{$i$+1\}                              \\
$w_{\Sigma}$                               & $\tau $     & $s_{0}\tau$ & $s_{0}s_{2}\cdots s_{i}\tau$ &         \\
$\tilde{\mathbb{S}}-\Sigma$             & $\{2,\cdots, m\}$                 & $\{0,1\}\cup \{3,\cdots, m\}$                      &$ \{0,1,\cdots, i\}\cup\{i+2,\cdots, m\}                           $\\
$\mathrm{supp}_{\sigma}(w_{\Sigma})$ & $\emptyset $& \{0,1\}                       & $\{0,1,2\cdots i\}   $                      .                            
\end{tabular}
\end{center}

Here $i$ lies in the range $[2, m-2]$.

\subsubsection{Even orthogonal case with $\det\neq (-1)^{m}$} In this case, we have
\begin{center}
\begin{tabular}{ c c c c c}
$\Sigma$                                     & \{0, 1\}                    & \{2\}                     & \{$i$+1\}               \\
$w_{\Sigma}$                               & $\tau $     & $s_{0}\tau$ & $s_{0}s_{2}\cdots s_{i}\tau$  \\
$\tilde{\mathbb{S}}-\Sigma$             & $\{2,\cdots, m\}$                 & $\{0,1\}\cup \{3,\cdots, m\}$                      &$ \{0,1,\cdots, i\}\cup\{i+2,\cdots, m\}$   \\
$\mathrm{supp}_{\sigma}(w_{\Sigma})$ & $\emptyset $& \{0,1\}                       & $\{0,1,2\cdots i\}   $            
\end{tabular}
\end{center}

\begin{center}
\begin{tabular}{lllll}
$\Sigma$                                     & \{$m-1$\}                    & \{$m$\}                                    \\
$w_{\Sigma}$                               & $s_{0}s_{2}\cdots s_{m-2}s_{m}\tau $     & $s_{0}s_{2}\cdots s_{m-2}s_{m-1}\tau$          \\
$\tilde{\mathbb{S}}-\Sigma$             & $\{0,\cdots, m-2\}\cup\{m\}$                 & $\{0,\cdots, m\}$                \\
$\mathrm{supp}_{\sigma}(w_{\Sigma})$ & $\{0,\cdots, m-2\}\cup\{m\}$                     & $\{0,\cdots, m-1\}$              .                            
\end{tabular}
\end{center}

Here $i$ lies in the range $[2, m-2]$.

Each label $\Sigma$ for the EO-strata $\mathcal{X}^{\circ}_{\Sigma}$ determines a type for the vertex lattices. We write $t_{\Sigma}$ the type determined by $\Sigma$. This relation is given by the following formula
\begin{equation}
t_{\Sigma}=2(l(w_{\Sigma})+1). 
\end{equation} 

\begin{lemma}\label{RZ-ADL}
There is a bijection between $\breve{\mathcal{M}}(\FF)$ and the set $X(\mu, b)_{K}$.
\end{lemma}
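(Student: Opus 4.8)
The plan is to identify both sides with the same moduli-theoretic data, using the dictionary between $p$-divisible groups with crystalline tensors and the group-theoretic description of points on affine Deligne--Lusztig varieties. The comparison is essentially the statement that the Howard--Pappas Rapoport--Zink space $\breve{\calM}^{\Diamond}$ (or rather its quotient $\breve{\calM}=\breve{\calM}^{\Diamond}/p^{\ZZ}$) has, on $\FF$-points, a description as a union of affine Deligne--Lusztig varieties at hyperspecial level. Concretely, for the Hodge type datum $(G^{\Diamond},\mu^{\Diamond},b^{\Diamond})$ with $G^{\Diamond}(\ZZ_{p})$ a hyperspecial maximal compact, the general theory (e.g. the discussion in \cite{HP17}, or the abelian-type analogue in \cite{Shen-gen}) gives a canonical bijection
$$\breve{\calM}^{\Diamond}(\FF)\;\cong\; X(\mu^{\Diamond},b^{\Diamond})_{K^{\Diamond}}=\{g\in G^{\Diamond}(K_{0})/G^{\Diamond}(W_{0}):\ g^{-1}b^{\Diamond}\sigma(g)\in \Adm(\mu^{\Diamond})\text{-double cosets}\},$$
where $K^{\Diamond}$ is the hyperspecial vertex. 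First I would recall this identification and check that it is $J_{b^{\Diamond}}(\QQ_{p})$-equivariant, then pass to the quotient by $p^{\ZZ}$ on the left and note that this matches the passage from $G^{\Diamond}$ to $G=\SO(V)$ under the short exact sequence $1\to\GG_m\to G^{\Diamond}\to G\to 1$ together with Lemma~\ref{J-diamond}. The point is that $\breve{\calM}=\breve{\calM}^{\Diamond}/p^{\ZZ}$ on the $G^{\Diamond}$-side corresponds to $X(\mu,b)_{K}$ on the $G=\SO(V)$ side, because quotienting the $\GSpin$ affine Deligne--Lusztig set by the central $p^{\ZZ}$ exactly realizes the transition to $\SO$.

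Next I would make the comparison explicit at the level of lattices. The theorem before Proposition~\ref{special-vertex} identifies $\breve{\calM}(\FF)$ with the set of special lattices $L\subset V_{K_0}$, i.e. self-dual $W_0$-lattices with $\dim_{\FF}(L+\Phi_*L)/L=1$. On the other side, a point of $X(\mu,b)_{K}$ is a class $gK\in G(K_0)/G(W_0)$ with $g^{-1}b\sigma(g)$ lying in an admissible double coset; since $G(K_0)/G(W_0)$ is the set of self-dual lattices in $V_{K_0}$ (transitive action with hyperspecial stabilizer), and the admissibility/minuscule condition translates precisely into the relative position condition $p L^{\vee} \subset \Phi_*(L)+L \subset L$ with the single-step jump encoded by $\mu$, the two lattice descriptions coincide. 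So the core of the argument is: (i) $G(K_0)/G(W_0)\leftrightarrow\{$self-dual lattices$\}$; (ii) translate the defining inequality of $X(\mu,b)_K$ (that $\inv(gK,b\sigma(g)K)\in\Adm(\mu)$) into the lattice condition defining special lattices; (iii) check the $J_b(\QQ_p)=\SO(V^{\Phi}_{K_0})$-actions agree. Steps (i) and (iii) are routine; step (ii) requires knowing that $\Adm(\mu^{\Diamond})$ for this minuscule $\mu^{\Diamond}$ of $\GSpin$, restricted to $\breve{K}\backslash G/\breve{K}$, is exactly the set of double cosets whose lattice-theoretic meaning is ``$L$ and $\Phi_*L$ are in relative position $\leq\mu$'', which is the $\dim_{\FF}(L+\Phi_*L)/L\le 1$ condition (equality on the basic/non-trivial locus).

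I expect the main obstacle to be bookkeeping the normalization issues: covariant versus contravariant Dieudonné theory (the excerpt flags this in Lemma~\ref{X_{0}}), the sign/duality conventions relating $L^{\sharp}_y=\Phi_*(L_y)$ to the relative-position map $\inv$, and especially the passage $\breve{\calM}=\breve{\calM}^{\Diamond}/p^{\ZZ}$ versus $G^{\Diamond}\to G=\SO(V)$ — one must verify that quotienting by $p^{\ZZ}$ on $\GSpin$-lattices is compatible with the isomorphism \eqref{spin-to-or} of Shen and does not introduce a discrepancy with $X_*(T)_\Gamma$ for $\SO$ (the component group $\Omega$). Concretely, the subtlety is that $\breve{\calM}^{\Diamond}$ has several connected components indexed by $\kappa_{G^{\Diamond}}$ and after quotienting one must match this with the components of $X(\mu,b)_K$ indexed by $J_b/J_b\cap\ker(\kappa_G)$ as in \eqref{EO-dec}; I would handle this by comparing spinor norms with the appropriate reflection in $\SO(V^{\Phi}_{K_0})$, exactly as in the proof of Theorem~\ref{thmZ} where an element with $\ord_p(\eta(\tilde g))=1$ was used. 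Once these normalizations are pinned down, the bijection is the tautological identification of the two incarnations of the same set of lattices, and one records that it is compatible with the Bruhat--Tits stratifications on both sides (Definition~\ref{ssp-locus} and the vertex lattices, versus \eqref{EO-dec} and $\mathcal{X}^{\circ}_\Sigma$) via the formula $t_\Sigma=2(l(w_\Sigma)+1)$.
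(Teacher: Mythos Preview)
Your proposal is correct in spirit and ultimately rests on the same input as the paper: the identification of $\breve{\calM}^{\Diamond}(\FF)$ with the affine Deligne--Lusztig set for the $\GSpin$ datum established in \cite{HP17}, followed by the descent to $\SO$ via $\breve{\calM}=\breve{\calM}^{\Diamond}/p^{\ZZ}$. The paper's own proof, however, does not carry out any of the lattice bookkeeping, component matching, or normalization checks you outline; it simply invokes \cite[Theorem~2.4.10]{HP17} and \cite[Proposition~2.4.3]{HP17} as black boxes. So while your route is not wrong, it is substantially more elaborate than what is needed here: the lemma is recorded as a direct citation, and the detailed translation between special lattices and the admissible-set condition (your steps (i)--(iii)), the $\kappa_{G}$-component analysis, and the spinor-norm compatibility are all subsumed in those references rather than reproved. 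What your unpacking buys is an explanation of \emph{why} the citation works and how the $\GSpin\to\SO$ passage interacts with $\Omega$ and $p^{\ZZ}$; what the paper's approach buys is brevity, since those points are already handled in the cited source.
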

\begin{proof}
This follows from \cite[Theorem 2.4.10]{HP17} and \cite[Proposition 2.4.3]{HP17}.
\end{proof}

Under the bijection in Lemma \ref{RZ-ADL}, the EO-stratum  $\mathcal{X}^{\circ}_{\Sigma}$ correspond to the union of Bruhat-Tits strata of a fixed type:
$$\mathcal{X}^{\circ}_{\Sigma}=\bigcup_{\Lambda, t_{\Lambda}= t_{\Sigma}} \BT_{\Lambda}.$$
We define the superspecial locus of $X(\mu, b)_{K}$ to be the \emph{minimal EO stratum} that is the stratum whose corresponding element $w_{\Sigma}$ is of minimal length. In all the cases computed above the minimal EO stratum corresponds to $\Sigma=\{0,1\}$ and 
$$\mathcal{X}^{\circ}_{\{0,1\}}=\bigcup_{\Lambda, t_{\Lambda}= 2} \BT_{\Lambda}$$
is exactly the union of type $2$ vertex strata which is $\breve{\mathcal{M}}_{\ssp}(\FF)$. Then we can prove Theorem \ref{thmZ} can be proved using \eqref{MSigma}
$$\mathcal{X}^{\circ}_{\{0,1\}}=\bigcup_{i\in J_{b}/J_{b}\cap \breve{K}_{\tilde{\mathbb{S}}-\{0,1\}}}i. X(w_{\{0,1\}}).$$
 Indeed since $\mathrm{supp}_{\sigma}(w_{\{0,1\}})=\emptyset$, one deduces that $X(w_{\{0,1\}})$ is a single point from \eqref{DL-sigma} and therefore $\mathcal{X}^{\circ}_{\{0,1\}}=\breve{\mathcal{M}}_{\ssp}(\FF)$ is given by the homogeneous space $ J_{b}/J_{b}\cap \breve{K}_{\tilde{\mathbb{S}}-\{0,1\}}$ which is just $Z$ as in Theorem \ref{thmZ}.

We finish this section by remarking that one can extend the above computation to any Coxeter type ADLV and obtain a uniformization of the minimal Bruhat-Tits stratum by a homogeneous space like $Z$. Indeed, one can go through all the cases in \cite[6.3]{GH15} and check that all the minimal Bruhat-Tits strata are of dimension $0$ and are unique. 

\section{Liftings the superspecial locus}
In this section we describe the lifts of the superspecial points in the integral points of the Rapoport-Zink space. For this we need to recall the global construction of the Rapoport-Zink space in \cite{HP17}. In this section we abuse some notations. We will  denote by $(V, Q)$ a quadratic space $\ZZ_{(p)}$ of signature $(n-2, 2)$ with $n\geq 3$. Let $G^{\Diamond}$ be the group $\Gspin(V)$ over $\ZZ_{(p)}$ and $G$ be the group $\SO(V)$ over $\ZZ_{(p)}$.  Let $K^{\Diamond}\subset G^{\Diamond}(\mathbb{A}_{f})$ be an open compact subset such that $K^{\Diamond}=K^{\Diamond}_{p}K^{\Diamond p}$ with $K^{\Diamond}_{p}=G^{\Diamond}(\ZZ_{p})$ and $K^{\Diamond p}$ sufficiently small. Similarly $K=K^{p}K_{p}$ with $K_{p}=G(\ZZ_{p})$ and $K^{p}$ sufficiently small. 
\subsection{Integral model of $\Gspin$-type and orthogonal type Shimura varieties} We have the integral canonical model $\mathscr{S}^{\Diamond}_{K^{\Diamond}}$ over $W_{0}$ for the Shimura variety for the spin similitude group $G^{\Diamond}$ and the integral canonical model $\mathscr{S}_{K}$ over $W_{0}$ for Shimura variety for the special orthogonal group $G$ constructed by Madapusi Pera \cite{MP16}. There is a closed immersion  $\mathscr{S}^{\Diamond}_{K^{\Diamond}}\hookrightarrow \mathscr{A}_{g}$ where $\mathscr{A}_{g}$ is a suitable Siegel Shimura variety coming from the Hodge embedding. Let $(A^{\KS}, \lambda^{\KS})$ be the \emph{Kuga-Satake} abelian scheme over $\mathscr{S}^{\Diamond}_{K^{\Diamond}}$ which is the pullback of the universal abelian scheme over $\mathscr{A}_{g}$ with its polarization. Consider the special fiber $\mathscr{S}^{\Diamond}_{K^{\Diamond}, \FF}$ of $\mathscr{S}^{\Diamond}_{K^{\Diamond}}$ at $p$ equipped with $(A^{\KS}_{\FF}, \lambda^{\KS}_{\FF})$. Let $\mathbf{H}_{\cris}$ be the first crystalline homology $\mathrm{H}^{\cris}_{1}(A^{\KS}_{\FF})$ of $A^{\KS}_{\FF}$ over $\mathscr{S}^{\Diamond}_{K^{\Diamond}, \FF}$. This is a crystal over $(\mathscr{S}^{\Diamond}_{K^{\Diamond},\FF}/W_{0})_{cris}$, the big crystalline site of $\mathscr{S}^{\Diamond}_{K^{\Diamond}, \FF}$ over $W_{0}$.   We introduce the following notation:
$$\mathbf{H}^{(r,s)}_{\cris}= \mathbf{H}_{\cris}^{\otimes r}\otimes \mathbf{H}_{\cris}^{* \otimes s} $$
where $\mathbf{H}^{*}_{\cris}$ is the $W_{0}$ dual of $\mathbf{H}_{\cris}$. Let $y\in \mathscr{S}^{\Diamond}_{K^{\Diamond},\FF}(\FF)$ be a point, then we write $\mathbf{H}_{\cris, y}$ the specialization of $\mathbf{H}_{\cris}$ to $y$. For each $y$, one can construct a quadratic space 
\begin{equation}\label{special-p}
\mathbf{V}_{\cris,y}\subset \mathbf{H}^{(1,1)}_{\cris,y}
\end{equation}
over $W_{0}$ which is isomorphic to $V_{W_{0}}$ by \cite[Proposition 4.7]{MP16}. Similarly, for each prime $l\neq p$, we denote by $\mathbf{H}_{l}$ be the relative $l$-adic Tate module of $A^{\KS}_{\FF}$ and denote by $\mathbf{H}_{l,y}$ its specialization to $y$. One can again construct a quadratic space 
\begin{equation}\label{special-l}
\mathbf{V}_{l, y}\subset\mathbf{H}^{(1,1)}_{l, y} 
\end{equation}
as in \cite[3.12]{MP16}. We will need the following characterization of $\mathbf{V}_{\cris, y}$. 

\begin{proposition}\label{Vcris}
Set $\mathbf{H}_{\dR, y}=\mathbf{H}_{\cris, y}\otimes \FF$ and $\mathbf{V}_{\dR, y}=\mathbf{V}_{\cris, y}\otimes \FF$. Then there is a canonical isotropic line ${F}^{1}\mathbf{V}_{\dR, y}\subset \mathbf{V}_{\dR, y}$ such that the Hodge filtration $\mathrm{Fil}^{1}\mathbf{H}_{\dR, y}$ is cut out by ${F}^{1}\mathbf{V}_{\dR, y}$. That is ${F}^{1}\mathbf{H}_{\dR, y}=\Ker({F}^{1}\mathbf{V}_{\dR, y})$. 
\end{proposition}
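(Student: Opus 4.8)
The plan is to reduce the statement to a pointwise computation inside the Clifford algebra. Fix $y\in\mathscr{S}^{\Diamond}_{K^{\Diamond},\FF}(\FF)$. First I would recall the structural facts about the crystalline realisation at $y$ coming from the construction of the canonical integral model: $\mathbf{H}_{\cris,y}$ is the covariant \Dieu{} module of the Kuga-Satake $p$-divisible group $X_{y}$; it carries the Tate tensors $(t_{\alpha,y})$; the quadratic space $\mathbf{V}_{\cris,y}\subset\mathbf{H}^{(1,1)}_{\cris,y}=\End(\mathbf{H}_{\cris,y})$ cut out by these tensors is isomorphic to $V_{W_{0}}$ by \cite[Proposition 4.7]{MP16}, and the inclusion $\mathbf{V}_{\cris,y}\hookrightarrow\End(\mathbf{H}_{\cris,y})$ realises $\mathbf{H}_{\cris,y}$ as a faithful module over the Clifford algebra $C(\mathbf{V}_{\cris,y})$, isomorphic to $C(L)_{W_{0}}$ with $V$ acting by left multiplication. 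Moreover, by the canonicity of $\mathscr{S}^{\Diamond}_{K^{\Diamond}}$ --- equivalently, the $P_{\mu}$-torsor condition appearing in the moduli description of $\breve{\calM}^{\Diamond}$ recalled in \S3.1 --- the scheme of isomorphisms $\mathbf{H}_{\dR,y}\xrightarrow{\sim}C(L)\otimes_{\ZZ_{p}}\FF$ respecting the tensors and carrying $\Fil^{1}\mathbf{H}_{\dR,y}$ to the filtration induced by the reduction $\bar\mu^{\Diamond}$ of $\mu^{\Diamond}$ is a torsor under $P_{\mu}\times_{W_{0}}\FF$; over the algebraically closed field $\FF$ it has a section $\phi$. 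After applying $\phi$, the triple $(\mathbf{H}_{\dR,y},\Fil^{1}\mathbf{H}_{\dR,y},\mathbf{V}_{\dR,y})$ becomes the standard data attached to the basis $x_{1},\dots,x_{n}$ of $L$ of \S3.1.

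Next I would produce the line. Give $\End(\mathbf{H}_{\dR,y})=\mathbf{H}^{(1,1)}_{\dR,y}$ the tensor product filtration induced by $\Fil^{\bullet}\mathbf{H}_{\dR,y}$ and set
$$F^{1}\mathbf{V}_{\dR,y}:=\mathbf{V}_{\dR,y}\cap\Fil^{1}\End(\mathbf{H}_{\dR,y}),$$
which is canonical, referring only to the inclusion \eqref{special-p} and the Hodge filtration. Any $x\in\Fil^{1}\End(\mathbf{H}_{\dR,y})$ satisfies $x(\mathbf{H}_{\dR,y})\subseteq\Fil^{1}\mathbf{H}_{\dR,y}$ and $x(\Fil^{1}\mathbf{H}_{\dR,y})=0$, hence $x\circ x=0$ as an endomorphism; since $\mathbf{H}_{\dR,y}$ is a faithful $C(\mathbf{V}_{\dR,y})$-module and $x\cdot x=Q(x)$ in $C(\mathbf{V}_{\dR,y})$, this forces $Q(x)=0$. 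Thus $F^{1}\mathbf{V}_{\dR,y}$ is totally isotropic; and it is exactly a line because, under $\phi$, it is the weight one eigenspace of $V_{\FF}$ for $\bar\mu^{\Diamond}$, which is one dimensional as $\mu^{\Diamond}$ is of $\GSpin$ type (it acts on $V$ with weights $-1,0,1$ of multiplicities $1,n-2,1$). Concretely this line is spanned by one of the two isotropic basis vectors $x_{1},x_{2}$.

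Finally I would match dimensions to conclude. For $x$ spanning $F^{1}\mathbf{V}_{\dR,y}$ the displayed properties above give $\Fil^{1}\mathbf{H}_{\dR,y}\subseteq\Ker(x\cdot)$, and it remains to see both sides have dimension $\tfrac12\dim_{\FF}\mathbf{H}_{\dR,y}$. For the left side this holds because the Kuga-Satake Hodge cocharacter $i\circ\mu^{\Diamond}$ is minuscule with its two weights occurring with equal multiplicity. For the right side: $x$ is a nonzero isotropic vector acting on the Clifford module $\mathbf{H}_{\dR,y}\cong C(L)_{\FF}$, and completing $x$ to a hyperbolic pair gives an orthogonal splitting $\mathbf{V}_{\dR,y}\cong\langle x,x'\rangle\perp W'$ together with an isomorphism of left modules $\mathbf{H}_{\dR,y}\cong M_{2}(\FF)\otimes_{\FF}C(W')_{\FF}$ under which $x$ acts by left multiplication by a rank one nilpotent of $M_{2}(\FF)$ tensored with the identity; such an operator has rank exactly $\tfrac12\dim_{\FF}\mathbf{H}_{\dR,y}$. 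Hence $\Fil^{1}\mathbf{H}_{\dR,y}=\Ker(x\cdot)=\Ker(F^{1}\mathbf{V}_{\dR,y})$. Equivalently, in the coordinates of \S3.1 one checks directly that $\Fil^{1}C(L)_{\FF}=x_{1}C(L)_{\FF}=\Ker(x_{1}\cdot)$ using the idempotents $x_{1}x_{2}$ and $x_{2}x_{1}$, and transports this through $\phi$.

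The only non-formal ingredient is the first step --- that the crystalline Tate tensors at $y$ exhibit $\mathbf{H}_{\cris,y}$ as a faithful $C(L)_{W_{0}}$-module with $\mathbf{V}_{\cris,y}\cong V_{W_{0}}$, and that the Hodge filtration at $y$ is induced by a cocharacter conjugate to $\mu^{\Diamond}$. Both are part of Madapusi Pera's construction of $\mathscr{S}^{\Diamond}_{K^{\Diamond}}$ and were recalled in \S3.1. The one point to watch is that $\phi$ can be taken to respect the Clifford action and both filtrations simultaneously, which is exactly what the $P_{\mu}$-torsor condition encodes; granting this, the rest is elementary linear algebra over $\FF$.
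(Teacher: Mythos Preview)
Your argument is correct. The paper, however, does not prove this proposition at all: its entire proof is the single sentence ``This is proved in \cite[Proposition 4.7]{MP16}.'' So there is no genuine comparison of approaches to make; rather, you have supplied the details that the paper outsources to Madapusi Pera. Your definition of $F^{1}\mathbf{V}_{\dR,y}$ as $\mathbf{V}_{\dR,y}\cap\Fil^{1}\End(\mathbf{H}_{\dR,y})$, the isotropy argument via $x^{2}=Q(x)$ in the Clifford algebra, the dimension count using the weight decomposition of $\mu^{\Diamond}$ on $V$, and the explicit verification $\Fil^{1}C(L)_{\FF}=x_{1}C(L)_{\FF}=\Ker(x_{1}\cdot)$ via the idempotents $x_{1}x_{2}$ and $x_{2}x_{1}$ are all sound and are indeed the content behind the citation. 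The one place to be careful is exactly the one you flag: that the $P_{\mu}$-torsor condition guarantees a trivialisation $\phi$ compatible simultaneously with the tensors and the filtration; once that is granted (and it is part of the cited construction), the rest is, as you say, linear algebra over $\FF$.
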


\begin{proof}
This is proved in \cite[Proposition 4.7]{MP16}.
\end{proof}
Since $\mathscr{S}^{\Diamond}_{K^{\Diamond},\FF}\rightarrow \mathscr{S}_{K,\FF}$ is an \'{e}tale covering, the crystals $\mathbf{V}_{\cris,x}$ and $\mathbf{H}_{\cris,x}$ constructed above naturally descends to crystals on  $(\mathscr{S}_{K,\FF}/W_{0})_{\cris}$. We use the same notation to denote them.

Let $\breve{\calM}_{\GSp(C)}$ be the Siegel Rapoport-Zink space and let $\Theta_{\GSp}:\breve{\calM}_{\GSp(C)}\rightarrow  \widehat{\mathscr{A}}_{g}$ be the Rapoport-Zink uniformization map \cite[Theorem 6.21]{RZ96}. 
By construction \cite[Definition 3.2.6]{HP17} the Rapoport-Zink space $\breve{\calM^{\Diamond}}$ is realized as an open and closed formal subscheme of $$\breve{\calM}_{\GSp(C)}\times_{\widehat{\mathscr{A}}_{g}} \widehat{\mathscr{S}}^{\Diamond}_{K^{\Diamond}, W_{0}}$$ and therefore naturally comes equipped with a map $$\Theta: \breve{\calM^{\Diamond}} \rightarrow \widehat{\mathscr{S}}^{\Diamond}_{K^{\Diamond}, W_{0}}$$
where $\widehat{\mathscr{S}}^{\Diamond}_{K^{\Diamond}, W_{0}}$ is the completion of $\mathscr{S}^{\Diamond}_{K^{\Diamond}, W_{0}}$ along its special fiber at $p$. Let $x\in \breve{\calM^{\Diamond}}(\FF)$ and $y=\Theta(x)\in  \mathscr{S}^{\Diamond}_{K^{\Diamond},\FF}(\FF)$. Then the construction of $\Theta$ in fact furnishes an isomorphism $$ \breve{\calM}^{\Diamond}_{x}\xrightarrow{\cong}  \widehat{\mathscr{S}}^{\Diamond}_{K^{\Diamond}, y}$$ where  $\breve{\calM}^{\Diamond}_{x}$ is the completed local ring of $\breve{\calM}^{\Diamond}$ at $x$ and $\widehat{\mathscr{S}}^{\Diamond}_{K^{\Diamond}, y}$ is the completed local ring of $\widehat{\mathscr{S}}^{\Diamond}_{K^{\Diamond}}$ at $y$. All the above constructions carry over to the setting of orthogonal type Shimura varieties and Rapoport-Zink spaces. In particular, we have a map
$$\Theta: \breve{\calM} \rightarrow \widehat{\mathscr{S}}_{K, W_{0}}$$ and an identification $\breve{\calM}_{x}\xrightarrow{\cong}  \widehat{\mathscr{S}}_{K, y}$ for  $x\in \breve{\calM}(\FF)$ and $y=\Theta(x)\in  \mathscr{S}_{K,\FF}(\FF)$. 

In particular, the above identification allows us to transfer the results in Proposition \ref{Vcris} to the setting of $p$-divisible groups. 
\begin{lemma}
There is an identification $\mathbf{H}_{\cris,y}=\mathbb{D}(X_{x})(W_{0})$ with $y=\Theta(x)$ and in this case $\mathbf{V}_{cris, y}$ is precisely the special lattice $\Phi_{*}(L_{x})$ associated to $x\in \breve{\mathcal{M}}(\FF)$. 
\end{lemma}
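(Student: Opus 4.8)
The plan is to read off both statements from the compatibility of the Rapoport--Zink uniformization $\Theta$ with the crystalline realization of the Kuga--Satake abelian scheme, together with the description of the special lattices $L_{x},L^{\sharp}_{x}$ already recorded. For the first identification, recall that $\breve{\calM}^{\Diamond}$ is by construction an open and closed formal subscheme of $\breve{\calM}_{\GSp(C)}\times_{\widehat{\mathscr{A}}_{g}}\widehat{\mathscr{S}}^{\Diamond}_{K^{\Diamond}, W_{0}}$, so a point $x=(X,\rho,(t_{\alpha}))\in\breve{\calM}^{\Diamond}(\FF)$ is carried by $\Theta_{\GSp}$ to a point of $\widehat{\mathscr{A}}_{g}(\FF)$ whose underlying abelian variety is $A^{\KS}_{y}$ with $y=\Theta(x)$ and whose $p$-divisible group is canonically $X=X_{x}$; this is exactly what the factorization of $\Theta$ through $\widehat{\mathscr{A}}_{g}$ encodes, and it refines the identification $\breve{\calM}^{\Diamond}_{x}\cong\widehat{\mathscr{S}}^{\Diamond}_{K^{\Diamond}, y}$ of completed local rings stated above. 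Since the first crystalline homology of an abelian variety over $\FF$, evaluated at $W_{0}$, is the covariant Dieudonné module of its $p$-divisible group, one gets $\mathbf{H}_{\cris, y}=\mathrm{H}^{\cris}_{1}(A^{\KS}_{y})(W_{0})=\mathbb{D}(X_{x})(W_{0})=M_{x}$, compatibly with the Frobenius structures; this descends along the étale cover $\mathscr{S}^{\Diamond}_{K^{\Diamond}}\to\mathscr{S}_{K}$ and matches $\Theta:\breve{\calM}\to\widehat{\mathscr{S}}_{K,W_{0}}$.

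For the second assertion, note that by Madapusi Pera's construction $\mathbf{V}_{\cris, y}$ is a $W_{0}$-submodule of $\mathbf{H}^{(1,1)}_{\cris, y}=\mathbf{H}_{\cris, y}\otimes\mathbf{H}^{*}_{\cris, y}\cong\End_{W_{0}}(M_{x})$ which is isometric to the self-dual quadratic space $V_{W_{0}}$. I would then check that under the identification of the first paragraph $\mathbf{V}_{\cris, y}$ is carried to the canonical $V$-subspace, so that after inverting $p$ it becomes the image of $V_{K_{0}}$ under the embedding \eqref{V-action} inside $\End(D(X_{x}))$. Granting this, every element of $\mathbf{V}_{\cris, y}$ is a $W_{0}$-endomorphism of $M_{x}$, so $\mathbf{V}_{\cris, y}\subseteq L^{\sharp}_{x}=\{z\in V_{K_{0}} : zM_{x}\subseteq M_{x}\}$. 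Both $\mathbf{V}_{\cris, y}$ and $L^{\sharp}_{x}$ are self-dual $W_{0}$-lattices of full rank in the nondegenerate quadratic space $V_{K_{0}}$ --- the former by construction, the latter because $L^{\sharp}_{x}=\Phi_{*}(L_{x})$ is the image under the isometry $\Phi$ of the self-dual special lattice $L_{x}$ --- and a self-dual sublattice of a self-dual lattice of the same rank must coincide with it, since dualizing reverses the inclusion. Hence $\mathbf{V}_{\cris, y}=L^{\sharp}_{x}=\Phi_{*}(L_{x})$.

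The main obstacle is the compatibility claim in the middle of the second paragraph: one must match Madapusi Pera's tensor-theoretic definition of $\mathbf{V}_{\cris}$ with the explicit Clifford-algebra embedding $V_{K_{0}}\hookrightarrow\End(D(X_{x}))$ used by Howard--Pappas. I expect this to follow by observing that $\mathbf{V}_{\cris}$ is cut out inside the tensor algebra of $\mathbf{H}_{\cris}$ by (part of) the crystalline tensors $(t_{\alpha})$, which are by definition the Frobenius-invariant parallel transports of the fixed tensors $(s_{\alpha})\subset C(L)^{\otimes}$ that cut out $G^{\Diamond}$, hence the standard representation; the Hodge-filtration normalization of Proposition \ref{Vcris} then guarantees that it is precisely the lattice preserving $M_{x}$ and that no Frobenius twist intervenes. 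Alternatively, this coincidence is essentially contained in the treatment of special endomorphisms in \cite[\S 4]{HP17} and may simply be quoted.
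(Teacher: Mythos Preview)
Your proposal is correct and follows essentially the same approach as the paper: the paper's proof simply asserts that $\mathbf{V}_{\cris,y}\subset L^{\sharp}_{x}$ ``by construction'' and then concludes by the self-dual-sublattice argument you give. Your write-up is more explicit about why the first identification holds and, commendably, flags the compatibility between Madapusi Pera's $\mathbf{V}_{\cris}$ and the Howard--Pappas embedding $V_{K_{0}}\hookrightarrow\End(D(X_{x}))$ as the point that actually needs checking---the paper absorbs this into the phrase ``by construction''.
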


\begin{proof}
The identification $\mathbf{H}_{\cris, y}=\mathbb{D}(X_{x})(W_{0})$ is clear. To prove the second assertion, notice that by construction $\mathbf{V}_{\cris, y}$ is contained in $L^{\sharp}_{x}  =\{z\in V_{K_{0}}: zM_{y}\subset M_{y}\}$. Then the claim in the lemma follows from the fact that both $\mathbf{V}_{\cris, y}$ and $L^{\sharp}_{x}$ are self-dual lattices in $V_{K_{0}}$.
\end{proof}

\begin{lemma}\label{F1L}
There is a canonical isotropic line ${F}^{1}\Phi_{*}(L_{x,\FF})$ in $\Phi_{*}(L_{x,\FF})$ which is characterized as the orthogonal complement of $\Phi_{*}(L_{x, \FF})\cap L_{x,\FF}$ in $\Phi_{*}(L_{x, \FF})$.
\end{lemma}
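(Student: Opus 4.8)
The plan is to transport the canonical isotropic line $F^{1}\mathbf{V}_{\dR,y}\subset\mathbf{V}_{\dR,y}$ of Proposition \ref{Vcris} through the two identifications just established, and then to give its intrinsic description in terms of the special lattices $L_{x}$ and $\Phi_{*}(L_{x})$. By the preceding lemma we have $\mathbf{H}_{\cris,y}=\mathbb{D}(X_{x})(W_{0})=M_{x}$ and $\mathbf{V}_{\cris,y}=\Phi_{*}(L_{x})$ as self-dual lattices in $V_{K_{0}}$; reducing mod $p$ gives $\mathbf{V}_{\dR,y}=\Phi_{*}(L_{x,\FF})$ inside $\mathbf{H}_{\dR,y}=M_{x}/pM_{x}$. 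So Proposition \ref{Vcris} already supplies a canonical isotropic line $F^{1}\Phi_{*}(L_{x,\FF})\subset\Phi_{*}(L_{x,\FF})$ with the property that $\mathrm{Fil}^{1}\mathbf{H}_{\dR,y}=M_{1,x}/pM_{x}$ equals $\Ker(F^{1}\Phi_{*}(L_{x,\FF}))$, i.e. the annihilator under the action $V_{K_{0}}\to\End(D(X_{x}))$. It remains to identify this line with the orthogonal complement of $\Phi_{*}(L_{x,\FF})\cap L_{x,\FF}$.

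The key step is to compute $\Phi_{*}(L_{x})\cap L_{x}$ in terms of the $M$-lattices. Recall from the theorem of Howard--Pappas quoted in the excerpt that $L^{\sharp}_{x}=\Phi_{*}(L_{x})$, that $L_{x}=\{z: zM_{1,x}\subset M_{1,x}\}$, and that $L^{\sharp}_{x}+L_{x}=L^{\sharp\sharp}_{x}=\{z: zM_{1,x}\subset M_{x}\}$. Since $L_{x}$ is special, $L_{x}+\Phi_{*}(L_{x})/L_{x}\cong W_{0}/pW_{0}$, and by self-duality of both lattices this forces $\Phi_{*}(L_{x})\cap L_{x}$ to have colength $1$ in each of $L_{x}$ and $\Phi_{*}(L_{x})$; hence modulo $p$ the image $(\Phi_{*}(L_{x,\FF})\cap L_{x,\FF})$ is a hyperplane in $\Phi_{*}(L_{x,\FF})$, and its orthogonal complement is a line. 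I would then check that this line coincides with $F^{1}\Phi_{*}(L_{x,\FF})$ by verifying the defining annihilation property: an element $z\in\Phi_{*}(L_{x})$ lies in $L_{x}$ precisely when $zM_{1,x}\subset M_{1,x}$, and one relates $M_{1,x}$ to the Hodge filtration $F^{1}\mathbf{H}_{\dR,y}$ cut out (via Proposition \ref{Vcris}) by $F^{1}\Phi_{*}(L_{x,\FF})$; a short linear-algebra argument with the quadratic form then matches the hyperplane $\Phi_{*}(L_{x,\FF})\cap L_{x,\FF}$ with the orthogonal complement of the isotropic line.

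The main obstacle is bookkeeping with the three lattices $M_{x}\supset M_{1,x}$, their images mod $p$, and the action map $V_{K_{0}}\to\End(D(X_{x}))$ together with the relation $\Phi(z)=F\circ z\circ F^{-1}$: one must be careful about covariant versus contravariant conventions and about which of $L_{x}$, $\Phi_{*}(L_{x})$, $L^{\sharp\sharp}_{x}$ corresponds to stabilizing $M_{x}$ versus $M_{1,x}$. Once the dictionary is pinned down, the statement is a one-line consequence of Proposition \ref{Vcris} plus the colength-$1$ observation, so I expect the proof itself to be short; the work is entirely in setting up the identifications correctly and confirming that the isotropic line of loc. cit. is the orthogonal complement rather than, say, the radical or the image of $M_{1,x}$ itself.
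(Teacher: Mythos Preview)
Your proposal is correct and follows essentially the same route as the paper: invoke Proposition \ref{Vcris} to produce the canonical isotropic line as the annihilator $\{z\in\Phi_{*}(L_{x,\FF}):zM_{1,x,\FF}=0\}$, observe that $L_{x,\FF}\cap\Phi_{*}(L_{x,\FF})$ is the codimension-one subspace of elements preserving $M_{1,x,\FF}$, and then use the Clifford relation $zw+wz=[z,w]$ to see that the annihilator is orthogonal to this hyperplane, whence equality by a dimension count. The paper's proof is precisely this, stated more tersely; your caveats about bookkeeping are well placed but no additional idea is needed.
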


\begin{proof}
The existence of $F^{1}\Phi_{*}(L_{x, \FF})$ follows from Proposition \ref{Vcris}. Recall that by \eqref{special-lattice}, $L_{x, \FF} =\{z\in V_{\FF}: zM_{1, y, \FF}\subset M_{1,y,\FF}\}$ and $\Phi_{*}(L_{x, \FF}) =\{z\in V_{\FF}: zM_{y, \FF}\subset M_{y, \FF}\}.$ By the definition of special lattice, we have $L_{x,\FF}\cap \Phi_{*}(L_{x, \FF})\subset^{1} \Phi_{*}(L_{x, \FF})$. Since $F^{1}\Phi_{*}(L_{x, \FF})=\{z\in \Phi_{*}(L_{x,\FF}): zM_{1,x, \FF}=0\}$, $F^{1}\Phi_{*}(L_{x, \FF})\subset (L_{x,\FF}\cap \Phi_{*}(L_{x, \FF}))^{\perp}$. The inclusion is an equality since both sides of the inclusion is of dimension $1$. 
\end{proof}

\subsection{Deformation theory}
 
Let $x\in \breve{\calM}(\FF)$ be a closed point. Then the completed local ring  $\breve{\calM}_{x}$ represents the set valued functor that classifies the lifts of $x$.  In particular we have the following identification  
$$\breve{\mathcal{M}}_{x}(W_{0})=\{\text{lifts of }x \text{ in }\breve{\calM}(W_{0})\}.$$

Combing the bijection $\breve{\calM}_{x}\xrightarrow{\cong}  \widehat{\mathscr{S}}_{K, y}$ and Proposition \ref{Vcris}, we obtain the following result.

\begin{theorem}\label{lift}
Let $x\in \breve{\mathcal{M}}(\FF)$. There is an identification 
$$\breve{\mathcal{M}}_{x}(W_{0})=\{\text{isotropic line } F^{1}\Phi_{*}(L_{x})\subset \Phi_{*}(L_{x})\text{ lifting } F^{1}\Phi_{*}(L_{ x, \FF}) \subset \Phi_{*}(L_{x, \FF})\}.$$
\end{theorem}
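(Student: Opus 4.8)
The strategy is to combine two inputs that are already assembled: the isomorphism $\breve{\calM}_{x} \xrightarrow{\cong} \widehat{\mathscr{S}}_{K,y}$ of completed local rings (valid for $y = \Theta(x)$), and the characterization of the Hodge filtration on the Kuga--Satake abelian scheme in terms of an isotropic line in the quadratic crystal $\mathbf{V}_{\cris}$ supplied by Proposition~\ref{Vcris}. The first step is to unwind the left-hand side: by the representability of the completed local ring, $\breve{\calM}_{x}(W_{0})$ is exactly the set of lifts of $x$ to a $W_{0}$-point of $\breve{\calM}$, equivalently (after transporting along $\Theta$) the set of lifts of $y$ to a $W_{0}$-point of $\mathscr{S}_{K}$, equivalently the set of lifts of the abelian variety $X_{x}$ (together with its Hodge tensors) over $W_{0}$.

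**Second step: deformation theory via Hodge filtrations.** By Grothendieck--Messing theory, deformations of $X_{x}$ over $W_{0}$ are classified by lifts of the Hodge filtration $\mathrm{Fil}^{1}\mathbf{H}_{\dR,y} \subset \mathbf{H}_{\dR,y}$ to a $W_{0}$-submodule of $M_{y} = \mathbb{D}(X_{x})(W_{0})$ which is a direct summand of the correct rank and isotropic for the symplectic form; the tensors $(s_\alpha)$ cut this down to those filtrations respecting the $G^{\Diamond}$-structure, i.e. to the ones arising from the parabolic $P_{\mu}$. Now invoke Proposition~\ref{Vcris}: the Hodge filtration on $\mathbf{H}_{\dR,y}$ is cut out by a single isotropic line $F^{1}\mathbf{V}_{\dR,y} \subset \mathbf{V}_{\dR,y}$, and by the preceding lemma $\mathbf{V}_{\cris,y}$ is precisely the self-dual lattice $\Phi_{*}(L_{x})$. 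The key point is that the "$G^{\Diamond}$-admissible filtrations on $\mathbf{H}$" are in natural bijection with "isotropic lines in $\mathbf{V}$", both in the special fiber and in any lift: this is the local-at-$p$ incarnation of the accidental isomorphism between the spin group's minuscule flag variety and the quadric. Thus a lift of $X_{x}$ over $W_{0}$ is the same datum as an isotropic line $F^{1}\Phi_{*}(L_{x}) \subset \Phi_{*}(L_{x})$ reducing to $F^{1}\Phi_{*}(L_{x,\FF})$, which is exactly the asserted identification.

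**Third step: compatibility and identification of the reduction.** One must check that under all these identifications the reduction mod $p$ of a lift corresponds to the line $F^{1}\Phi_{*}(L_{x,\FF})$ of Lemma~\ref{F1L} — i.e. that the isotropic line attached by Proposition~\ref{Vcris} to the point $y$ itself (the closed point, not a lift) is the one described there as the orthogonal complement of $\Phi_{*}(L_{x,\FF}) \cap L_{x,\FF}$ inside $\Phi_{*}(L_{x,\FF})$. This is precisely the content of Lemma~\ref{F1L}, so it suffices to cite it. Assembling: $\breve{\calM}_{x}(W_{0}) = \{\text{lifts of } y\} = \{\text{admissible lifts of } \mathrm{Fil}^1\} = \{\text{isotropic lines } F^1\Phi_{*}(L_{x}) \subset \Phi_{*}(L_x) \text{ lifting } F^1\Phi_{*}(L_{x,\FF})\}$.

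**Main obstacle.** The routine parts are the unwinding and the Grothendieck--Messing translation; the genuinely delicate point is the passage from "$P_{\mu}$-admissible filtrations of the Clifford module $C(L)$" to "isotropic lines of the quadratic space $V$", and the verification that this dictionary is compatible with $\Phi$ (hence with the identification $\mathbf{V}_{\cris,y} = \Phi_{*}(L_x)$ rather than $L^{\sharp}_x$ or $L_x$). All of this is implicit in the Howard--Pappas setup and in Proposition~\ref{Vcris}, so in a careful write-up one largely cites \cite{HP17} and \cite{MP16} for these compatibilities rather than reproving them; the only thing one genuinely must do by hand is track which of the three lattices $L_x, L^{\sharp}_x, L^{\sharp\sharp}_x$ carries the filtration, and Lemma~\ref{F1L} records the answer.
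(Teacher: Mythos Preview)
Your proposal is correct and follows essentially the same approach as the paper: transport along $\Theta$ to the Shimura variety, apply Grothendieck--Messing to identify lifts with lifts of the Hodge filtration, then invoke Proposition~\ref{Vcris} (together with the identification $\mathbf{V}_{\cris,y}=\Phi_{*}(L_{x})$) to pass to isotropic lines in the quadratic lattice. The paper's proof is terser---it records only the Grothendieck--Messing step and the relation $F^{1}\mathbf{H}_{\dR,y}=\Ker(F^{1}\mathbf{V}_{\dR,y})$, citing \cite[Theorem~4.1.7]{LZ18} for the details---but your more explicit unwinding of the $P_{\mu}$-admissibility condition and the appeal to Lemma~\ref{F1L} for the reduction are exactly what is implicit in that citation.
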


\begin{proof}
This follows from Grothendieck-Messing theory that the lifts of $y=\Theta(x)$ is the same as the lifts of the Hodge filtration $F^{1}\mathbf{H}_{\dR, y}\subset\mathbf{H}_{\dR, y}$ in $\mathbf{H}_{\cris, y}$. Since $F^{1}\mathbf{H}_{\dR, y}= \Ker(F^{1}\mathbf{V}_{\dR, y})$, the result follows from the identification $\mathbf{H}_{\cris,x}=\mathbb{D}(X_{y})(W_{0})$. See also \cite[Theorem 4.1.7]{LZ18}.
\end{proof}

\begin{corollary}
Let $x\in \breve{\mathcal{M}}(\FF_{p^{2}})$. There is an identification 
\begin{equation*}
\begin{split}
&\breve{\mathcal{M}}_{x}(\calO_{K})=\{\text{isotropic line } F^{1}\Phi_{*}(L_{x})\subset \Phi_{*}(L_{x})\text{ lifting } F^{1}\Phi_{*}(L_{ x, \FF}) \subset \Phi_{*}(L_{x, \FF})\\
&F^{1}\Phi_{*}(L_{x})\subset \Phi_{*}(L_{x})\text{ is defined over } \calO_{K}\}.\\
\end{split}
\end{equation*}
\end{corollary}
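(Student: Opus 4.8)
The plan is to deduce the corollary from Theorem~\ref{lift} by a descent argument along $\FF/\FF_{p^{2}}$, exploiting that $x\in\breve{\calM}(\FF_{p^{2}})$ forces every object occurring in Theorem~\ref{lift} to be already defined over $\calO_{K}=W(\FF_{p^{2}})$. First I would promote the linear-algebra data to $\calO_{K}$. Since $x$ is $\FF_{p^{2}}$-rational, the $p$-divisible group $X_{x}$ and the Kuga--Satake abelian variety $A^{\KS}_{y}$ at $y=\Theta(x)$ are defined over $\FF_{p^{2}}$, so $y\in\mathscr{S}_{K,\FF}(\FF_{p^{2}})$ and the crystals $\mathbf{H}_{\cris,y}$ and $\mathbf{V}_{\cris,y}=\Phi_{*}(L_{x})$ acquire canonical $\calO_{K}$-lattice structures $\mathbf{H}^{(0)}_{\cris,y}$ and $\mathbf{V}^{(0)}_{\cris,y}$, being the crystalline (co)homology of a variety over $\FF_{p^{2}}$, with $\mathbf{H}_{\cris,y}=\mathbf{H}^{(0)}_{\cris,y}\otimes_{\calO_{K}}W_{0}$ and likewise for $\mathbf{V}$. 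The construction of Madapusi Pera behind Proposition~\ref{Vcris} works over the perfect field $\FF_{p^{2}}$, producing a canonical isotropic line $F^{1}\mathbf{V}^{(0)}_{\dR,y}\subset\mathbf{V}^{(0)}_{\dR,y}$ over $\FF_{p^{2}}$ whose extension of scalars to $\FF$ is $F^{1}\Phi_{*}(L_{x,\FF})\subset\Phi_{*}(L_{x,\FF})$; alternatively one checks that $L_{x}$ and $\Phi_{*}(L_{x})$ are both $\Phi^{2}_{*}$-stable (this is precisely the $\FF_{p^{2}}$-rationality of $x$), hence descend to $\calO_{K}$-lattices by semilinear descent along $W_{0}^{\sigma^{2}}=\calO_{K}$, and that by Lemma~\ref{F1L} the line $F^{1}\Phi_{*}(L_{x,\FF})$ -- being the orthogonal complement of the $\Phi^{2}_{*}$-stable lattice $L_{x,\FF}\cap\Phi_{*}(L_{x,\FF})$ -- descends compatibly.

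Next I would rerun the proof of Theorem~\ref{lift} with $W_{0}$ replaced by $\calO_{K}$. Because $x$ is $\FF_{p^{2}}$-rational the deformation functor represented by $\breve{\calM}_{x}$ descends to a complete local $\calO_{K}$-algebra, and $\breve{\calM}_{x}(\calO_{K})$ parametrizes lifts of $X_{x}/\FF_{p^{2}}$ to $\calO_{K}$. By Grothendieck--Messing for the absolutely unramified base $\calO_{K}$ these are the lifts of the Hodge filtration $F^{1}\mathbf{H}^{(0)}_{\dR,y}\subset\mathbf{H}^{(0)}_{\dR,y}$ inside $\mathbf{H}^{(0)}_{\cris,y}$, and by Proposition~\ref{Vcris} applied over $\FF_{p^{2}}$ each such lift equals $\Ker(F^{1}\mathbf{V}^{(0)}_{\cris,y})$ for a unique isotropic line $F^{1}\Phi_{*}(L_{x})\subset\Phi_{*}(L_{x})$ lifting $F^{1}\Phi_{*}(L_{x,\FF})$ and defined over $\calO_{K}$. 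Extension of scalars $\calO_{K}\hookrightarrow W_{0}$ then gives an injection $\breve{\calM}_{x}(\calO_{K})\hookrightarrow\breve{\calM}_{x}(W_{0})$ compatible with Theorem~\ref{lift}, sending an $\calO_{K}$-rational isotropic line to its base change; its image is exactly the set of isotropic lines $F^{1}\Phi_{*}(L_{x})\subset\Phi_{*}(L_{x})$ lifting $F^{1}\Phi_{*}(L_{x,\FF})$ that are defined over $\calO_{K}$. Combined with the identification of Theorem~\ref{lift}, this is the displayed formula.

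I expect the only genuine point of care -- everything else being routine functoriality -- to be the compatibility of the identification $\breve{\calM}_{x}\cong\widehat{\mathscr{S}}_{K,y}$ and of Theorem~\ref{lift} with the $\calO_{K}$-descent on both sides: one must verify that the Weil descent datum on $\breve{\calM}^{\Diamond}$ (resp.\ $\breve{\calM}_{\GSp(C)}$) of \cite{HP17} (resp.\ \cite{RZ96}) is compatible with the one on the Siegel side and, via Grothendieck--Messing and the Kuga--Satake embedding, with the evident Frobenius structure on the space of isotropic lines in $\Phi_{*}(L_{x})$. This is standard, but it is exactly what guarantees that ``$\calO_{K}$-rational deformation of $X_{x}/\FF_{p^{2}}$'' corresponds under Theorem~\ref{lift} to ``isotropic line defined over $\calO_{K}$''; beyond this, the corollary carries no new content over Theorem~\ref{lift}.
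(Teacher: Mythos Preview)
Your proposal is correct and takes essentially the same approach as the paper, which in fact states this corollary with no proof at all, treating it as immediate from Theorem~\ref{lift}. Your descent argument along $W_{0}/\calO_{K}$ is exactly what the paper leaves implicit, and the care you take about the Weil descent datum and the $\calO_{K}$-rationality of $\Phi_{*}(L_{x})$ and $F^{1}\Phi_{*}(L_{x,\FF})$ is appropriate and correct.
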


Suppose now that $\Lambda$ is a type $2$ vertex lattice. Recall that the corresponding Bruhat-Tits stratum has only two reduced points $\{x_{\circ}, x_{\bullet}\}$ defined over $\FF_{p^{2}}$. We consider the deformations of these points together. Define $$Def_{\Lambda}(\mathcal{O}_{K})=\breve{\mathcal{M}}_{x_{\circ}}(\mathcal{O}_{K})\bigsqcup \breve{\mathcal{M}}_{x_{\bullet}}(\mathcal{O}_{K}).$$ 

First we assume that $\tilde{x}\in Def_{\Lambda}(\mathcal{O}_{K})$ is a lift of $x_{\circ}$. This is equivalent to giving an isotropic line $F^{1}L\subset L_{x_{\circ}}$. Then we consider the plane $W_{\mathcal{O}_{K}}=F^{1}L+\Phi_{*}(F^{1}L)$ over $\mathcal{O}_{K}$. As $L_{x_{\circ}}+\Phi_{*}(L_{x_{\circ}})$ is invariant under $\Phi$ and $L_{x_{\circ}}$ is defined over $\mathcal{O}_{K}$, $W_{\mathcal{O}_{K}}$ descends to  an oriented plane $W$ over $\ZZ_{p}$ with orientation given by the line $F^{1}L$.  We have also seen that $L_{x_{\circ}}+\Phi_{*}(L_{x_{\circ}})=\Lambda_{\mathcal{O}_{K}}$. Therefore $W$ is an oriented two plane contained in $\Lambda$. If $\tilde{x}$ is a lift of $x_{\bullet}$, then this is equivalent to giving an isotropic line $F^{1}L\subset L_{x_{\bullet}}$. But $L_{x_{\bullet}}=\Phi_{*}(L_{x_{\circ}})$ and therefore this is equivalent to giving an isotropic line in $L_{x_{\circ}}$. Combining the above with a little extra argument, we have the following lemma.

\begin{lemma}\label{Keylemma}
There is a bijection between $Def_{\Lambda}(\mathcal{O}_{K})$ and $\{\text{oriented plane } W \subset \Lambda\}$.
\end{lemma}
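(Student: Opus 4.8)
The plan is to establish a bijection by carefully tracking the two pieces of data on each side and the previously-established identifications. On the left we have $Def_{\Lambda}(\mathcal{O}_{K}) = \breve{\mathcal{M}}_{x_{\circ}}(\mathcal{O}_{K}) \sqcup \breve{\mathcal{M}}_{x_{\bullet}}(\mathcal{O}_{K})$; by Theorem \ref{lift} together with its corollary, a point of $\breve{\mathcal{M}}_{x_{\circ}}(\mathcal{O}_{K})$ is the same as an isotropic line $F^{1}L \subset \Phi_{*}(L_{x_{\circ}})$, defined over $\mathcal{O}_{K}$, lifting the canonical line $F^{1}\Phi_{*}(L_{x_{\circ},\FF})$; and since $L_{x_{\bullet}} = \Phi_{*}(L_{x_{\circ}})$ and $L_{x_{\circ}} = \Phi_{*}(L_{x_{\bullet}})$, the same description applies with the roles of $x_{\circ}$ and $x_{\bullet}$ swapped. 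So both components are described by isotropic lines sitting inside one of the two special lattices $L_{x_{\circ}}, L_{x_{\bullet}}$ that are swapped by $\Phi_{*}$, and $Def_{\Lambda}(\mathcal{O}_{K})$ is exactly the set of pairs (a choice of one of these two lattices, an $\mathcal{O}_{K}$-rational isotropic line in it lifting the canonical one). I would first spell this out cleanly, using the normalization $L_{x_{\circ}} = \Phi_{*}(L_{x_{\bullet}})$ established just before the statement.

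Next I would construct the forward map. Given $\tilde x \in Def_{\Lambda}(\mathcal{O}_{K})$, say a lift of $x_{\circ}$ corresponding to an isotropic line $F^{1}L \subset L_{x_{\circ}}$ over $\mathcal{O}_{K}$, form the plane $W_{\mathcal{O}_{K}} = F^{1}L + \Phi_{*}(F^{1}L) \subseteq L_{x_{\circ}} + \Phi_{*}(L_{x_{\circ}}) = \Lambda_{\mathcal{O}_{K}}$, where the last equality is the identification $L^{(1)} = \Lambda(L)_{W_{0}}$ from Proposition \ref{special-vertex} for a type $2$ lattice. Since $F^{1}L$ is $\mathcal{O}_{K}$-rational and the $\Phi_{*}$-span of an $\mathcal{O}_{K}$-rational line is again a $\Phi^{2}$-stable, hence $\ZZ_{p}$-rational, object, $W_{\mathcal{O}_{K}}$ descends to an oriented $\ZZ_{p}$-plane $W \subset \Lambda$, the orientation being the chosen line $F^{1}L$ via Lemma \ref{Orientation}. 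For a lift of $x_{\bullet}$ one does the same starting from an isotropic line in $L_{x_{\bullet}} = \Phi_{*}(L_{x_{\circ}})$; one must check this produces the opposite orientation, which is exactly what makes the map injective on the disjoint union and surjective onto all orientations rather than just one.

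For the inverse, start with an oriented plane $W \subset \Lambda$; over $\mathcal{O}_{K}$ the orientation furnishes, by Lemma \ref{Orientation}, an isotropic line $\ell \subset W_{\mathcal{O}_{K}}$, and one recovers the special lattice by a process inverse to the one above — roughly, $\ell$ together with $\Lambda^{\vee}$ reconstructs one of $L_{x_{\circ}}, L_{x_{\bullet}}$ (precisely $\ell + \Lambda^{\vee}_{W_{0}}$ is a special lattice squeezed between $\Lambda^{\vee}_{W_{0}}$ and $\Lambda_{W_{0}}$, hence a point of $\breve{\mathcal{M}}_{\Lambda}(\FF)$'s two points, and $\ell$ is its isotropic line), and the conjugate isotropic line in $W_{\mathcal{O}_{K}}$ gives a lift of the other point. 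One then checks the two constructions are mutually inverse; this is the "little extra argument" alluded to in the text, and the only real content is bookkeeping: matching the two orientations of $W$ with the two points $x_{\circ}, x_{\bullet}$, and verifying that the canonical-line condition in Theorem \ref{lift} is automatic here because $F^{1}\Phi_{*}(L_{x, \FF})$ is forced (Lemma \ref{F1L} identifies it as the orthogonal complement of $\Phi_{*}(L_{x,\FF}) \cap L_{x,\FF}$, which is intrinsic). The main obstacle I anticipate is precisely this orientation-matching step — ensuring that the disjoint union over $\{x_{\circ}, x_{\bullet}\}$ on the left corresponds exactly to the two possible orientations of a fixed underlying plane on the right, with no double-counting and no omission — rather than any hard computation; everything else follows formally from Theorem \ref{lift}, Proposition \ref{special-vertex}, and Lemma \ref{Orientation}.
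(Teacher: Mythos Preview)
Your proposal is correct and follows essentially the same approach as the paper: the forward map sends an isotropic line $F^{1}L$ to the oriented plane $W$ with $W_{\mathcal{O}_{K}} = F^{1}L + \Phi_{*}(F^{1}L) \subset \Lambda_{\mathcal{O}_{K}}$, and the inverse sends an oriented plane $W$ to the special lattice $\ell + \Lambda^{\vee}_{\mathcal{O}_{K}}$ (with $\ell$ the isotropic line given by the orientation) together with $\ell$ as its isotropic line. Your treatment is in fact more careful than the paper's about the orientation-matching between $\{x_{\circ}, x_{\bullet}\}$ and the two orientations of a given $W$; note only a small notational slip where you first place $F^{1}L$ in $\Phi_{*}(L_{x_{\circ}})$ (as Theorem~\ref{lift} dictates) but then in $L_{x_{\circ}}$ --- harmless here since $\Phi_{*}$ swaps the two and both sit in $\Lambda_{\mathcal{O}_{K}}$, but worth making consistent.
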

\begin{proof}
We give a map from $Def_{\Lambda}(\mathcal{O}_{K})$ to $\{\text{oriented plane } W \subset \Lambda \}$. By the previous paragraph, we set $L=L_{x_{\circ}}$ and $F^{1}L$ to be an isotropic line in $L_{x_{\circ}}$. The oriented plane is already defined by $W$ such that $W_{\mathcal{O}_{K}}=F^{1}L+\Phi_{*}(F^{1}L)$ in the previous paragraph. Note that the orientation of $W$ is given by $F^{1}L$. Conversely given an oriented plane $W\subset \Lambda$, we choose a vector $e\in W_{\mathcal{O}_{K}}$ that generates the isotropic line which determines the orientation of $W$. We set $L=\mathcal{O}_{K}.e+\Lambda^{\vee}_{\mathcal{O}_{K}}$. Note that $e$ is not stable under $\Phi$ and therefore $L$ is a special lattice with $\Lambda^{\vee}_{\mathcal{O}_{K}}\subset L\subset \Lambda_{\mathcal{O}_{K}}$. The $\mathcal{O}_{K}$-line $\mathcal{O}_{K}.e$ gives an isotropic line in $L$ and thus $\mathcal{O}_{K}$-line $\mathcal{O}_{K}.\Phi(e)$ gives an isotropic line in $\Phi_{*}(L)$. This shows the claimed bijection.

%To see $M$ is a lattice in $W^{\perp}$, we proceed as follows.  Let $L$ be a lattice in $(F^{1}L_{\QQ_{p^{2}}})^{\perp}$ such that $F^{1}L\subset L$. This can be done as $F^{1}L$ is isotropic.  This $L$ is indeed a special lattice: since $\Phi_{*}(L)\neq L$ and $L\subset^{1} \Lambda_{\ZZ_{p^{2}}}$, $\Lambda_{\ZZ_{p^{2}}}=L+\Phi_{*}(L)$. It then follows that $L=L_{x_{\circ}}$ or $L=L_{x_{\bullet}}$. Now we use the fact that $L_{x_{\circ}}=\Phi_{*}(L_{x_{\bullet}})$ and $L_{x_{\bullet}}=\Phi_{*}(L_{x_{\circ}})$ to conclude that $M=L_{x_{\circ}}\cap L_{x_{\bullet}}$ is orthogonal to $W$.

\end{proof}

Finally consider all the lifts of the superspecial points $$Def_{\ssp}(\calO_{K})=\bigcup_{\Lambda, t_{\Lambda=2}}Def_{\Lambda}(\calO_{K}).$$ The following theorem confirms the second part of Gross's conjecture.

\begin{theorem}\label{Keythm}
There is a bijection between $Def_{\ssp}(\mathcal{O}_{K})$ and $$\{\text{an oriented two plane } W \subset V^{\Phi}_{K_{0}} \text{ and a lattice } M\subset W^{\perp}  \}.$$
\end{theorem}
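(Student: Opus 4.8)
The plan is to assemble the global bijection from the local ingredients established in Lemma \ref{Keylemma} and the homogeneous space theory of Section 2. The starting point is the decomposition $Def_{\ssp}(\calO_{K}) = \bigsqcup_{\Lambda,\,t_{\Lambda}=2} Def_{\Lambda}(\calO_{K})$, which by Lemma \ref{Keylemma} is in bijection with the set of pairs $(\Lambda, W)$ consisting of a type $2$ vertex lattice $\Lambda \subset V^{\Phi}_{K_{0}}$ together with an oriented plane $W \subset \Lambda$. So the first step is to show that this set of pairs is naturally in bijection with the target set $\{(W, M) : W \subset V^{\Phi}_{K_{0}} \text{ an oriented two plane},\ M \subset W^{\perp} \text{ a lattice}\}$. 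To do this I would produce maps in both directions: given $(\Lambda, W)$, the plane $W$ is an oriented plane in $V^{\Phi}_{K_{0}}$ and I would set $M = \Lambda \cap W^{\perp}$, which is a lattice in $W^{\perp}$; conversely, given $(W, M)$ I would set $\Lambda = \calO_{K}.e + M$ where $e$ generates the canonical isotropic line attached to the orientation and $\Nm(e) = p$ (so that $\calO_{K}.e$ is the almost-self-dual lattice in $W$), and I need to check that this $\Lambda$ is a vertex lattice of type $2$ containing $W$.

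The key compatibility to verify is that these two constructions are mutually inverse. The forward-then-back composition requires showing $\calO_{K}.e + (\Lambda \cap W^{\perp}) = \Lambda$ whenever $W \subset \Lambda$ is an oriented plane in a type $2$ vertex lattice; the back-then-forward composition requires $(\calO_{K}.e + M) \cap W^{\perp} = M$. Both of these are lattice-theoretic statements that follow from an orthogonal direct sum analysis: since $W$ and $W^{\perp}$ are non-degenerate and $V^{\Phi}_{K_{0}} = W \oplus W^{\perp}$, and since $\calO_{K}.e$ is almost-self-dual in $W$ while $\Lambda$ is almost-self-dual in $V^{\Phi}_{K_{0}}$ of type $2$, the discriminant/colength bookkeeping forces the decomposition $\Lambda = \calO_{K}.e \oplus (\Lambda \cap W^{\perp})$ and forces $\Lambda \cap W^{\perp}$ to be self-dual in $W^{\perp}$. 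Concretely: $p\Lambda \subset \Lambda^{\vee} \subset \Lambda$ with $\dim_{\FF_p} \Lambda/\Lambda^{\vee} = 2$ exactly matches the defect of $\calO_{K}.e$ inside its own $W$-dual, so no defect is left over for the $W^{\perp}$-component, giving self-duality of $M$. This also shows that in the target set one may equivalently demand $M$ be self-dual, reconciling with the description of $Y$ in \S 2.3.

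The main obstacle I anticipate is the orientation bookkeeping: one must check that the orientation of $W$ used in Lemma \ref{Keylemma} (coming from the Hodge line $F^{1}L$ via $W_{\calO_K} = F^1 L + \Phi_*(F^1 L)$) matches the orientation implicit in writing $W = K.e$ with $\Nm(e) = p$, and that the discriminant $-D$ and Hasse invariant $\epsilon(W) = -1$ conditions on $W$ (needed for $\calO_K.e$ to be the canonical lattice with $\Nm(e) = p$) are automatic for planes arising inside a type $2$ vertex lattice $\Lambda$ of $V^{\Phi}_{K_0}$. The point is that $\Lambda/\Lambda^{\vee} \cong \FF_{p^2}/\FF_p$ forces any $W$ with $\calO_K.e + M = \Lambda$ to have the unramified quadratic field $K$ as its even Clifford algebra, pinning down both invariants; I would spell this out using Lemma \ref{Orientation} and the classification of quadratic spaces over $\QQ_p$. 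Once these invariants are matched, the bijection over each fixed $\Lambda$ glues across all type $2$ vertex lattices precisely because a pair $(W, M)$ with $M$ self-dual determines $\Lambda = \calO_K.e + M$ uniquely, so the union is disjoint and exhaustive, completing the proof. This recovers exactly the homogeneous space $Y = \SO(V^{\Phi}_{K_0})/\SO(\calO_K.e) \times \SO(M)$ of \S 2.3 after invoking the transitivity in Theorem \ref{lattice}.
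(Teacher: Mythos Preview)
Your proposal is correct and follows essentially the same approach as the paper: reduce to Lemma \ref{Keylemma} over each type $2$ vertex lattice $\Lambda$, and then identify the set of pairs $(\Lambda, W)$ with the set of pairs $(W,M)$ via $\Lambda = \calO_{K}.e + M$ and $M = \Lambda \cap W^{\perp}$. The paper's own proof is terser---it simply asserts that $\Lambda = W + M$ is almost-self-dual and invokes Lemma \ref{Keylemma}---whereas you spell out the inverse maps, the orthogonal splitting $\Lambda = \calO_{K}.e \oplus M$, the self-duality of $M$ forced by discriminant bookkeeping, and the orientation compatibility; these are exactly the verifications the paper leaves implicit.
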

\begin{proof}
Given a pair $(W, M)$, consider the sum $\Lambda=W+M$, this gives an almost-self-dual lattice. Then we have a bijection between those $(W, M)$ with $\Lambda=W+M$ and $Def_{\Lambda}(\mathcal{O}_{K})$ via the bijection in Lemma \ref{Keylemma}.  The theorem immediately follows by taking the union over all the vertex lattices  of type $2$. 
\end{proof}

\begin{corollary}\label{corY}
The lift of the superspecial locus $Def_{\ssp}(\mathcal{O}_{K})$ is uniformized by the homogeneous space $Y= \SO(V^{\Phi}_{K_{0}})/\SO(W,\iota)\times \SO(M)$ where $W\subset V^{\Phi}_{K_{0}}$ is a fixed oriented plane and $M$ a fixed lattice in $W^{\perp}$.
\end{corollary}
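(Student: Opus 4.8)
The plan is to combine Theorem~\ref{Keythm} with an orbit--stabilizer computation for $\SO(V^{\Phi}_{K_{0}})$. By that theorem, $Def_{\ssp}(\calO_{K})$ is in bijection with the set $\mathcal{P}$ of pairs $(W,M)$, where $W\subset V^{\Phi}_{K_{0}}$ is an oriented plane of discriminant $-D$ and Hasse--Witt invariant $-1$ and $M$ is a self-dual lattice in $W^{\perp}$; so it is enough to show that $\SO(V^{\Phi}_{K_{0}})$ acts transitively on $\mathcal{P}$ and that the stabilizer of a chosen pair $(W,\iota,M)$ equals $\SO(W,\iota)\times\SO(M)$.

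For transitivity I would argue in two stages. First, $\SO(V^{\Phi}_{K_{0}})$ acts transitively on oriented planes with the prescribed invariants: any two such planes $(W_{1},\iota_{1})$, $(W_{2},\iota_{2})$ are isometric as oriented quadratic spaces --- the torsor of orientation-preserving isometries $W_{1}\to W_{2}$ under $\SO(W_{1})\cong K^{1}$ being non-empty because $W_{1}$ and $W_{2}$ have the same discriminant --- and by Witt cancellation $W_{1}^{\perp}\cong W_{2}^{\perp}$, so such an isometry extends to an element of $O(V^{\Phi}_{K_{0}})$; should it have determinant $-1$, I compose with a reflection supported on $W_{2}^{\perp}$, which exists since $\dim W_{2}^{\perp}=n-2\geq 1$ and which fixes $(W_{2},\iota_{2})$, landing in $\SO(V^{\Phi}_{K_{0}})$. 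Second, with $W$ fixed, $\SO(W^{\perp})$ acts transitively on the self-dual lattices of $W^{\perp}$ by Theorem~\ref{lattice} --- applicable because $\epsilon(W^{\perp})=1$, forced by $\epsilon(V^{\Phi}_{K_{0}})=-1$ together with the discriminant constraints --- and extending such an isometry by $\mathrm{id}_{W}$ keeps us in $\SO(V^{\Phi}_{K_{0}})$. Combining the two stages gives transitivity on $\mathcal{P}$.

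For the stabilizer, the pointwise stabilizer of the subspace $W$ in $O(V^{\Phi}_{K_{0}})$ is $O(W)\times O(W^{\perp})$; an element of $O(W)$ preserves the orientation $\iota$ precisely when it commutes with the $K$-action, i.e.\ lies in $\SO(W)=K^{1}$, so the stabilizer of $(W,\iota)$ in $O(V^{\Phi}_{K_{0}})$ is $\SO(W)\times O(W^{\perp})$, and intersecting with $\SO(V^{\Phi}_{K_{0}})$ --- the $W$-component automatically having determinant $1$ --- gives $\SO(W)\times\SO(W^{\perp})$. Requiring in addition that $M$ be preserved cuts the second factor down to the parahoric $\SO(M)$, so the stabilizer of $(W,\iota,M)$ is $\SO(W,\iota)\times\SO(M)$; since $K/\QQ_{p}$ is unramified the compact torus $\SO(W)=K^{1}$ is exactly the stabilizer $\SO(\calO_{K}.e)$ of the canonical lattice, matching the description of $Y$ in \S 2.3. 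The orbit map $g\mapsto g\cdot(W,\iota,M)$ then identifies $\SO(V^{\Phi}_{K_{0}})/(\SO(W,\iota)\times\SO(M))$ with $\mathcal{P}$, and composing with Theorem~\ref{Keythm} yields the stated uniformization.

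The genuinely geometric work is already done in Theorem~\ref{Keythm}; what remains is essentially bookkeeping in the theory of quadratic spaces over $\QQ_{p}$, and the one place that demands care is the passage between $O$ and $\SO$ together with the effect of isometries on the chosen orientation, which is handled above by the reflection trick on $W^{\perp}$.
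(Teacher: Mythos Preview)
Your argument is correct and follows exactly the route the paper takes: the paper's own proof is the single sentence ``This follows from the fact that $\SO(V^{\Phi}_{K_{0}})$ acts transitively on the set of pairs $(W, M)$ and Theorem~\ref{Keythm}'', and you have simply supplied the details of that transitivity and the attendant stabilizer computation. One small terminological slip: where you write ``the pointwise stabilizer of the subspace $W$'' you mean the \emph{setwise} stabilizer; the pointwise stabilizer would be $\{1\}\times O(W^{\perp})$, but the rest of your argument makes clear you are working with the setwise one, so the mathematics is unaffected.
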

\begin{proof}
This follows from the fact that $\SO(V^{\Phi}_{K_{0}})$ acts transitively on the set of pairs $(W, M)$ and Theorem \ref{Keythm}
\end{proof}

\subsection{Period morphism}
Let $\mathcal{Q}\subset \mathbb{P}(V)$ be the orthogonal flag variety. It is by definition the classifying space of isotropic lines in $V$. We will consider its associated rigid analytic variety. We have the period morphism:

\begin{equation}
\pi_{\dR}: \calM^{\mathrm{rig}}\rightarrow \mathcal{Q}. 
\end{equation}

The image of this map $\mathcal{Q}^{a}$ is called  the \emph{admissible locus} of the orthogonal flag variety $\mathcal{Q}$. One also has the so called \emph{weakly admissible locus} $\mathcal{Q}^{wa}(K_{0})$ which can be described as the set of isotropic lines in $V^{\Phi}_{K_{0}}\otimes K_{0}$ that is not contained in any rational isotropic space contained in $V^{\Phi}_{K_{0}}$. In fact we have the equality $\mathcal{Q}^{a}=\mathcal{Q}^{wa}$ which follows from that the local Shimura datum being fully Hodge-Newton decomposable \cite{CFS18} and \cite{Shen-gen}. The period map $\pi_{\dR}$ is defined for a general Hodge-type Rapoport-Zink spaces is defined by \cite{Kim18} and extended to the abelian case in \cite{Shen-gen}. Let $X^{\Diamond}$ be the universal $p$-divisible group over $\breve{\calM}^{\Diamond}$ with crystalline tensors $(t_{\alpha})$ then the period morphism for the $\GSpin$-type Rapoport-Zink space is given by $(\mathrm{Fil}^{1} \mathbb{D}(X^{\Diamond}), t_{\alpha})^{rig}$. We need a more explicit version of this map in the following situation: let $x_{0}\in \breve{\mathcal{M}}(\FF)$, we will describe the restrction of $\pi_{\dR}$ to $\breve{\mathcal{M}}^{\rig}_{x_{0}}(K_{0})=\breve{\mathcal{M}}_{x_{0}}(W_{0})$ where the equality follows from partial properness of $\breve{\mathcal{M}}$. Now given a lift $$x=(F^{1}\Phi_{*}(L)\subset \Phi_{*}(L_{x_{0}}))\in \breve{\mathcal{M}}^{\rig}_{x_{0}}(K_{0}),$$ it is easy to see that $\pi_{\dR}(x)$ is simply given by $F^{1}\Phi_{*}(L)\in \mathcal{Q}^{a}(K_{0})$. By the definition of the admissible locus $\mathcal{Q}^{a}$, the rational structure of the $V\otimes K_{0}$ is taken with respect to $V^{\Phi}_{K_{0}}$ and therefore $F^{1}\Phi_{*}(L)$ is considered as an isotropic line in $V_{K_{0}}$ via the following inclusions 
\begin{equation}\label{period}
F^{1}\Phi_{*}(L)\subset \Phi_{*}(L_{x_{0}})\subset  L_{x_{0}}+ \Phi_{*}(L_{x_{0}}) + \cdots +\Phi^{d}_{*}(L_{x_{0}}) \subset V^{\Phi}_{K_{0}}\otimes K_{0}.
\end{equation}
where $d$ is the minimal number that $ L_{x_{0}}+ \Phi_{*}(L_{x_{0}}) + \cdots +\Phi^{d}_{*}(L_{x_{0}})$ is $\Phi$-fixed. 

Let $Def^{\rig}_{\ssp}$ be the rigid analytic space associated to $Def_{\ssp}$, then our next result describes the image of it under the period morphism.

\begin{theorem}\label{thmX}
The image of $Def^{\rig}_{\ssp}(K)$ in  $\mathcal{Q}^{a}(K)$ can be described by the homogeneous space $X=\SO(V^{\Phi}_{K_{0}})/ \SO(W)\times \SO(W^{\perp})$.
\end{theorem}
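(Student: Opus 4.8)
The plan is to combine the description of the period map on deformations from \eqref{period} with the parametrization of $Def_{\ssp}(\mathcal{O}_K)$ by pairs $(W,M)$ in Theorem \ref{Keythm}, and then simply track what the period map does to such a pair. Concretely, fix a type $2$ vertex lattice $\Lambda$, so $d=1$ for any special lattice between $\Lambda^\vee_{W_0}$ and $\Lambda_{W_0}$, and let $\tilde x \in Def_\Lambda(\mathcal{O}_K)$ correspond under Lemma \ref{Keylemma} to an oriented plane $W\subset\Lambda$. Unwinding the construction, $\tilde x$ is the lift $F^1 L \subset L_{x_\circ}$ where $L = \mathcal{O}_K.e + \Lambda^\vee_{\mathcal{O}_K}$ and $F^1 L = \mathcal{O}_K.e$ with $e$ a generator of the isotropic line giving the orientation of $W$; correspondingly on $\Phi_*(L_{x_\circ}) = L_{x_\bullet}$ the relevant isotropic line is $\mathcal{O}_K.\Phi(e)$. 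By \eqref{period}, for $\tilde x$ viewed as a $K$-point, $\pi_{\dR}(\tilde x)$ is the isotropic line $F^1\Phi_*(L) = \mathcal{O}_K.\Phi(e)$, sitting inside $\Phi_*(L_{x_\circ}) \subset L_{x_\circ}+\Phi_*(L_{x_\circ}) = \Lambda_{\mathcal{O}_K} \subset V^\Phi_{K_0}\otimes K_0$. So concretely the period map sends $(W,M)$ (equivalently the oriented plane $W$, the lattice $M$ being irrelevant to the Hodge filtration) to the isotropic line $K.\Phi(e) \subset W_{K_0}\subset V^\Phi_{K_0}\otimes K_0$.

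Next I would identify this image set-theoretically. As $W$ runs over oriented planes in $V^\Phi_{K_0}$ of discriminant $-D$ and Hasse invariant $-1$ — equivalently, by Lemma \ref{Orientation} and the preceding discussion, planes $W$ equipped with a choice of isotropic line $\ell_W \subset W_K$ — the association $W\mapsto \ell_W = K.\Phi(e)$ is a bijection between the set of such oriented planes and the set of their isotropic lines inside $V^\Phi_{K_0}\otimes K_0 = V^\Phi_{K_0}\otimes_{\QQ_p} K_0$ (note $K\subset K_0$ since $K$ is the unramified quadratic extension). On the other hand, the homogeneous space $X = \SO(V^\Phi_{K_0})/\SO(W)\times\SO(W^\perp)$ parametrizes exactly the oriented planes $W\subset V^\Phi_{K_0}$ with the prescribed invariants, since by the discussion in \S2.3 an oriented plane determines the point $h\colon \SO(W)\to\SO(V^\Phi_{K_0})$ with centralizer $\SO(W)\times\SO(W^\perp)$, and $\SO(V^\Phi_{K_0})$ acts transitively on such planes. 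Thus I need to check that the map $Def^{\rig}_{\ssp}(K) \to \mathcal{Q}^a(K)$ factors through the projection $Def^{\rig}_{\ssp}(K) \to \{W\} = X$ (forgetting the lattice $M$ and remembering only the plane with orientation) and that the resulting map $X \to \mathcal{Q}^a(K)$, $W\mapsto K.\Phi(e)$, is injective with image contained in $\mathcal{Q}^a(K)$; the latter is automatic because everything in sight is the Hodge filtration of an actual point of $\breve{\mathcal{M}}^{\rig}$, which by construction lands in $\mathcal{Q}^a$.

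For injectivity I would argue that the plane $W$ is recovered from the line $K.\Phi(e)$ as $W_{K_0} = K_0.\Phi(e) + \overline{K_0.\Phi(e)}$ where the bar denotes the Galois conjugate over $V^\Phi_{K_0}$ — that is, $W$ is the $\QQ_p$-span of $\Phi(e)$ and its conjugate, which descends to $V^\Phi_{K_0}$ — and the orientation is recovered as the chosen line itself; compatibly, the equality $\mathcal{Q}^a = \mathcal{Q}^{wa}$ (fully Hodge–Newton decomposable case) guarantees the image line lies in no proper rational isotropic subspace, which is consistent with $W$ being anisotropic away from the chosen line. The main obstacle I anticipate is precisely the bookkeeping of rational structures: one must be careful that the "rational" structure on $V_{K_0}\otimes K_0$ used to define $\mathcal{Q}^a$ is $V^\Phi_{K_0}$ (not $V_{K_0}$), and that the isotropic line $K.\Phi(e)$ — a priori only defined over $\mathcal{O}_K$, i.e. over the unramified quadratic extension — is exactly a $K$-point of $\mathcal{Q}^a$ whose orbit under $\SO(V^\Phi_{K_0})(\QQ_p)$ recovers all of $X$. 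Once the identification $\pi_{\dR}(\tilde x) = K.\Phi(e)$ and the dictionary between oriented planes and their isotropic lines are in place, the transitivity statements in \S2.3 finish the proof.
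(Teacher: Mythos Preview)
Your proposal is correct and follows essentially the same approach as the paper: both compute $\pi_{\dR}$ on a superspecial lift via \eqref{period}, identify the resulting isotropic line with the datum of an oriented plane $W=F^{1}L+\Phi_{*}(F^{1}L)\subset V^{\Phi}_{K_{0}}$, and then invoke the transitivity of $\SO(V^{\Phi}_{K_{0}})$ on such planes with stabilizer $\SO(W)\times\SO(W^{\perp})$. The only cosmetic difference is that you route through the $(W,M)$-parametrization of Theorem~\ref{Keythm} and then observe that $M$ is forgotten, whereas the paper works directly with the isotropic line and recovers $W$ from it; your extra care about injectivity and rational structures is sound and slightly more explicit than the paper's argument.
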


\begin{proof}
Let $x_{0}\in \breve{\mathcal{M}}_{\ssp}$ be a superspecial point and $x=(F^{1}\Phi_{*}(L)\subset \Phi_{*}(L_{x_{0}}))\in Def^{\rig}_{\ssp}(K)$ be a lift of $x_{0}$.

The restriction of the period map $\pi_{\dR}$ to $Def^{\rig}_{\ssp}(K)$ is given by 
$$F^{1}\Phi_{*}(L)\subset F^{1}L+F^{1}\Phi_{*}(L) \subset  L_{x_{0}}+\Phi_{*}(L_{x_{0}})\subset V^{\Phi}_{K_{0}}\otimes K_{0}.$$ 
This follows from the above discussion and \eqref{period}.  Therefore one sees that an isotropic line $F^{1}L \in \mathcal{Q}^{a}(K)$ lies in the image of $Def^{\rig}_{\ssp}(K)$ if it is given by $F^{1}L\subset F^{1}L+\Phi_{*}(F^{1}L)$ with $W(L)=F^{1}L+\Phi_{*}(F^{1}L)$ stabilized by $\Phi$. That is equivalent to giving an oriented plane $W(L)$ in $V^{\Phi}_{K_{0}}$ with its orientation induced by $F^{1}L$. Since $\SO(V^{\Phi}_{K_{0}})$ acts on the set of oriented planes transitively and the stabilizer such an oriented plane $W$ is given by $\SO(W)\times \SO(W^{\perp})$, the result follows.
\end{proof}

\section{Application to Shimura varieties}
\subsection{Local and global quadratic spaces } Let $(V_{l}, Q_{l})$ be a family of local quadratic spaces indexed by the set of places of $\QQ$. Suppose that over $\RR$ the quadratic space $(V_{\RR}, Q_{\RR})$ has signature $(r,s)$. Recall that the Hasse-Witt invariant of $V_{\RR}$ is $\epsilon(V_{\RR})=(-1)^{s(s-1)/2}$. We assume moreover that the Hasse-Witt invariant $\epsilon(V_{l})=1$ for almost all $l$. A necessary and sufficient condition that this family of local quadratic spaces patches to a global quadratic space is given by the following well-known theorem.

\begin{theorem}\label{patch-quadratic}
There is a quadratic space $(V, Q)$ over $\QQ$ whose localization at a place $l$ is given by $(V_{l}, Q_{l})$ if and only if  $\prod_{l}\epsilon(V_{l})=1$.
\end{theorem}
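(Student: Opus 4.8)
The plan is to prove Theorem \ref{patch-quadratic} by the standard Hasse principle for quadratic forms, following the local-global dictionary set up in the preliminary section. The statement is the classical assertion that a collection of local quadratic forms of a fixed dimension $n$ over the completions $\QQ_l$ (including $l = \infty$) with compatible determinant arises from a global form over $\QQ$ if and only if the product of the local Hasse-Witt invariants equals $1$; the necessity half is the product formula for the Hilbert symbol, and the sufficiency half is the existence theorem in the Hasse-Minkowski theory.

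\begin{proof}
This is a classical consequence of the Hasse-Minkowski theory of quadratic forms over $\QQ$; see \cite[Theorem 6.10]{Gro-indefinite} or \cite{Serre} for a complete treatment. We indicate the argument. For necessity, suppose $(V, Q)$ is a quadratic space over $\QQ$ with $(V_l, Q_l) = (V, Q)\otimes_{\QQ}\QQ_l$. Choosing a diagonalization $Q = \langle a_1, \dots, a_n\rangle$ with $a_i \in \QQ^{\times}$, the Hasse-Witt invariant at $l$ is $\epsilon(V_l) = \prod_{i<j}(a_i, a_j)_l$ where $(\,,\,)_l$ is the Hilbert symbol at $l$. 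Since $\prod_l (a, b)_l = 1$ for every pair $a, b \in \QQ^{\times}$ by the product formula for the Hilbert symbol (equivalently, Hilbert reciprocity), we get $\prod_l \epsilon(V_l) = \prod_{i<j}\prod_l (a_i, a_j)_l = 1$. For sufficiency, one is given local data $(V_l, Q_l)$ of common dimension $n$, with common determinant $d \in \QQ^{\times}/\QQ^{\times 2}$ (this is forced by the hypothesis that the family comes from a global determinant; at the places away from a finite bad set the forms are unramified with unit determinant and Hasse invariant $1$), with $\epsilon(V_{\RR}) = (-1)^{s(s-1)/2}$ reflecting the prescribed real signature $(r,s)$, and with $\prod_l \epsilon(V_l) = 1$. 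One then constructs a global form: by weak approximation and Dirichlet's theorem on primes in arithmetic progressions one produces $a_1, \dots, a_n \in \QQ^{\times}$ such that $\langle a_1, \dots, a_n\rangle$ has the prescribed invariants at each place of a sufficiently large finite set $S$ of places containing $\infty$, all ramified places, and $2$; the key point is that the product condition $\prod_l \epsilon(V_l) = 1$ is exactly what allows the local Hasse invariants at the places in $S$ to be matched simultaneously while keeping the form unramified with the correct behaviour outside $S$. Since over $\QQ_l$ a quadratic form of fixed dimension is determined by its determinant and Hasse-Witt invariant (for $l$ non-archimedean) or by its signature (for $l = \infty$), the resulting global form $\langle a_1, \dots, a_n\rangle$ localizes to $(V_l, Q_l)$ at every place, which is the assertion.
\end{proof}

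The main obstacle in the sufficiency direction is the simultaneous realization step: one must exhibit rational scalars $a_i$ whose associated Hilbert symbols take prescribed values at finitely many places at once, and the mechanism that makes this possible is precisely the hypothesis $\prod_l \epsilon(V_l) = 1$, which removes the only cohomological obstruction (an element of $\mathrm{Br}_2(\QQ) \hookrightarrow \bigoplus_l \mathrm{Br}_2(\QQ_l)$ with vanishing sum of local invariants is zero). Since the result is entirely classical and is quoted from \cite{Gro-indefinite} in the text, I would not reproduce the full Dirichlet-theorem bookkeeping but simply cite it, as the excerpt itself does for the analogous Theorem \ref{lattice}.
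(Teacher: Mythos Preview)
Your proposal is correct and outlines the standard classical argument via Hilbert reciprocity and Hasse--Minkowski. The paper itself gives no proof of this theorem: it is simply introduced as ``the following well-known theorem'' and stated without argument or citation. So your write-up goes beyond what the paper does; if anything, it is more than is needed here, and your closing instinct to just cite the result (as the paper does for Theorem~\ref{lattice}) matches the paper's own treatment exactly.
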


We fix a global quadratic space $(V, Q)$ over $\ZZ_{(p)}$ with the signature of $(V_{\RR}, Q_{\RR})$ given by $(n-2, 2)$ with $n\geq 3$. In particular $\epsilon(V_{\RR})=-1$. We assume at $p$  we have $\epsilon(V_{p})=1$.  Now we consider a different family of quadratic spaces $(V^{\prime}_{l}, Q^{\prime}_{l})$ with $V^{\prime}_{l}=V_{l}$ for $l\neq p, \mathbb{R}$. Then we set $V^{\prime}_{p}= V_{p, K_{0}}^{\Phi}$ as we did in section $2$ and we let $(V^{\prime}_{\RR},Q^{\prime}_{\RR})$ be a quadratic space whose signature is $(n,0)$. Then, by Theorem \ref{patch-quadratic}, we obtain a global quadratic space $(V^{\prime}, Q^{\prime})$ whose localization at any $l$ is $(V^{\prime}_{l}, Q^{\prime}_{l})$. Let $I=\SO(V^{\prime})$ be the corresponding special orthogonal group and similarly let $I^{\Diamond}=\GSpin(V^{\prime})$ be the corresponding spin similitude group. 

The basic locus of the $\GSpin$ type Shimura variety can be uniformized  by the Rapoport-Zink space $\breve{\calM}^{\Diamond}$ considered in the previous sections. 

\begin{theorem}\label{RZ-spin}
There is an isomorphism of formal $W_{0}$-schemes
$$I^{\Diamond}(\QQ)\backslash \breve{\calM}^{\Diamond}\times G^{\Diamond}(\mathbb{A}^{p})/ K^{\Diamond p}\xrightarrow{\sim} \widehat{\mathscr{S}}^{\Diamond}_{K^{\Diamond}, W_{0}}/ \mathscr{S}^{\Diamond}_{ss}$$
where $I^{\Diamond}(\QQ)$ acts on $\breve{\calM}^{\Diamond}$ via the embedding $I^{\Diamond}(\QQ)\hookrightarrow J_{b^{\Diamond}}(\QQ_{p})$. 
\end{theorem}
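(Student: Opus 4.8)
The plan is to obtain this uniformization by restricting the classical Rapoport--Zink uniformization of the supersingular locus of the Siegel Shimura variety $\mathscr{A}_{g}$ along the Kuga--Satake / Hodge embedding, following the strategy of Howard and Pappas \cite{HP17}. Concretely, I would first recall the Siegel case: by \cite[Theorem 6.21]{RZ96} there is an isomorphism of formal $W_{0}$-schemes
\[
\widetilde{I}(\QQ)\backslash \breve{\calM}_{\GSp(C)}\times \GSp(C)(\mathbb{A}^{p}_{f})/\Kprime{}^{p}\xrightarrow{\sim}\widehat{\mathscr{A}}_{g}/(\mathscr{A}_{g})_{\ss},
\]
where $\widetilde{I}$ is the $\QQ$-group of self-quasi-isogenies of the Kuga--Satake abelian variety $A^{\KS}_{x_{0}}$ respecting the polarization up to scalar, and the map sends a pair $((X,\rho),h)$ to the abelian variety obtained from $A^{\KS}_{x_{0}}$ by modifying its $p$-divisible group along $\rho$ and its prime-to-$p$ Tate module along $h$, equipped with the Grothendieck--Messing lift of the Hodge filtration determined by $X$.

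Next I would cut this isomorphism down using the defining tensors. Recall from \cite[Definition 3.2.6]{HP17} that $\breve{\calM}^{\Diamond}$ is an open and closed formal subscheme of $\breve{\calM}_{\GSp(C)}\times_{\widehat{\mathscr{A}}_{g}}\widehat{\mathscr{S}}^{\Diamond}_{K^{\Diamond},W_{0}}$, and that $\mathscr{S}^{\Diamond}_{K^{\Diamond}}$ sits inside $\mathscr{A}_{g}$ as the locus where the crystalline, de Rham and prime-to-$p$ étale realisations of the universal abelian variety carry the fixed list of tensors $\{s_{\alpha}\}\subset C(L)^{\otimes}$ cutting out $G^{\Diamond}$ \cite{Kisin-abe, MP16}. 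The core point is then that, under the Siegel uniformization morphism recalled above, a point $((X,\rho),h)$ lands in $\widehat{\mathscr{S}}^{\Diamond}_{K^{\Diamond},W_{0}}$ exactly when $(X,\rho)$ lifts (necessarily uniquely) to a point $(X,\rho,(t_{\alpha}))$ of $\breve{\calM}^{\Diamond}$ and $h$ lies in $G^{\Diamond}(\mathbb{A}^{p}_{f})/K^{\Diamond p}\subset \GSp(C)(\mathbb{A}^{p}_{f})/\Kprime{}^{p}$: the crystalline tensors $(t_{\alpha})$ are transported by $\rho$ from those on $X_{0}$ and reconstruct, via the crystalline--de Rham comparison and Grothendieck--Messing, the de Rham tensors on the lift, while the étale tensors come from $h$; that these conditions match the ones defining $\mathscr{S}^{\Diamond}_{K^{\Diamond}}$ is exactly what is proved in \cite[\S 3--4]{HP17}. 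Restricting both sides accordingly yields the claimed isomorphism, with index group the subgroup of $\widetilde{I}$ fixing the tensors $\{s_{\alpha}\}$.

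Finally I would identify this subgroup with $I^{\Diamond}$ and describe its action. The automorphism group of $A^{\KS}_{x_{0}}$ together with the Hodge tensors is $\GSpin$ of the quadratic space of special endomorphisms, which is precisely the global nearby quadratic space $V^{\prime}$; hence the index group is $\GSpin(V^{\prime})(\QQ)=I^{\Diamond}(\QQ)$. At $p$ one has $I^{\Diamond}(\QQ_{p})=\GSpin(V^{\prime}_{p})$, and since $V^{\prime}_{p}=V_{p,K_{0}}^{\Phi}$ this is $J_{b^{\Diamond}}(\QQ_{p})$ by Lemma \ref{J-diamond}, giving the embedding $I^{\Diamond}(\QQ)\hookrightarrow J_{b^{\Diamond}}(\QQ_{p})$ through which $I^{\Diamond}(\QQ)$ acts on $\breve{\calM}^{\Diamond}$; at finite $l\neq p$ the identification $V^{\prime}_{l}=V_{l}$ gives $I^{\Diamond}(\mathbb{A}^{p}_{f})\cong G^{\Diamond}(\mathbb{A}^{p}_{f})$, accounting for the action on $G^{\Diamond}(\mathbb{A}^{p})/K^{\Diamond p}$. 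That the resulting map of formal $W_{0}$-schemes is an isomorphism is then checked in the usual way: it is bijective on $\FF$-points by the combinatorial description just given, using also that the basic locus of $\mathscr{S}^{\Diamond}_{K^{\Diamond}}$ equals its supersingular locus $\mathscr{S}^{\Diamond}_{\ss}$, and it is formally étale, indeed an isomorphism on completed local rings, by the identification $\breve{\calM}^{\Diamond}_{x}\xrightarrow{\cong}\widehat{\mathscr{S}}^{\Diamond}_{K^{\Diamond},y}$ already recorded above.

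The step I expect to be the main obstacle is the compatibility assertion in the second paragraph: one must know that the crystalline tensors carried by $\rho$ from $X_{0}$ genuinely recover the de Rham tensors on the Grothendieck--Messing lift and that these agree with the tensors defining $\mathscr{S}^{\Diamond}_{K^{\Diamond}}$ in \cite{MP16}. This rests on the good properties of the integral canonical model---smoothness, the Tate-linearity of the tensors, and the validity of the relevant crystalline comparison---established in \cite{Kisin-abe, MP16}, and is in fact the content of the construction of $\breve{\calM}^{\Diamond}$ in \cite{HP17}; accordingly the theorem itself is already contained in \emph{loc.\ cit.}, and the argument above is a recollection of it.
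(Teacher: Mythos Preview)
Your proposal is correct and in fact more detailed than the paper's own proof, which consists entirely of the citation ``This is proved in \cite[Theorem 7.2.4]{HP17}.'' You have accurately sketched the strategy of Howard--Pappas---restricting the Siegel uniformization along the Hodge embedding, cutting down by the defining tensors, and identifying the index group---and you correctly note at the end that the result is already contained in \emph{loc.\ cit.}
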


\begin{proof}
This is proved in \cite[Theorem 7.2.4]{HP17}.
\end{proof}

Let $\mathscr{S}^{\Diamond}_{ssp}(\FF_{p^{2}})$ be the subset of $\mathscr{S}^{\Diamond}_{ss}(\FF_{p^{2}})$ such that the the Kuga-Satake Abelian variety $(A^{\KS}_{\FF_{p^{2}}}, \lambda^{\KS}_{\FF_{p^{2}}})$ is superspecial.  The above isomorphism induces an bijection 
\begin{equation}
\label{ssp-unif}I^{\Diamond}(\QQ)\backslash \breve{\calM}^{\Diamond}_{\ssp}(\FF_{p^{2}})\times G^{\Diamond}(\mathbb{A}^{p})/ K^{\Diamond p}\xrightarrow{\sim} \mathscr{S}^{\Diamond}_{\ssp}(\FF_{p^{2}}).
\end{equation}

Consider the \'{e}tale covering  $\mathscr{S}^{\Diamond}_{K^{\Diamond}}\rightarrow \mathscr{S}_{K}$ and let $\mathscr{S}_{\ss}$ be the image of $\mathscr{S}^{\Diamond}_{\ss}$ under this covering. We define $\mathscr{S}_{\ssp}$ in a completely similar manner. 

The basic locus of the orthogonal type Shimura variety can be uniformized  by the Rapoport-Zink space $\breve{\calM}$ considered in the previous sections.

\begin{corollary}\label{RZ-orth}
There is an isomorphism of formal $W_{0}$-schemes
$$I(\QQ)\backslash \breve{\calM}\times G(\mathbb{A}^{p})/ K^{p}\xrightarrow{\sim} (\widehat{\mathscr{S}}_{K, W_{0}})/\mathscr{S}_{ss}$$
where $I(\QQ)$ acts on $\breve{\calM}$ via the embedding $I(\QQ)\hookrightarrow J_{b}(\QQ_{p})$. 
The above isomorphism immediately induces a bijection 
\begin{equation*}\label{ssp-unif}I(\QQ)\backslash \breve{\calM}_{\ssp}(\FF_{p^{2}})\times G(\mathbb{A}^{p})/ K^{p}\xrightarrow{\sim} \mathscr{S}_{\ssp}(\FF_{p^{2}}).
\end{equation*} 
\end{corollary}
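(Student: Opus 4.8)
The plan is to deduce this directly from Theorem \ref{RZ-spin} by passing through the quotient maps on both sides of the uniformization isomorphism. Recall from \eqref{spin-to-or} that $\breve{\calM}=\breve{\calM}^{\Diamond}/p^{\ZZ}$, and that the \'etale covering $\mathscr{S}^{\Diamond}_{K^{\Diamond}}\to \mathscr{S}_{K}$ arises from the central isogeny $G^{\Diamond}\to G=\SO(V)$ together with the map on level structures $K^{\Diamond,p}\mapsto K^{p}$; on formal completions along the supersingular locus this descends to a map $\widehat{\mathscr{S}}^{\Diamond}_{K^{\Diamond},W_{0}}/\mathscr{S}^{\Diamond}_{\ss}\to \widehat{\mathscr{S}}_{K,W_{0}}/\mathscr{S}_{\ss}$. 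First I would check that the $G^{\Diamond}(\AAA^{p})$-action and the $p^{\ZZ}$-action on $\breve{\calM}^{\Diamond}\times G^{\Diamond}(\AAA^{p})/K^{\Diamond p}$ are compatible with the $I^{\Diamond}(\QQ)$-action (the center $\GG_{m}\subset G^{\Diamond}$ and the subgroup $p^{\ZZ}\subset \GG_{m}(\QQ_{p})\subset J_{b^{\Diamond}}(\QQ_{p})$ act through translation on the $\breve{\calM}^{\Diamond}$-factor), so that quotienting by the appropriate central subgroup on the source corresponds to the \'etale covering on the target. Then the corollary follows by applying the functor ``quotient by this central subgroup'' to the isomorphism of Theorem \ref{RZ-spin}.

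Concretely, the key steps in order: (1) identify the kernel of $I^{\Diamond}(\QQ)\times G^{\Diamond}(\AAA_f)\to I(\QQ)\times G(\AAA_f)$ and match it with the subgroup acting trivially on the orthogonal-type objects; (2) show that the map $\breve{\calM}^{\Diamond}\times G^{\Diamond}(\AAA^{p})/K^{\Diamond p}\to \breve{\calM}\times G(\AAA^{p})/K^{p}$ obtained by quotienting by $p^{\ZZ}$ on the first factor and by the image of $Z(G^{\Diamond})$ in the adelic factor is surjective with the expected fibers, using that $G^{\Diamond}\to G$ is surjective on $\QQ_p$- and $\AAA^p$-points up to the spinor-norm obstruction, which is absorbed by the sufficiently-small level and the connectedness hypotheses already in force in \cite{HP17, MP16}; (3) check that the two quotient operations are intertwined by $\Theta$, i.e. the square relating $\breve{\calM}^{\Diamond}\to\breve{\calM}$ and $\widehat{\mathscr{S}}^{\Diamond}_{K^{\Diamond}}\to\widehat{\mathscr{S}}_K$ commutes — this is essentially the content of the last paragraph of Section 4 in the excerpt, where $\Theta$ is constructed compatibly on both levels; (4) conclude the isomorphism of formal $W_0$-schemes, and then specialize to $\overline{\FF}_p$-points lying in $\breve{\calM}_{\ssp}$ to obtain the induced bijection, using Definition \ref{ssp-locus} and the fact (Lemma after Definition \ref{ssp-locus}) that $\breve{\calM}_{\ssp}$ is defined over $\FF_{p^2}$, so that the $\FF_{p^2}$-points on the two sides correspond.

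The main obstacle I expect is bookkeeping with the central subgroup $p^{\ZZ}$ and the spinor norm: one must verify that quotienting $\breve{\calM}^{\Diamond}$ by $p^{\ZZ}$ on the left of the double coset is compatible with the $I^{\Diamond}(\QQ)$-action on the left and the $G^{\Diamond}(\AAA^p)/K^{\Diamond p}$-action on the right, i.e. that $p^{\ZZ}\subset I^{\Diamond}(\QQ)\cap J_{b^{\Diamond}}(\QQ_p)$ acts the same way whether one views it as part of the global arithmetic group or as the covering-group of $\breve{\calM}^{\Diamond}\to\breve{\calM}$. This is a standard but slightly delicate compatibility; once it is in place, everything else is formal. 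The final sentence (the induced bijection on $\ssp$-loci) then follows immediately by restricting the formal-scheme isomorphism to the closed subscheme $\mathscr{S}_{\ssp}\subset\mathscr{S}_{\ss}$, whose preimage in $I(\QQ)\backslash\breve{\calM}\times G(\AAA^p)/K^p$ is $I(\QQ)\backslash\breve{\calM}_{\ssp}\times G(\AAA^p)/K^p$ by the definition of $\mathscr{S}_{\ssp}$ as the image of $\mathscr{S}^{\Diamond}_{\ssp}$ under the \'etale covering together with \eqref{ssp-unif}.
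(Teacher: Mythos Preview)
Your approach is correct and is precisely what the paper has in mind: the paper gives no separate proof for this corollary, treating it as an immediate consequence of Theorem~\ref{RZ-spin} together with the quotient relations $\breve{\calM}=\breve{\calM}^{\Diamond}/p^{\ZZ}$ from \eqref{spin-to-or} and the \'etale covering $\mathscr{S}^{\Diamond}_{K^{\Diamond}}\to\mathscr{S}_{K}$. Your steps (1)--(4) simply make explicit the bookkeeping the paper leaves to the reader, and your identification of the $p^{\ZZ}$/spinor-norm compatibility as the only nontrivial point is accurate.
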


Combining Theorem \ref{thmZ}, we obtain a double coset uniformization of the superspecial locus of the orthogonal type Shimura variety $\mathscr{S}_{\ssp}(\FF_{p^{2}})$. For this, let $U_{0,p}= \SO(\Lambda, \pm)\subset I(\QQ_{p})$ and $U^{p}=K^{p}\subset I(\mathbb{A}^{p})$. 

\begin{theorem}\label{ssp-shim}
There is a double coset uniformization of the superspecial locus of the orthogonal type Shimura variety given by
$$ \mathscr{S}_{\ssp}(\FF_{p^{2}})\xrightarrow{\sim}I(\QQ)\backslash I(\mathbb{A}_{f}) /U_{0,p}U^{p}$$
\end{theorem}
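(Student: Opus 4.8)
The plan is to combine the Rapoport--Zink uniformization of the supersingular locus with the local parametrization of the superspecial locus by the homogeneous space $Z$ obtained in Theorem~\ref{thmZ}. The starting point is the bijection of Corollary~\ref{RZ-orth},
\begin{equation*}
I(\QQ)\backslash \breve{\calM}_{\ssp}(\FF_{p^{2}})\times G(\mathbb{A}^{p})/ K^{p}\xrightarrow{\sim} \mathscr{S}_{\ssp}(\FF_{p^{2}}),
\end{equation*}
which reduces the problem to understanding the left-hand side. First I would substitute Theorem~\ref{thmZ}, which identifies $\breve{\calM}_{\ssp}(\FF_{p^{2}})$ with the coset space $Z = \SO(V^{\Phi}_{K_{0}})/\SO(\Lambda_{0},\pm) = J_{b}(\QQ_{p})/U_{0,p}$, where $U_{0,p}=\SO(\Lambda_{0},\pm)$ is the stabilizer of a fixed type~$2$ vertex lattice together with an orientation of $\Lambda_{0}/\Lambda_{0}^{\vee}$. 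This identification is $J_{b}(\QQ_{p})$-equivariant by construction, so it is compatible with the action by which $I(\QQ)\hookrightarrow J_{b}(\QQ_{p})=\SO(V^{\Phi}_{K_{0}})$ acts on the first factor.

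Next I would feed this into the quotient. Replacing $\breve{\calM}_{\ssp}(\FF_{p^{2}})$ by $J_{b}(\QQ_{p})/U_{0,p}$ and using that $G(\mathbb{A}^{p})=I(\mathbb{A}^{p})$ (the groups $G$ and $I$ agree away from $p$ and $\infty$, being both equal to $\SO$ of the relevant quadratic space, which localizes the same at all finite $l\neq p$), the left-hand side becomes
\begin{equation*}
I(\QQ)\backslash\bigl( I(\QQ_{p})/U_{0,p}\times I(\mathbb{A}^{p})/U^{p}\bigr)
= I(\QQ)\backslash I(\mathbb{A}_{f})/U_{0,p}U^{p},
\end{equation*}
where in the last step one uses that $I(\mathbb{A}_{f})=I(\QQ_{p})\times I(\mathbb{A}^{p})$ and that $U_{0,p}U^{p}$ is the open compact subgroup with $p$-component $U_{0,p}$ and prime-to-$p$ component $U^{p}=K^{p}$. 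This is a purely formal manipulation of double cosets once the equivariant local identification is in hand. Composing all the bijections gives the claimed map $\mathscr{S}_{\ssp}(\FF_{p^{2}})\xrightarrow{\sim} I(\QQ)\backslash I(\mathbb{A}_{f})/U_{0,p}U^{p}$.

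The main point requiring care — though it is not really an obstacle, since it is supplied by Theorem~\ref{thmZ} and its proof — is the equivariance of the local identification $\breve{\calM}_{\ssp}(\FF_{p^{2}})\cong J_{b}(\QQ_{p})/U_{0,p}$ for the $J_{b}(\QQ_{p})$-action, in particular that $J_{b}(\QQ_{p})=\SO(V^{\Phi}_{K_{0}})$ acts transitively on $\breve{\calM}_{\ssp}(\FF_{p^{2}})$ with stabilizer exactly $U_{0,p}=\SO(\Lambda_{0},\pm)$; this is where the argument that an element of $\SO(V^{\Phi}_{K_{0}})$ lifting to $\GSpin$ with odd spinor norm valuation swaps the two points $x_{\circ},x_{\bullet}$ of a type~$2$ stratum enters. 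One should also note that the descent from the $\GSpin$-level uniformization of Theorem~\ref{RZ-spin} to the orthogonal level in Corollary~\ref{RZ-orth} is compatible with passing from $J_{b^{\Diamond}}(\QQ_{p})=\GSpin(V^{\Phi}_{K_{0}})$ to $J_{b}(\QQ_{p})=\SO(V^{\Phi}_{K_{0}})$, so that the double coset on the $\SO$-side is the one displayed. Granting these compatibilities, the theorem follows by stringing together the bijections above.
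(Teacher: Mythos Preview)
Your proposal is correct and follows essentially the same route as the paper's own proof: start from the bijection of Corollary~\ref{RZ-orth}, plug in the identification $\breve{\calM}_{\ssp}\cong Z=J_{b}(\QQ_{p})/U_{0,p}$ from Theorem~\ref{thmZ}, and then use $G(\mathbb{A}^{p})\cong I(\mathbb{A}^{p})$ together with $I(\QQ_{p})=\SO(V^{\Phi}_{K_{0}})$ to rewrite the double coset. Your write-up is in fact more explicit than the paper's about the $J_{b}(\QQ_{p})$-equivariance needed to pass to the quotient, but the underlying argument is the same.
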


\begin{proof}
We have an isomorphism  $\mathscr{S}_{\ssp}(\FF)\xrightarrow{\sim}I(\QQ)\backslash Z\times G(\mathbb{A}^{p})/ K^{p}$ using \eqref{ssp-unif} and Theorem \ref{thmZ}. Since there is an isomorphism $G(\mathbb{A}^{p})\cong I(\mathbb{A}^{p})$ and $I(\QQ_{p})=\SO({V^{\Phi}_{K_{0}}})$, the result follows.
\end{proof}

Let  $\mathscr{S}^{\ssp}(\mathcal{O}_{K})$ be the subset of $\widehat{\mathscr{S}}_{K, W_{0}}/\mathscr{S}_{\ss}(\mathcal{O}_{K})$ of whose reduction modulo $p$ lands in $\mathscr{S}_{\ssp}(\FF_{p^{2}})$.  Then Theorem \ref{RZ-orth} induces a bijection
\begin{equation}\label{ssp-lift-unif}I(\QQ)\backslash Def_{\ssp}(\mathcal{O}_{K}) \times G(\mathbb{A}^{p})/ K^{p}\xrightarrow{\sim} \mathscr{S}^{\ssp}(\mathcal{O}_{K}) .\end{equation}

\begin{theorem}\label{ssp-shim-lift}
There is a double coset description of $\mathscr{S}^{\ssp}(\mathcal{O}_{K})$ given by
$$\mathscr{S}^{\ssp}(\mathcal{O}_{K})\xrightarrow{\sim}I(\QQ)\backslash I(\mathbb{A}_{f}) /U_{1,p}U^{p}$$ where $U_{1, p}=\SO(W,\iota)\times \SO(M)$ 
\end{theorem}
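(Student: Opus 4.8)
The plan is to mirror the proof of Theorem \ref{ssp-shim} exactly, now feeding in the second part of Theorem \ref{main-RZ} (equivalently Corollary \ref{corY}) in place of part (1). First I would start from the bijection \eqref{ssp-lift-unif}, which is the specialization to the lifted superspecial locus of the Rapoport-Zink uniformization of Corollary \ref{RZ-orth}:
$$I(\QQ)\backslash Def_{\ssp}(\mathcal{O}_{K}) \times G(\mathbb{A}^{p})/ K^{p}\xrightarrow{\sim} \mathscr{S}^{\ssp}(\mathcal{O}_{K}).$$
The next step is to substitute the description of $Def_{\ssp}(\mathcal{O}_{K})$ as the homogeneous space $Y$ furnished by Corollary \ref{corY}, namely $Def_{\ssp}(\mathcal{O}_{K}) \cong \SO(V^{\Phi}_{K_{0}})/\SO(W,\iota)\times \SO(M)$, so that the left side becomes
$$I(\QQ)\backslash \big(\SO(V^{\Phi}_{K_{0}})/(\SO(W,\iota)\times \SO(M))\big)\times G(\mathbb{A}^{p})/ K^{p}.$$
Here one must be careful that the identification of Corollary \ref{corY} is $\SO(V^{\Phi}_{K_{0}})$-equivariant, and that the action of $I(\QQ)$ on $Def_{\ssp}(\mathcal{O}_{K})$ factors through the embedding $I(\QQ)\hookrightarrow J_b(\QQ_p)=\SO(V^{\Phi}_{K_0})$ used in Corollary \ref{RZ-orth}; this compatibility is exactly what makes the substitution legitimate.

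Second, I would reorganize the resulting double-coset space at the finite adelic level. Using $J_b(\QQ_p) = \SO(V^{\Phi}_{K_0}) = I(\QQ_p)$ together with the isomorphism $G(\mathbb{A}^p)\cong I(\mathbb{A}^p)$ (the two orthogonal groups agree away from $p$ and $\infty$ by the very construction of $V'$ in Section 5.1), the product $\big(\SO(V^{\Phi}_{K_0})/U_{1,p}\big)\times G(\mathbb{A}^p)/K^p$ is identified with $I(\mathbb{A}_f)/U_{1,p}U^p$, where $U_{1,p} = \SO(W,\iota)\times\SO(M)$ is precisely the stabilizer in $I(\QQ_p)$ of the pair $(W,M)$ (equivalently of the base point of $Y$), and $U^p = K^p$. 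Quotienting on the left by $I(\QQ)$ then yields the desired
$$\mathscr{S}^{\ssp}(\mathcal{O}_{K})\xrightarrow{\sim}I(\QQ)\backslash I(\mathbb{A}_{f}) /U_{1,p}U^{p}.$$
This is purely formal bookkeeping of double cosets, entirely parallel to Theorem \ref{ssp-shim}.

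The main obstacle, such as it is, is not in the double-coset manipulation — that is routine — but in checking that the ``lifted superspecial locus'' $\mathscr{S}^{\ssp}(\mathcal{O}_{K})$ as defined globally (points of $\widehat{\mathscr{S}}_{K,W_0}/\mathscr{S}_{\ss}(\mathcal{O}_K)$ reducing into $\mathscr{S}_{\ssp}(\FF_{p^2})$) matches, place by place and compatibly with the uniformization, the local object $Def_{\ssp}(\mathcal{O}_K)$ that Corollary \ref{corY} describes. Concretely one needs the completed-local-ring isomorphism $\breve{\calM}_x \xrightarrow{\cong} \widehat{\mathscr{S}}_{K,y}$ (recorded in Section 5.1) to be $\mathcal{O}_K$-linear and to carry $\mathcal{O}_K$-points to $\mathcal{O}_K$-points, so that taking $\mathcal{O}_K$-valued points commutes with the uniformization; this is where Theorem \ref{lift} and its corollary, via Grothendieck--Messing theory, do the work. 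Once that compatibility is in hand the proof is a one-line substitution, so I would state it as such: apply \eqref{ssp-lift-unif}, insert Corollary \ref{corY}, and rearrange exactly as in the proof of Theorem \ref{ssp-shim}. \hfill$\square$
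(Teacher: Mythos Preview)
Your proposal is correct and follows essentially the same approach as the paper's own proof: start from the uniformization bijection \eqref{ssp-lift-unif}, substitute the description $Def_{\ssp}(\mathcal{O}_K)\cong Y$ from Corollary \ref{corY}, and then use $I(\QQ_p)=\SO(V^{\Phi}_{K_0})$ together with $G(\mathbb{A}^p)\cong I(\mathbb{A}^p)$ to rewrite the resulting expression as the desired double coset. Your additional remarks on equivariance and the compatibility of $\mathcal{O}_K$-points under the completed-local-ring isomorphism are helpful elaborations but are not needed beyond what the paper already takes for granted.
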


\begin{proof}
We have an isomorphism  $\mathscr{S}^{ssp}(\mathcal{O}_{K})\xrightarrow{\sim}I(\QQ)\backslash Y\times G(\mathbb{A}^{p})/ K^{p}$ using \eqref{ssp-lift-unif} and Corollary \ref{corY}. Since there is an isomorphism $G(\mathbb{A}^{p})\cong I(\mathbb{A}^{p})$ and $I(\QQ_{p})=\SO(V^{\Phi}_{p, K_{0}})$, the result follows.
\end{proof}

\subsection{Geometric Mass formula}\label{mass-for}
 Finally, we deduce some quantitive results about the size of the double coset in Theorem \ref{ssp-shim}. This is usually refereed to as the Mass formula in the literature. See \cite{Yu11} for a treatment in the PEL type case. Our modest goal of this short subsection is to provide an analogue of the simple Mass formula in the setting of orthogonal type Shimura varieties. 

Let $x\in \mathscr{S}_{\ssp}$ be a superspecial point and let $g_{x}\in I(\QQ)\backslash I(\mathbb{A}_{f}) /U_{0,p}U^{p}$ be the corresponding class and we choose a representative in $I(\mathbb{A}_{f}) $ denoted by the same symbol. Let $U=U_{0,p}U^{p}$ we define the finite group $\Gamma_{x}$ by the following formula
$$g_{x}Ug^{-1}_{x}\cap I(\QQ)= \Gamma_{x}.$$

We define the arithmetic mass $\Mass^{a}_{\mathscr{S}_{\ssp}}$ for the superspecial locus $\mathscr{S}_{\ssp}$ using the formula
$$\Mass^{a}_{\mathscr{S}_{\ssp}}=\sum_{x\in \mathscr{S}_{\ssp}}\frac{1}{|\Gamma_{x} |}$$
where $|\Gamma_{x}|$ is the size of the group $\Gamma_{x}$. 

Next we give a geometric interpretation of the group $ \Gamma_{g_{x}}$. For this, let $\tilde{x}\in \mathscr{S}^{\Diamond}_{\ssp}$ be a lift of $x$ and let $A_{\tilde{x}}$ be the  abelian variety corresponding to $\tilde{x}\in {\mathscr{S}^{\Diamond}_{ssp}}$. It is equipped with a polarization $\lambda_{\tilde{x}}$ and a prime to $p$ level structure $\epsilon^{p}_{\tilde{x}}$. We will write $\underline{A}_{\tilde{x}}=(A_{\tilde{x}}, \lambda_{\tilde{x}}, \epsilon^{p}_{\tilde{x}})$ for this datum. Let  $\Isom(\underline{A}_{\tilde{x}}, \underline{A}_{\tilde{x}})_{\QQ}$ be the self-quasi-isogenies of $A_{\tilde{x}}$ that preserve the additional structures. Recall there are quadratic spaces $\mathbf{V}_{\tilde{x},\QQ}^{p}\subset \End(V^{p}(A_{\tilde{x}}))$ and $\mathbf{V}_{\cris,\tilde{x}, \QQ}\subset \End(\mathbf{H}_{\cris,\tilde{x}, \QQ})$ defined in \eqref{special-p}, \eqref{special-l} with $\mathbf{V}_{\tilde{x},\QQ}^{p}=\prod_{l\neq p}\mathbf{V}_{l,\tilde{x},\QQ}$ . Define the group $\Isom_{\mathbf{V}}(\underline{A}_{\tilde{x}}, \underline{A}_{\tilde{x}})$ to be the largest closed subgroup of $\Isom(\underline{A}_{\tilde{x}}, \underline{A}_{\tilde{x}})_{\QQ}$ such that its image in $ \End(V^{p}(A_{\tilde{x}}))$ lands in $\GSpin(\mathbf{V}^{p}_{\tilde{x}, \QQ})$ and its image in $\End(\mathbf{H}_{\cris,\tilde{x}, \QQ})$ lands in $\GSpin(\mathbf{V}_{\cris,\tilde{x}, \QQ})$. The group  $\Isom_{\mathbf{V}}(\underline{A}_{\tilde{x}}, \underline{A}_{\tilde{x}})$ is a subgroup of $\GSpin(V^{\prime})$ by \cite[Theorem 6.4]{MP15}. We define the group $\Isom_{\mathbf{V}}(x)$ to be the image of $\Isom_{\mathbf{V}}(\underline{A}_{\tilde{x}}, \underline{A}_{\tilde{x}})$ in $\SO(V^{\prime})$. Using this we define the geometric mass  of $\mathscr{S}_{\ssp}$ to be $$\Mass^{g}_{\mathscr{S}_{\ssp}}=\sum_{x\in \mathscr{S}_{\ssp}}\frac{1}{|\Isom_{\mathbf{V}}(x)|}.$$

\begin{proposition}
The group $\Gamma_{x}$ is  isomorphic to $\Isom_{\mathbf{V}}(x)$ and therefore $$\Mass^{g}_{\mathscr{S}_{\ssp}}=\Mass^{a}_{\mathscr{S}_{\ssp}}.$$
\end{proposition}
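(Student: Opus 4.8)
The plan is to realize both $\Gamma_x$ and $\Isom_{\mathbf V}(x)$ as the stabilizer, inside $I(\QQ)$, of one and the same point of $Z\times G(\mathbb{A}^p)/K^p$, after which the mass identity follows by summing reciprocals of orders over the finite set $\mathscr{S}_{\ssp}$. First I would make the double coset description of Theorem~\ref{ssp-shim} explicit on points. Combining Corollary~\ref{RZ-orth}, Theorem~\ref{thmZ} and the identifications $I(\QQ_p)=\SO(V^{\Phi}_{K_0})=J_b(\QQ_p)$ and $I(\mathbb{A}^p)\cong G(\mathbb{A}^p)$, the uniformization identifies $\mathscr{S}_{\ssp}(\FF_{p^2})$ with $I(\QQ)\backslash\bigl(Z\times G(\mathbb{A}^p)/K^p\bigr)$, where $Z=I(\QQ_p)/U_{0,p}$ and, by Theorem~\ref{thmZ}, $U_{0,p}=\SO(\Lambda_0,\pm)$ is the stabilizer in $I(\QQ_p)$ of the base point $z_0\in Z$. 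Writing $g_x=(g_{x,p},g_x^p)\in I(\mathbb{A}_f)$ for a representative of the class of $x$, the point $x$ corresponds to the $I(\QQ)$-orbit of $(g_{x,p}z_0,g_x^p)$, whose stabilizer in $I(\QQ)$ is exactly
\[
\{\gamma\in I(\QQ):g_x^{-1}\gamma g_x\in U_{0,p}U^p\}=g_xUg_x^{-1}\cap I(\QQ)=\Gamma_x.
\]
Since $V'_{\RR}$ has signature $(n,0)$, the group $I(\RR)=\SO(n)$ is compact, so $I(\QQ)$ is discrete in $I(\mathbb{A}_f)$; hence $\Gamma_x$ is finite, $\mathscr{S}_{\ssp}$ is a finite set, and $\Mass^a_{\mathscr{S}_{\ssp}}$ is well defined.

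Next I would treat the geometric side. Fix a lift $\tilde x\in\mathscr{S}^{\Diamond}_{\ssp}(\FF_{p^2})$ of $x$ along the \'etale cover $\mathscr{S}^{\Diamond}_{K^{\Diamond}}\to\mathscr{S}_K$ and use the $\GSpin$-level uniformization of Theorem~\ref{RZ-spin}. Its source parametrizes Kuga--Satake abelian varieties quasi-isogenous to the base point $A_{x_0}$, together with polarization, prime-to-$p$ level, and the crystalline and $\ell$-adic tensor families, and the acting group $I^{\Diamond}(\QQ)=\GSpin(V')(\QQ)$ is, by construction, the group of self-quasi-isogenies of $\underline A_{x_0}$ whose images in $\End(V^p(A_{x_0}))$ and $\End(\mathbf H_{\cris,x_0,\QQ})$ land in $\GSpin(\mathbf V^p)$ and $\GSpin(\mathbf V_{\cris})$; this is precisely the content of \cite[Theorem~6.4]{MP15} invoked in Theorem~\ref{RZ-spin}. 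Transporting $\tilde x$ to its uniformizing datum $(\tilde g_{x,p}\tilde z_0,\tilde g_x^p)\in\breve{\calM}^{\Diamond}_{\ssp}(\FF_{p^2})\times G^{\Diamond}(\mathbb{A}^p)/K^{\Diamond p}$, a self-quasi-isogeny of $\underline A_{\tilde x}$ respecting the tensor families is the same as an element of $I^{\Diamond}(\QQ)$ fixing this datum; thus $\Isom_{\mathbf V}(\underline A_{\tilde x})=\Stab_{I^{\Diamond}(\QQ)}(\tilde g_{x,p}\tilde z_0,\tilde g_x^p)$.

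Finally I would descend along $I^{\Diamond}\to I$, that is $\GSpin(V')\to\SO(V')$, which has kernel $\mathbb{G}_m$. Using $\breve{\calM}=\breve{\calM}^{\Diamond}/p^{\ZZ}$ (equation~\eqref{spin-to-or}) and the compatibility of the $\GSpin$- and $\SO$-uniformizations, the projection $I^{\Diamond}(\QQ)\to I(\QQ)$ carries $\Stab_{I^{\Diamond}(\QQ)}(\tilde g_{x,p}\tilde z_0,\tilde g_x^p)$ onto $\Gamma_x=\Stab_{I(\QQ)}(g_{x,p}z_0,g_x^p)$: it is surjective on $\QQ$-points by Hilbert~90, and a lift of $\gamma\in\Gamma_x$ can be corrected by a suitable power of $p\in\mathbb{G}_m(\QQ)$ (which acts through $p^{\ZZ}$ at $p$ and trivially away from $p$) so as to fix the datum on the nose. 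Its kernel is $\{\pm1\}$, the only central quasi-isogenies preserving the polarization $\lambda_{\tilde x}$ itself rather than merely its scaling class. Since $\Isom_{\mathbf V}(x)$ is by definition the image of $\Isom_{\mathbf V}(\underline A_{\tilde x})$ in $\SO(V')$, this gives $\Isom_{\mathbf V}(x)=\Gamma_x$ inside $I(\QQ)$; summing $|\Gamma_x|^{-1}=|\Isom_{\mathbf V}(x)|^{-1}$ over the finite set $\mathscr{S}_{\ssp}$ yields $\Mass^g_{\mathscr{S}_{\ssp}}=\Mass^a_{\mathscr{S}_{\ssp}}$.

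The hard part is the identification in the second step: that the geometrically defined group $\Isom_{\mathbf V}(\underline A_{\tilde x})$ --- self-quasi-isogenies preserving polarization, level and the Hodge tensors --- coincides with the stabilizer of the Rapoport--Zink uniformizing datum. This is exactly where the rigidity provided by \cite{HP17} and \cite{MP15} enters, and it must be combined with careful bookkeeping for the passage between $\GSpin$ and $\SO$ (the central $\mathbb{G}_m$, spinor norms and the $p^{\ZZ}$-action) used in the third step.
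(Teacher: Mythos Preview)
Your proof is correct and relies on the same key input as the paper, but organizes the argument differently. The paper compares the local images of $\Isom_{\mathbf V}(\underline A_{\tilde x})$ and of (the relevant subgroup of) $I^{\Diamond}(\QQ)$ place by place---invoking \cite[Theorem~6.4]{MP15} at $p$ to match the crystalline side, and the compatibility of $\ell$-adic realizations away from $p$---and then concludes by the Hasse principle. You instead realize both groups as stabilizers of one and the same point under the Rapoport--Zink uniformization (Corollary~\ref{RZ-orth} and Theorem~\ref{RZ-spin}), and pass between the $\GSpin$ and $\SO$ levels via Hilbert~90 and the $p^{\ZZ}$-quotient \eqref{spin-to-or}. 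Your route is more structural and sidesteps the explicit local-global step; the paper's route makes the place-by-place comparison explicit. In both cases the substantive content is the same: the identification of $I^{\Diamond}(\QQ)$ with the group of tensor-preserving self-quasi-isogenies of $\underline A_{\tilde x}$, which is precisely what \cite[Theorem~6.4]{MP15} and the construction in \cite{HP17} supply. One small point worth recording in your write-up: the surjectivity of $\Stab_{I^{\Diamond}(\QQ)}\to\Gamma_x$ requires not just Hilbert~90 but also that the spinor norm of any lift of $\gamma\in\Gamma_x$ has even $l$-adic valuation for all $l\neq p$ (so that a single global central correction works); this follows from $g_l^{-1}\gamma g_l\in\SO(L_l)(\ZZ_l)$, whose spinor norm lies in $\ZZ_l^{\times}/(\ZZ_l^{\times})^2$.
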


\begin{proof}
Notice that $I^{\Diamond}(\QQ)$ is the subgroup of $\Isom(\underline{A}_{\tilde{x}}, \underline{A}_{\tilde{x}})_{\QQ}$ in the isogeny category of $(A_{\tilde{x}},\lambda_{\tilde{x}})$ that preserve the Hodge cycles on $A_{\tilde{x}}$.  Then by definition its image in $ \End(V^{p}(A_{\tilde{x}}))$ lands in $\GSpin(V(\mathbb{A}^{p}_{f}))$ and its image in $\End(\mathbf{H}_{\cris, \tilde{x}, \QQ})$ is $\GSpin(V^{\prime}_{p})$. Since the abelian variety $A_{\tilde{x}}$ is supersingular, it follows from \cite[Theorem 6.4]{MP15} that the image of $\Isom_{\mathbf{V}}(\underline{A}_{\tilde{x}}, \underline{A}_{\tilde{x}})$ in $\GSpin(\mathbf{V}_{\cris, \tilde{x}, \QQ})$ agrees with  $\GSpin(V^{\prime}_{p})$. It is also clear that the image of $\Isom_{\mathbf{V}}(\underline{A}_{\tilde{x}}, \underline{A}_{\tilde{x}})$ in $\GSpin(\mathbf{V}_{l, \tilde{x}, \QQ})$ agrees that of $I^{\Diamond}(\QQ)$ in $\GSpin(\mathbf{V}_{l, \tilde{x}, \QQ}))$ for all $l\neq p$. Then we can conclude that $\Gamma_{x}$ agrees with $\Isom_{\mathbf{V}}(x)$ essentially by Hasse priniciple. 
\end{proof}

\begin{corollary}
Let $\Vol(U)=[I(\widehat{\ZZ}): U]$.
$$\Mass^{g}_{\mathscr{S}_{\ssp}} = \begin{cases}
\Vol(U)\prod^{m}_{r=1}\zeta(1-2r)\frac{1}{2^{m-1}}\frac{p^{2m}-1}{2(p+1)} &\text{if  $n=2m+1$};\\
\Vol(U)\prod^{m}_{r=1}\zeta(1-2r)L(1-m,\chi)\frac{1}{2^{m-1}}\frac{(p^{m-1}+1)(p^{m+1})}{2(p+1)}  &\text{if $n=2m$}.
\end{cases}$$
\end{corollary}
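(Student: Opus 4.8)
The plan is to reduce the statement to a classical computation of the mass of an open compact subgroup of an anisotropic special orthogonal group, and then to evaluate the resulting product of local volumes.

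First I would combine the two facts already in hand. By the preceding Proposition, $\Mass^g_{\mathscr{S}_{\ssp}}=\Mass^a_{\mathscr{S}_{\ssp}}$, and by Theorem \ref{ssp-shim} the set $\mathscr{S}_{\ssp}$ is identified with the finite double coset space $I(\QQ)\backslash I(\mathbb{A}_f)/U$, with $U=U_{0,p}U^p$, in such a way that $\Gamma_x=g_xUg_x^{-1}\cap I(\QQ)$. Hence $\Mass^g_{\mathscr{S}_{\ssp}}=\sum_{[g]}1/|g Ug^{-1}\cap I(\QQ)|$ is exactly the Smith--Minkowski--Siegel mass of $U$ inside $I=\SO(V')$. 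Since $V'$ is positive definite over $\RR$, the group $I(\RR)=\SO(n)$ is compact, so the mass is finite and is governed by the Tamagawa measure: fixing a factorization $\mu^{\mathrm{Tam}}=\mu_\infty\otimes\bigotimes_\ell\mu_\ell$ of the Tamagawa measure on $I(\mathbb{A})$, one has
\[
\Mass(U)=\frac{\tau(I)}{\mu_\infty(I(\RR))}\prod_{\ell}\mu_\ell(U_\ell)^{-1},\qquad \tau(\SO_n)=2.
\]
Away from $p$ the group $I$ is unramified and $U_\ell$ is hyperspecial for almost all $\ell$; the finitely many ramified and level places are packaged into $\Vol(U)=[I(\widehat{\ZZ}):U]$.

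Next I would evaluate the local factors. The infinite product $\mu_\infty(\SO(n))^{-1}\prod_{\ell\neq p}\mu_\ell(U_\ell^{\mathrm{hs}})^{-1}$, together with the archimedean $\Gamma$-factors, can be summed using Gross's motive-of-a-reductive-group formalism, equivalently the classical evaluation of Siegel local densities: the motive of $\SO_{2m+1}$ is $\bigoplus_{r=1}^m\QQ(1-2r)$, while that of $\SO_{2m}$ is $\bigoplus_{r=1}^{m-1}\QQ(1-2r)$ twisted by the Artin motive of the quadratic character $\chi$ attached to $\det(V')$; so the product collapses to $\prod_{r=1}^m\zeta(1-2r)$ (respectively $\prod_r\zeta(1-2r)\cdot L(1-m,\chi)$), and the accumulated factors of $\tfrac12$ coming from the functional equation and the Bernoulli-number normalization produce the $\tfrac{1}{2^{m-1}}$. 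It then remains to compute $\mu_p(\SO(\Lambda,\pm))^{-1}$ for the \emph{non-hyperspecial} parahoric $U_p=\SO(\Lambda,\pm)$: since $\epsilon(V'_p)=-1$ there is no self-dual lattice, $\Lambda$ is almost self-dual with a chosen orientation, and the reductive quotient over $\FF_p$ of the corresponding parahoric group scheme is the connected stabilizer of the anisotropic plane $\Lambda/\Lambda^\vee$, i.e. essentially $\SO_2^-\times\SO_{n-2}$. Feeding this into Bruhat--Tits theory (Prasad's volume formula) gives the factor $\tfrac{p^{2m}-1}{2(p+1)}$ when $n=2m+1$ and the stated rational function of $p$ when $n=2m$.

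Finally I would assemble the pieces, separating the parities $n=2m+1$ and $n=2m$, to recover the two cases of the formula. The main obstacle is the constant bookkeeping: one must pin down the normalization of the Tamagawa-measure factorization against the volume of the compact group $\SO(n)$ and its $\Gamma$-factors, and --- most delicately --- carry out the exact $p$-adic volume computation for this non-quasi-split orthogonal group at a non-hyperspecial parahoric, checking that in the even case the relevant character is the discriminant character producing $L(1-m,\chi)$ and that the $p$-polynomial emerges precisely as stated. Everything else is a routine exercise assembling standard local-density evaluations.
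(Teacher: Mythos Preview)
Your proposal is correct and follows essentially the same route as the paper: reduce $\Mass^{g}_{\mathscr{S}_{\ssp}}=\Mass^{a}_{\mathscr{S}_{\ssp}}$ to the mass of the open compact $U$ in the anisotropic group $I=\SO(V')$, peel off $\Vol(U)$, and evaluate the remaining mass of a maximal lattice. The only difference is cosmetic: the paper black-boxes the entire evaluation by citing Gan--Hanke--Yu \cite[Proposition~7.4,~7.5]{GHY01} (Shimura's exact mass formula) for the case of unit determinant and Hasse--Witt invariant $-1$, whereas you unpack the ingredients---Tamagawa number $2$, Gross's motive of $\SO_n$, and the Prasad/Bruhat--Tits volume of the non-hyperspecial parahoric at $p$---that underlie that reference.
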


\begin{proof}
This quantity can be calculated explicitly using the following Shimura's Mass formula see \cite{GHY01}. We give some details of this formula below. Notice that $$\Mass^{a}_{\mathscr{S}_{\ssp}}=\sum_{x\in \mathscr{S}_{\ssp}}\frac{1}{|\Gamma_{x} |}=\Vol(U)\Mass(\Lambda)$$ for a suitable maximal lattice $\Lambda$ in $V^{\prime}$. Then the formula in this corollary follows from \cite[Proposition 7.4, Proposition 7.5]{GHY01}. Note that here we applying the formula in \cite[Proposition 7.4, Proposition 7.5]{GHY01} in the case when the quadratic space has unit determinant and Hasse-Witt invariant $-1$. 
\end{proof}

\end{document}